\documentclass[11pt]{amsart}
\usepackage{amssymb,mathrsfs}
\usepackage[no-math]{fontspec}
\usepackage[UTF8,scheme=plain,fontset=fandol]{ctex}
\usepackage{indentfirst,latexsym,enumerate}
\usepackage{colortbl}
\usepackage{epsfig,subfigure,graphicx}
\usepackage{amsfonts,amsmath,amscd,amsthm}
\usepackage{enumitem,needspace,placeins,bigints}
\usepackage[all]{xy}
\usepackage[noadjust]{cite}
\usepackage{amsaddr}
\usepackage{mathtools,booktabs,array,longtable}
\usepackage{microtype}
\usepackage[hidelinks,unicode]{hyperref}
\usepackage{bookmark}

\numberwithin{equation}{section}
\allowdisplaybreaks
\theoremstyle{plain}
\newtheorem{theorem}{Theorem}[section]
\newtheorem{lemma}[theorem]{Lemma}
\newtheorem{corollary}[theorem]{Corollary}
\newtheorem{proposition}[theorem]{Proposition}
\theoremstyle{definition}

\theoremstyle{remark}
\newtheorem{remark}[theorem]{Remark}
\newtheorem*{remark*}{Remark}

\newcommand{\R}{\mathbb R}

\newcommand{\U}{\mathcal U}
\newcommand{\cU}{\mathcal U}

\newcommand{\cH}{\mathcal H}
\newcommand{\dd}{\,d}

\newcommand{\sgn}{\operatorname{sgn}}

\setlist{itemsep=2pt,topsep=4pt}
\title[Reflection symmetry in the $1+N$ coorbital problem]{Reflection symmetry in the equal-mass
$1+N$ coorbital problem for $5\leq N\leq8$}
\author{Xiaoxiang Huangfu,  Zijian Meng}
\address{\vspace*{-0.1cm}\upshape
\begin{tabular}{@{}l}
School of Mathematics, Shandong University, Jinan 250100, China.
\end{tabular}}
\keywords{coorbital problem, central configurations, reflection symmetry,
computer-assisted proof}
\date{}

\begin{document}
\begin{abstract}
We study reflection symmetry in the equal-mass Newtonian \(1+N\) coorbital problem for \(N=5,6,7,8\). We prove that every collision-free solution of the angular central-configuration equations has a reflection axis. As a consequence, every non-coalescent limit of planar central configurations consisting of one dominant mass and \(N\) equal small masses is reflection-symmetric about a line through the dominant body.
Our proof combines analytic arguments with rigorous computer-assisted verification. For each possible reflection, we introduce variables that measure the deviation from that reflection and vanish exactly when the configuration has that symmetry. The central-configuration equations yield a symmetric matrix equation in these variables. Concavity of the force derivative and estimates of the resulting quadratic forms reduce the symmetry problem to finitely many rigorously verified inequalities. Although the details depend on \(N\), the same general method applies in all four cases. We also obtain, as a separate result, the complete three-class classification for \(N=5\), up to the natural equivalences.
\end{abstract}
\maketitle
\begingroup
\small
\tableofcontents
\endgroup
\medskip
\section{Introduction}\label{sec:intro}
The planar $1+N$ coorbital problem arises as a limiting central-configuration
problem with one dominant body and $N$ small satellites. More precisely, we
consider masses
\[
1,\epsilon,\ldots,\epsilon
\]
and let $\epsilon\to0$. For a non-coalescent normalized limit, the satellites
lie on a common circle centered at the dominant body, and the
central-configuration equations reduce to a system of angular balance
equations on the circle.

A natural question is whether every solution of these angular equations must
possess a reflection symmetry. Equal satellite masses make the equations
invariant under relabeling and reflection, but this invariance alone does not
imply that an individual solution is symmetric. The purpose of this paper is
to prove that, for $N=5,6,7,8$, every collision-free angular solution does in
fact have a reflection axis.

This question is part of Problem~14 of Albouy, Cabral, and
Santos~\cite[Problem~14]{ACS}. In the non-coalescent planar $1+N$ limit, they
asked whether every configuration must be symmetric when $N\leq8$, and
whether the satellites must form a regular polygon when $N\geq9$. The present
paper concerns the first question for $N=5,6,7,8$.

Our main result is the following.

\begin{theorem}[Reflection symmetry; computer-assisted]\label{thm:main}
Let $N\in\{5,6,7,8\}$, and let
$\theta_0,\ldots,\theta_{N-1}$ be pairwise distinct angular positions modulo one full turn
satisfying the equal-mass Newtonian coorbital angular equations
\eqref{eq:cc}. Then the set of satellite positions is invariant under
reflection about a line through the center of the circle.
\end{theorem}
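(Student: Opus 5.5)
The plan follows the strategy outlined in the abstract. The equations \eqref{eq:cc} have the form $\sum_{j\neq i} f(\theta_j-\theta_i)=0$ for each $i$. Here $f$ is the odd coorbital force function $f(x)=\sin x\,(1-|2\sin(x/2)|^{-3})$. We order the satellites cyclically and write the gaps $x_k=\theta_{k+1}-\theta_k>0$, with $\sum_k x_k=2\pi$. A reflection of the cyclic configuration then corresponds to a reversal of the gap sequence: $x_{k}\mapsto x_{c-k}$ for some $c$ modulo $N$. The parity of $c$ determines whether the axis passes through a satellite or through the midpoint of a gap.

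\textbf{Deviation variables.} For each of the $N$ candidate reflections $\sigma_c$, we introduce the variables $d_k=x_k-x_{c-k}$. These vanish identically exactly when the configuration is $\sigma_c$-symmetric. Subtracting the equation for body $i$ from the reflected equation for body $\sigma_c(i)$ expresses each difference of forces $f(a)-f(b)$ as $(a-b)\int_0^1 f'(b+t(a-b))\,dt$. Here $a-b$ is a signed sum of the $d_k$. This yields a linear system $M(x)\,d=0$. Pairing it against $d$, and using the symmetry $i\leftrightarrow\sigma_c(i)$, we can arrange $M$ to be symmetric. Hence it suffices to show that the quadratic form $d^{T}Md$ is definite, or at least nonzero for $d\neq0$, for \emph{some} choice of $c$. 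The quantity $M$ consists of averaged values of $f'$ along segments. The key analytic input is concavity of $f'$ (or a piecewise concavity or convexity statement on the relevant range of arcs). Since an average of a concave function over a segment is at least the minimum of its endpoint values, this lets us replace the integral coefficients by explicit two-sided bounds in terms of the gaps themselves.

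\textbf{Reduction to finitely many checks.} First, we obtain a priori bounds on the gaps from the equations. A minimal gap $\ge \delta_N>0$ follows because the near-collision repulsion of $f$ dominates, and no gap can exceed an explicit maximum. This confines $x$ to a compact box in the simplex. Second, we choose $c$ adapted to the configuration, for instance the reflection that minimizes $\sum d_k^2$, or the one aligned with the largest gap. This choice restricts the possible signs and sizes of the $d$'s and improves the estimates. Third, on the compact region we subdivide into boxes. On each box we use interval arithmetic to bound the entries of $M$ via the concavity estimates, and we verify that the bounding quadratic form is definite, for example by a Gershgorin or Cholesky test with interval entries. Boxes where this fails near genuinely symmetric solutions (including the regular polygon and the known symmetric families) need separate treatment. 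There, we would use a local uniqueness argument: a nondegenerate Jacobian shows that the only solution in a neighbourhood is the symmetric one.

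\textbf{Main obstacle.} The hardest step will be controlling the quadratic form near configurations where $M$ becomes nearly degenerate. This happens near tight clusters, where $f'$ is large and positive, mixed with wide gaps, where $f'$ changes sign, and also near symmetric solutions with a different axis, where $d$ for the ``wrong'' $c$ is not small. I would first try to handle this by always choosing the reflection that best fits the configuration, so that $d$ is small, and by splitting the form into a dominant diagonal part coming from short arcs, which is positive by the repulsive singularity, plus off-diagonal terms bounded by the concavity estimates. Whatever cases remain would be dispatched by the finite interval-arithmetic verification, separately for each $N\in\{5,6,7,8\}$.
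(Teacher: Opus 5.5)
\textbf{The core reduction has a genuine gap.} From $Md=0$ you keep only $d^TMd=0$, and you then require $d^TMd\neq0$ for all $d\neq0$. The defect space has dimension at least two, so this is the same as definiteness, and your Gershgorin/Cholesky test is a definiteness test. But the reflection matrices are generically indefinite, for every candidate axis and on open sets of configurations.

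Work in the paper's sign convention, $g=f'$ with $f(t)=\sin t\,((2\sin(t/2))^{-3}-1)$. Then:
\begin{itemize}
\item At the regular pentagon, every candidate matrix is $\begin{pmatrix}g_1+g_2&g_2\\ g_2&g_1+2g_2\end{pmatrix}$ with $g_1=g(2\pi/5)\approx-1.33$ and $g_2=g(4\pi/5)\approx0.65$. Its determinant is about $-0.40$.
\item At the regular hexagon, every edge matrix satisfies $Q_E(1,-1)<0<Q_E(1,1)=2-5\sqrt3/6$.
\item Every vertex matrix there satisfies $Q_V(1,1,1)=33/16-10\sqrt3/9>0$, while being negative on $X_1+X_2+X_3=0$.
\end{itemize}

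In fact the paper proves that every reflection matrix $L$ (your $M$, up to sign and congruence) is strictly negative on a fixed codimension-one subspace. This comes from the Jensen bound $\Phi(z,h)\le g(z)$, which gives $L(s,t)\preceq L(s,0)$, together with scalar inequalities such as $g(t)+6g(\pi-t)<0$ and $g(t)+7g(\pi-t)<-1/4$. Hence $L$ is never positive definite. Wherever it fails to be negative definite, it has exactly one nonnegative eigenvalue.

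The set where no candidate is definite is determined by the coefficients, not by where solutions lie. It contains open neighbourhoods of the regular pentagon and hexagon, and the paper's six-satellite cover in fact uses a positive direction on every final subregion. Note also that with your sign of $f$, $f'$ is convex rather than concave. The inequality that matters is the Jensen bound above, not an endpoint-minimum bound.

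\textbf{Your fallback does not close this.}
\begin{itemize}
\item Boxes with no solution nearby, on which no candidate is definite, get no criterion in your plan. You would need to exclude them by showing that a force, or a fixed combination $\sum_i w_iF_i$, has a strict sign.
\item The local-uniqueness step presupposes a complete list of the solutions for $N=6,7,8$ together with their nondegeneracy. That is essentially a classification, which is not available, and the step would break at any degenerate solution.
\end{itemize}

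\textbf{The missing idea} is to keep the linear equation $Lt=0$ itself and aim only for nonsingularity, not definiteness. Given the universal negative subspace, a single direction $w$ with $w^TLw>0$ already makes $L$ invertible (Lemma~\ref{lem:inertia}). Because $\Phi$ is jointly concave with affine arguments, $w^TLw$ is concave in the gaps. Its positivity on a polytope therefore follows from finitely many vertex checks. A negative-definite upper bound built from $L(s,t)\preceq L(s,0)$ is then just one of several closing criteria, alongside the force-sign exclusions. For $N=5$ the paper likewise uses the linear equations beyond the quadratic form, through the sign law $A+C+D=-B(p+q)^2/(pq)$.
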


The theorem concerns the limiting angular problem. It also gives the
following consequence for limits of finite-mass central configurations.

\begin{corollary}[Non-coalescent finite-mass limits]\label{cor:limits}
Consider a convergent normalized sequence of planar central configurations
with masses $1,\epsilon,\ldots,\epsilon$ and $\epsilon\to0$, where the
$N$ satellite limits are distinct and $N\in\{5,6,7,8\}$. Then the limiting
configuration has a reflection axis through the dominant body.
\end{corollary}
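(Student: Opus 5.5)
The plan is to deduce the corollary from Theorem~\ref{thm:main} by showing that the limiting configuration satisfies the angular equations \eqref{eq:cc}.

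\textbf{Step 1: setup.} Normalize each central configuration $q^{(\epsilon)}$ by translating the dominant body to the origin and rescaling. By hypothesis the sequence converges to a limit with distinct satellite positions $x_1,\dots,x_N$. Write the central-configuration equation with multiplier $\lambda_\epsilon$. For the dominant body the equation gives $\lambda_\epsilon\to$ a positive constant. Then, as in the standard reduction recalled in the introduction (non-coalescent limits), the equation for satellite $i$ at order $\epsilon^0$ reads
\[
\frac{x_i}{|x_i|^3}=\lambda x_i ,
\]
so all satellites lie on a common circle centered at the dominant body, say $x_j=r e^{2\pi i\theta_j}$.

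\textbf{Step 2: the angular equations.} Subtract the zeroth-order balance from the satellite equations and divide by $\epsilon$. The tangential component of the satellite $i$ equation then gives, in the limit, exactly the angular balance \eqref{eq:cc}. This is the usual first-order perturbation argument.

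The one delicate point is that the radial deviations $|x^{(\epsilon)}_i|-r$ are $O(\epsilon)$. Projecting onto the tangential direction $ie^{2\pi i\theta_i}$ eliminates the unknown radial corrections at first order, because the central term $q_i/|q_i|^3-\lambda_\epsilon q_i$ is radial up to $O(\epsilon\cdot\text{angular deviation})$. The mutual forces among the satellites contribute
\[
\epsilon\sum_{j\ne i}\frac{x_j-x_i}{|x_j-x_i|^3},
\]
whose denominators are bounded away from zero by distinctness. The recoil of the dominant body contributes a term that is uniform in $i$ and cancels after using the center-of-mass relation.

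This step is where I expect care is needed. If the paper's normalization differs, I would instead cite the standard derivation (Moeckel's / Albouy--Cabral--Santos's reduction) that the paper presumably used to derive \eqref{eq:cc}.

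\textbf{Step 3: conclusion.} The limiting angles $\theta_j$ are pairwise distinct modulo one turn and satisfy \eqref{eq:cc}. Theorem~\ref{thm:main} therefore gives a line through the center of the circle, which is the dominant body, about which the satellite set is reflection-invariant. The dominant body lies on that line, so the whole limiting configuration is reflection-symmetric.
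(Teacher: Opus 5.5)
Your top-level route is the paper's: derive \eqref{eq:cc} for the limit, then invoke Theorem~\ref{thm:main}. Step~3 matches the paper's conclusion. The gap is in Step~1, which is where Lemma~\ref{lem:limit} does its real work.

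The hypothesis only makes the $N$ satellite limits pairwise distinct. It does not prevent one of them from coinciding with the primary, and writing $x_i/|x_i|^3=\lambda x_i$ silently assumes $x_i\ne0$.

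Your control of $\lambda$ also fails. After division by $\epsilon$, the primary's equation is $\sum_j r_j/|r_j|^3=\lambda\sum_j r_j/(1+N\epsilon)$. This holds for every $\lambda$ whenever both sides vanish. An example is the regular $N$-gon centred at the primary, which is an exact central configuration for every $\epsilon$. So this equation cannot yield $\lambda_\epsilon\to$ a positive constant.

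The paper instead subtracts the primary's equation from each satellite's to obtain \eqref{eq:finite}. It then rules out coalescence with the primary by comparing scales:
\begin{itemize}
\item If $\rho=|r_i|\to0$ (for at most one $i$), the scalar product of that satellite's equation with $r_i/\rho^2$ gives $\lambda\rho^3\to1$.
\item The equation of a satellite with $|r_k|=1$ gives $|\lambda|=O(1+\epsilon/\rho^2)$, i.e.\ $\lambda\rho^3\to0$.
\end{itemize}
Only after this contradiction are all terms without an explicit factor $\epsilon$ bounded. Then a unit-radius satellite bounds $\lambda$, and every limit multiplier satisfies $\lambda_*=|r_i^*|^{-3}$ for all $i$. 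This forces all radii to equal one.

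Step~2 also contains an error: the recoil term $-\epsilon\sum_j r_j/|r_j|^3$, although the same vector for every $i$, does not cancel. Its cross product with $r_i$ is $-\epsilon\bigl(\sum_{j\ne i}\sin\theta_{ij}+o(1)\bigr)$. This is precisely the indirect part $-\sin t$ of $f$ in \eqref{eq:f}, and cancelling it would produce the wrong limiting equation.

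Finally, the $O(\epsilon)$ radial estimate you single out as delicate is unnecessary. Take the planar cross product of \eqref{eq:finite} with the current vector $r_i$, rather than projecting onto the limiting tangent. This annihilates $-r_i/|r_i|^3+\lambda r_i$ exactly, and dividing by $\epsilon$ and passing to the limit gives \eqref{eq:cc} directly.
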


The passage from the finite-mass central-configuration equations to the
angular equations \eqref{eq:cc} is established in
Lemma~\ref{lem:limit}. In particular, the possibility that a
satellite approaches the dominant body in the limit is excluded as part of
the argument rather than assumed away.

The implication above is only in the limiting direction. We do not claim
that every solution of the limiting angular problem continues to a family
for positive $\epsilon$, nor do we claim that a finite-mass configuration
with fixed $\epsilon>0$ has the same reflection symmetry. The question for
$N\geq9$ is also outside the scope of the present paper.

The proof of Theorem~\ref{thm:main} is computer-assisted, but it is not based
on a numerical enumeration of stationary points. The analytical part reduces
the symmetry problem to a finite collection of inequalities on compact
parameter regions. These inequalities are then verified rigorously by exact
or interval computations. The continuous implications of the finite checks
are proved in Section~5, while the case-specific verification results are
given in Propositions~\ref{n5:prop:vertices},
\ref{n6:prop:certificate}, \ref{n7:prop:globalcover},
and~\ref{n8:prop:cover}.

\subsection{Relation to earlier work}

The limiting coorbital problem and the classification of its angular
solutions have been studied for many years; see, for example, Cors, Llibre,
and Oll\'e~\cite{CLO} and Albouy--Fu~\cite{AF}. Albouy and Fu obtained the
complete three-class classification for four identical satellites, where
reflection symmetry is an essential step in the classification.

For five satellites, Deng, Hampton, and Wang~\cite[Theorem~6]{DHW} proved
reflection symmetry in the convex case under the corresponding equal-mass
assumptions. Deng and Xu~\cite[Theorem~3.1]{DX} later extended this convex
symmetry result to homogeneous potentials under symmetric mass conditions.
These results are closely related to the present problem, but they do not
cover the full nonconvex equal-mass region considered here.

In the terminology of these works, a convex coorbital configuration is one
for which the center of the circle lies outside the convex hull of the
satellites. Equivalently, the satellites lie in an open semicircle, or some
cyclic gap is larger than $\pi$. In the present Newtonian equal-mass problem,
this large-gap region can be treated directly by the reflection-matrix
argument. The remaining region, in which all gaps are at most $\pi$, requires
the additional analysis developed below.

Together with the earlier results for smaller numbers of satellites reviewed
in~\cite{AF}, Theorem~\ref{thm:main} settles the reflection-symmetry part of
Problem~14 for the cases considered here. We do not attempt to classify all
reflection-symmetric configurations for $N=6,7,8$. For $N=5$, however, the
symmetry result allows a further two-dimensional reduction, and
Section~\ref{sec:n5classification} gives a complete three-class
classification.

\subsection{Outline of the method}

The proof is based on measuring how far a configuration is from a possible
reflection symmetry. For each candidate reflection, we introduce variables measuring this deviation, called reflection defects,
\[
\mathbf t=(t_1,\ldots,t_m)^T
\]
such that
\[
\mathbf t=0
\]
exactly when the configuration is symmetric with respect to that candidate
reflection. The remaining variables, denoted by $\mathbf s$, describe the
corresponding symmetric part of the configuration.

The angular central-configuration equations imply an equation of the form
\[
L(\mathbf s,\mathbf t)\mathbf t=0,
\]
where $L(\mathbf s,\mathbf t)$ is a real symmetric matrix associated with
the chosen reflection. We call it the reflection matrix. Thus, if
$L(\mathbf s,\mathbf t)$ is nonsingular, then necessarily
\[
\mathbf t=0,
\]
and the corresponding reflection symmetry follows.

The main task is therefore to prove nonsingularity of at least one reflection
matrix for every possible solution. A common estimate shows that, in each
case, the reflection matrix is strictly negative on a subspace of codimension
one. Consequently, it is enough to find one direction on which the associated
quadratic form is positive. When such a positive direction is inconvenient
to obtain, we instead prove that the whole matrix is negative definite.
In some remaining regions, it is simpler to rule out a stationary point
directly by showing that one force, or a fixed linear combination of the
forces, cannot vanish.

A concavity property of the force derivative makes these estimates suitable
for rigorous finite verification. In particular, positivity of a fixed
quadratic form on a compact polytope can be reduced to checking its values at
the vertices. This is the main reason why the remaining continuous parameter
domains can be treated by finite exact computations.

The four values of $N$ share this general strategy, but the final reductions
are different. For $N=5$, the remaining problem is reduced by ordering the
gaps. For $N=6$, suitable positive quadratic directions are sufficient. For
$N=7$ and $N=8$, the proof combines matrix estimates with direct force
inequalities; the eight-satellite case also uses an affine matrix upper
bound.

\begin{center}
\begin{tabular}{lll}
\toprule
Satellites & Reflection-matrix size & Final step of the proof\\
\midrule
$5$ & $2$ & Gap-order reduction and finite checks\\
$6$ & $2,3$ & Positive quadratic directions\\
$7$ & $3$ & Matrix and force inequalities\\
$8$ & $3,4$ & Matrix/force inequalities and affine upper bounds\\
\bottomrule
\end{tabular}
\end{center}

Sections~2--5 develop the common equations, analytic estimates, and rigorous
finite-verification principles. Sections~6--9 treat the cases
$N=5,6,7,8$, respectively. Section~\ref{sec:n5classification} gives the
additional classification result for five satellites, and the final section
summarizes the logical dependence of the computer-assisted parts of the
proof.

\section{Equations, normalization, and the limiting problem}\label{sec:equations}
\subsection{The angular problem}
Write the satellite positions in positive cyclic order, fix a rotational
representative by $\theta_0=0$, and set
\begin{equation}\label{eq:gaps}
 x_i=\theta_{i+1}-\theta_i>0,\qquad
 \theta_N=\theta_0+2\pi,\qquad
 \sum_{i=0}^{N-1}x_i=2\pi,\qquad R_i=x_i/\pi.
\end{equation}
Indices on gaps are cyclic. A later reflection chart may choose a different
rotational origin; the force equations are unchanged. Let $\theta_{ij}\in(0,2\pi)$ be the positive
directed separation from satellite $i$ to satellite $j$, with the
angle difference interpreted modulo one full turn. We use
\begin{align}
 f(t)&=\sin t\left((2\sin(t/2))^{-3}-1\right),\quad 0<t<2\pi,
 \label{eq:f}\\
 F_i(\theta)&=\sum_{j\ne i}f(\theta_{ij})=0,
 \quad 0\leq i<N.\label{eq:cc}
\end{align}
In particular $f(2\pi-t)=-f(t)$ and $\sum_iF_i=0$.

Define the even periodic pair potential, away from its collision points, by
\begin{equation}\label{eq:W}
 \widehat W(t)=\cos t-\frac1{2|\sin(t/2)|},\qquad
 W(t)=\cos t-\frac1{2\sin(t/2)}\quad(0<t<2\pi).
\end{equation}
Then $W'=f$, and the full angular potential satisfies
\begin{equation}\label{eq:potential}
 \mathcal U(\theta)=\sum_{0\leq i<j<N}\widehat W(\theta_j-\theta_i),
 \qquad \partial_{\theta_i}\mathcal U=-F_i.
\end{equation}
This effective potential is not the positive Newtonian potential of the
unreduced problem. Its sign must be kept when interpreting Hessians and
force derivatives. Criticality is taken on the collision-free angular
configuration space modulo common rotation. We do not quotient out
reflections when formulating the equations; cyclic relabeling and reversal
are used only to choose covering representatives. When counting shapes
for five satellites, these two operations generate the dihedral action
$D_5$ on the cyclic gaps.

\subsection{Why the angular equations apply to the original problem}
For positive masses $m_i$, use the positive Newtonian potential
$U_{\rm Newt}=\sum_{i<j}m_im_j/|q_i-q_j|$ and the central-configuration
convention
\[
 \nabla_{q_i}U_{\rm Newt}=-\lambda m_i(q_i-c).
\]
In the next lemma only, the primary is labeled separately and the
satellites are numbered $1,\ldots,N$. Translating the primary to zero
is not the same as fixing the center of mass at zero.

\begin{lemma}[Necessary non-coalescent limit equation]\label{lem:limit}
Let $N\geq3$. Consider a convergent sequence of planar central
configurations of masses $1,\epsilon,\ldots,\epsilon$, with
$\epsilon\to0$. Translate the primary to zero and normalize
$\max_i|r_i|=1$, where $r_i$ are the relative satellite positions.
If the satellite limits are distinct, all limiting radii equal one and
the limiting angles satisfy \eqref{eq:cc}.
\end{lemma}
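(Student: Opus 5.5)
The plan is to write the central-configuration equations in coordinates relative to the primary and to track the scale factor $\lambda$ along the sequence. Place the primary at $q_0=0$ and the satellites at $r_i$, with $\max_i|r_i|=1$. The center of mass is $c=\epsilon\sum_j r_j/(1+N\epsilon)$, so $c\to0$. The equations for satellite $i$ read
\[
 \frac{r_i}{|r_i|^3}+\epsilon\sum_{j\ne i}\frac{r_i-r_j}{|r_i-r_j|^3}=\lambda(r_i-c).
\]
The equation for the primary reads $\epsilon\sum_j r_j/|r_j|^3=\lambda c$. It is a consequence of the others, so I will use only the satellite equations.

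\textbf{Step 1 (bounds on $\lambda$ and $|r_i|$).} Pick $k$ with $|r_k|=1$ and dot its equation with $r_k$. The mutual terms have bounded coefficients because the limits are distinct: $|r_i-r_j|\geq\delta>0$ for large indices. This gives $\lambda=1+O(\epsilon)$. Now consider any satellite $i$ and dot its equation with $r_i$. This gives $|r_i|^{-1}+O(\epsilon)=\lambda(|r_i|^2-c\cdot r_i)$. If some limit radius $\rho_i$ were $0$, the left side would blow up while the right side stays bounded. If $\rho_i=0$ exactly, distinctness still allows it, so this exclusion is exactly the "approach the primary" issue, and it is handled by this blow-up. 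In the limit we get $\rho_i^{-1}=\rho_i^2$, hence $\rho_i=1$ for every $i$.

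\textbf{Step 2 (first-order expansion).} Write $r_i=(1+\epsilon u_i+o(\epsilon))e^{\mathrm i\theta_i}$ and $\lambda=1+\epsilon\mu+o(\epsilon)$. Justifying this form is the main obstacle, since we only know $|r_i|\to1$ and not the rate. To avoid needing an expansion, I will project onto the tangential direction. Take the component of the satellite-$i$ equation along $\mathrm i r_i/|r_i|$. The Kepler term $r_i/|r_i|^3$ is radial and drops out exactly. The term $\lambda r_i$ also drops out exactly. What remains is
\[
 \epsilon\sum_{j\ne i}\frac{(r_i-r_j)\cdot \mathrm i\hat r_i}{|r_i-r_j|^3}=-\lambda\, c\cdot \mathrm i\hat r_i,
\qquad \hat r_i=r_i/|r_i|.
\]
Recall $c=\epsilon\sum_j r_j/(1+N\epsilon)$. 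Divide by $\epsilon$ and let $\epsilon\to0$, using $r_j\to e^{\mathrm i\theta_j}$ and $\lambda\to1$. This yields
\[
\sum_{j\ne i}\Big[\frac{-\sin(\theta_j-\theta_i)}{|e^{\mathrm i\theta_i}-e^{\mathrm i\theta_j}|^3}+\sin(\theta_j-\theta_i)\Big]=0 .
\]
The first term is the mutual attraction and the second is the recentering (indirect) term. So no rate information on $|r_i|-1$ is needed; the tangential projection removes it.

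\textbf{Step 3 (identification with \eqref{eq:cc}).} Use $|e^{\mathrm i\theta_i}-e^{\mathrm i\theta_j}|=2\sin(\theta_{ij}/2)$ and $\sin(\theta_j-\theta_i)=\sin\theta_{ij}$. Each summand then becomes $-\sin\theta_{ij}\bigl((2\sin(\theta_{ij}/2))^{-3}-1\bigr)=-f(\theta_{ij})$, which is exactly $F_i=0$. The only care needed is the sign and orientation convention for $\theta_{ij}$ and the check that $f$ is odd under $t\mapsto2\pi-t$, which makes the choice of direction immaterial.
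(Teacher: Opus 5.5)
Your proof is correct. Its skeleton is the paper's: bound $\lambda$, exclude a satellite collapsing onto the primary, obtain unit radii, then take the tangential component, divide by $\epsilon$, and pass to the limit. Your projection on $\mathrm i\hat r_i$ is the paper's cross product with $r_i$.

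The genuine difference is which form of the satellite equation you work with.

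\textbf{The paper's route.} The paper subtracts the primary equation. This eliminates $c$ but introduces the indirect term $-\epsilon\sum_j r_j/|r_j|^3$, which is not a priori small if some $|r_j|\to0$. The paper therefore needs a two-sided asymptotic comparison to exclude that scenario: $\rho^3\lambda\to1$ from the collapsing satellite, against $\rho^3\lambda\to0$ from another satellite. It then identifies $\lambda_*=1$ through convergent subsequences.

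\textbf{Your route.} You keep the center-of-mass form and substitute $c=\epsilon\sum_j r_j/(1+N\epsilon)$. Because of the normalization $\max_j|r_j|=1$, this is $O(\epsilon)$ uniformly. A satellite with $|r_k|=1$ then gives $\lambda=1+O(\epsilon)$ along the whole sequence. The radial component of any other satellite's equation keeps $|r_i|^{-1}$ bounded, so the collapse is excluded in one line.

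\textbf{The trade-off.} In your tangential equation $\lambda$ survives through the term $\lambda c$, so you genuinely need $\lambda\to1$ there. In the paper's relative form the cross product removes $\lambda$ entirely. Your Step~1 supplies exactly what you need. The two indirect terms agree, since the primary equation gives $\lambda c=\epsilon\sum_j r_j/|r_j|^3$, and both tend to $\sin(\theta_j-\theta_i)$.

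\textbf{Presentation.} Two small points. First, drop the abandoned expansion ansatz at the start of Step~2, since the projection makes it unnecessary. Second, replace the sentence ``If $\rho_i=0$ exactly, distinctness still allows it'' with the plain remark that distinct satellite limits do not prevent one satellite from tending to the primary, and that your blow-up argument rules out exactly this case.
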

\begin{proof}
Subtract the primary equation from the satellite equation. This removes
the center-of-mass term and gives
\begin{equation}\label{eq:finite}
 -\frac{r_i}{|r_i|^3}
 +\epsilon\sum_{j\ne i}\frac{r_j-r_i}{|r_j-r_i|^3}
 -\epsilon\sum_j\frac{r_j}{|r_j|^3}
 =-\lambda r_i.
\end{equation}
Suppose one satellite tends to the primary. Distinct satellite limits
imply that it is the only one, and all satellite--satellite distances
remain bounded away from zero. Put $\rho=|r_i|\to0$. Isolating its term
in the last sum, and taking the scalar product with $r_i/\rho^2$, yields
\[
 \lambda=\frac{1+\epsilon}{\rho^3}+O(\epsilon/\rho).
\]
For another satellite, whose radius stays bounded below, the same relative
equation instead gives $|\lambda|=O(1+\epsilon/\rho^2)$. Multiplying
both statements by $\rho^3$ gives respectively limits one and zero,
a contradiction. No satellite limit is the primary.

All terms remaining after the explicit factors $\epsilon$ in
\eqref{eq:finite} are now bounded. Fixing one satellite bounds $\lambda$;
every convergent multiplier subsequence has limit satisfying
$\lambda_*r_i^*=r_i^*/|r_i^*|^3$. The nonzero limiting radii are equal,
and their maximum is one. Thus every radius is one and $\lambda_*=1$.

Take the planar cross product of \eqref{eq:finite} with the current
vector $r_i$, and divide by $\epsilon$ before taking the limit. The
radial terms vanish exactly. We obtain
\[
 \sum_{j\ne i}(r_i\wedge r_j)
 \left(|r_j-r_i|^{-3}-|r_j|^{-3}\right)=0.
\]
All denominators have nonzero limits. The unit-radius limit is precisely
\eqref{eq:cc}, including its indirect-force term $-\sin\theta_{ij}$.
\end{proof}

Only this necessary direction is used. An angular solution is not asserted
to extend to a finite-$\epsilon$ family, and symmetry of its limit is
not asserted to imply symmetry at positive $\epsilon$.

\section{Variables measuring deviation from reflection symmetry}\label{sec:common}
\subsection{Concavity of the force derivative}
The estimates below use strict concavity of the force derivative. This
property allows us to control divided differences and quadratic forms. Write $g=f'$ and $s=\sin(t/2)$.
Direct differentiation gives
\begin{equation}\label{eq:g}
 g(t)=-\frac1{4s^3}+\frac1{8s}-1+2s^2,
 \qquad g(\pi)=\frac78,\quad g(\pi/2)=-\frac{3\sqrt2}{8}.
\end{equation}

\begin{lemma}[Concavity of the divided difference]\label{lem:phi}
The function $g$ is strictly concave on $(0,2\pi)$, symmetric about
$\pi$, and strictly increasing on $(0,\pi)$. On the open convex domain
$|h|<\min\{z,2\pi-z\}$, define the divided difference below, using
its continuous value when the two arguments coincide. Its sign is retained
throughout the argument. The function
\begin{equation}\label{eq:phi}
 \Phi(z,h)=\frac12\int_{-1}^1g(z+hv)\dd v
 =\begin{cases}\dfrac{f(z+h)-f(z-h)}{2h},&h\ne0,\\g(z),&h=0\end{cases}
\end{equation}
is jointly strictly concave, even in $h$, and satisfies
\begin{equation}\label{eq:phi-properties}
 \Phi(2\pi-z,h)=\Phi(z,h),\qquad \Phi(z,h)\leq g(z).
\end{equation}
For fixed $h$, it is strictly increasing in $z<\pi$ on its domain;
for fixed $z$, it is strictly decreasing as $h>0$ increases.
\end{lemma}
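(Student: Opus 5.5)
The plan is to get everything about $\Phi$ from the properties of $g$, using the integral representation \eqref{eq:phi}. So the first step is to establish the three claims about $g$ itself.

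\emph{Properties of $g$.} Put $s=\sin(t/2)\in(0,1]$ for $t\in(0,2\pi)$. Then $g$ is a function of $s$ alone, so symmetry about $\pi$ is immediate from $s(2\pi-t)=s(t)$. Differentiating with $ds/dt=\tfrac12\cos(t/2)$ gives
\[
 g'(t)=\tfrac12\cos(t/2)\Bigl(\tfrac{3}{4s^4}-\tfrac1{8s^2}+4s\Bigr).
\]
On $(0,1]$ we have $\tfrac{3}{4s^4}>\tfrac1{8s^2}$, so the bracket is positive. Hence $g'>0$ on $(0,\pi)$, and $g$ is strictly increasing there.

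For concavity, write $c=\cos(t/2)$ and $B(s)=\tfrac{3}{4s^4}-\tfrac1{8s^2}+4s$, so that $g'=\tfrac12 cB(s)$. Differentiating again,
\[
 g''=-\tfrac14 sB(s)+\tfrac14c^2B'(s),\qquad c^2=1-s^2,
\]
with
\[
 B'(s)=-\tfrac{3}{s^5}+\tfrac1{4s^3}+4 .
\]
The key estimate is that $B'(s)<0$ on $(0,1]$: since $B'(s)\le -\tfrac{11}{4s^5}+4$ and $s\le 1$, we get $B'(s)<0$. Both terms of $g''$ are then nonpositive, and the first is strictly negative, so $g''<0$ on $(0,2\pi)$. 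This is the main computational point; it is elementary, and the possible obstacle is only bookkeeping of signs.

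\emph{Properties of $\Phi$.} The domain $D=\{|h|<\min(z,2\pi-z)\}$ is an open convex set (a rhombus). For $(z,h)\in D$ and $v\in[-1,1]$ we have $z+hv\in(0,2\pi)$. The map $(z,h)\mapsto z+hv$ is linear, so for each fixed $v$ the function $(z,h)\mapsto g(z+hv)$ is concave on $D$. Averaging over $v$ preserves concavity. Strictness follows because, for two distinct points of $D$, the images $z+hv$ differ for all but at most one $v$, so strict inequality holds on a set of positive measure. The closed form in \eqref{eq:phi} follows from $f'=g$ (fundamental theorem of calculus).

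The remaining properties of $\Phi$ follow from this representation:
\begin{itemize}
\item \emph{Evenness in $h$}: substitute $v\mapsto -v$.
\item \emph{Reflection identity} $\Phi(2\pi-z,h)=\Phi(z,h)$: use $g(2\pi-z-hv)=g(z+hv)$ together with evenness in $h$.
\item \emph{The bound} $\Phi\le g(z)$: apply Jensen's inequality, since the average of $z+hv$ over $v$ is $z$ and $g$ is concave.
\end{itemize}

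\emph{Monotonicity.}
\begin{itemize}
\item In $z$ for fixed $h$ with $z<\pi$: we have $\partial_z\Phi=\tfrac12\int_{-1}^1 g'(z+hv)\,dv$. Since $g'$ is strictly decreasing and odd about $\pi$, this equals $\tfrac{1}{2h}\bigl(g(z+|h|)-g(z-|h|)\bigr)$ for $h\ne0$, and $g'(z)$ for $h=0$. For $z<\pi$ the points $z\pm|h|$ are symmetric about $z$, which lies left of $\pi$. Symmetry and strict concavity of $g$ about $\pi$ give $g(z+|h|)>g(z-|h|)$, because $z+|h|$ is strictly closer to $\pi$ than $z-|h|$. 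Hence $\partial_z\Phi>0$.
\item In $h>0$ for fixed $z$: we have $\partial_h\Phi=\tfrac12\int_{-1}^1 v\,g'(z+hv)\,dv=\tfrac12\int_0^1 v\bigl(g'(z+hv)-g'(z-hv)\bigr)\,dv$. Since $g'$ is strictly decreasing, this is negative for $h>0$.
\end{itemize}
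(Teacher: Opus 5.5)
\textbf{There is a genuine gap: the strict concavity of $g$ is not established, because the key estimate you use for it is false.}

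Your treatment of $\Phi$ is sound and is essentially the paper's argument in slightly different dress. You average $g(z+hv)$ over $v$ to get joint strict concavity, where the paper uses $\frac12\int_{-1}^1 g''(z+hv)(a+bv)^2\,dv<0$. You use Jensen for $\Phi\le g$, and $v\mapsto -v$ plus the symmetry of $g$ for the two identities. For the $h$-monotonicity you use $\partial_h\Phi=\frac12\int_0^1 v\bigl(g'(z+hv)-g'(z-hv)\bigr)\,dv$ in place of the paper's strict trapezoid inequality.

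One small slip in the $z$-derivative: the correct form is $\frac{g(z+h)-g(z-h)}{2h}=\frac{g(z+|h|)-g(z-|h|)}{2|h|}$. This is just the fundamental theorem of calculus. Your prefactor $\frac{1}{2h}$, with $|h|$ inside the bracket, has the wrong sign for $h<0$.

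\textbf{Where the concavity proof fails.} You claim $B'(s)<0$ on $(0,1]$. Your bound $B'(s)\le 4-\frac{11}{4s^5}$ is correct, but $s\le1$ only turns it into $B'(s)\le\frac54$. In fact $B'(1)=-3+\frac14+4=\frac54>0$. More generally, $B'(s)>0$ exactly when $16s^5+s^2>12$, i.e.\ for $s\gtrsim0.93$, which is an interval of $t$ around $\pi$. On that interval the term $\frac14c^2B'(s)$ in your formula for $g''$ is positive, so the claim that both terms are nonpositive fails.

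This matters for most of the lemma. The joint concavity of $\Phi$, the bound $\Phi\le g$, and the strict decrease in $h$ all rest on $g''<0$. Evenness, the reflection identity, and the $z$-monotonicity survive, since the last needs only symmetry and strict increase of $g$, which you did prove.

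\textbf{How to repair it.} The fix is short but necessary: wherever $B'(s)>0$, you must show $c^2B'(s)<sB(s)$.
\begin{itemize}
\item Since $\frac{3}{4s^4}>\frac{1}{8s^2}$, you have $B(s)>4s$, hence $sB(s)>4s^2$.
\item $B'(s)>0$ forces $s^5>\frac{11}{16}$, hence $s^2>\frac12$ and $c^2<\frac12$.
\item Together with $B'(s)\le\frac54$, this gives $c^2B'(s)<\frac58<2<4s^2<sB(s)$.
\end{itemize}
The paper instead combines everything into one rational expression,
\[ g''(t)=-\frac{64s^7-32s^5+s^4-20s^2+24}{32s^5}, \]
and shows the numerator is positive by splitting at $s^2=\frac12$. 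For $s^2\ge\frac12$ the first two terms have nonnegative sum and $s^4-20s^2+24\ge5$. For $s^2\le\frac12$ the numerator exceeds $14-4\sqrt2>0$. Once $g''<0$ is actually established by either route, the rest of your proposal goes through.
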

\begin{proof}
Differentiation of \eqref{eq:g} gives
\[
 g''(t)=-\frac{64s^7-32s^5+s^4-20s^2+24}{32s^5}<0.
\]
If $s^2\geq1/2$, the first two numerator terms have nonnegative sum
and $s^4-20s^2+24\geq5$. If $s^2\leq1/2$, the numerator is at least
$14-4\sqrt2>0$. Reflection about $\pi$ now gives the stated increase.
For $(a,b)\ne(0,0)$,
\[
 D^2\Phi(z,h)[(a,b),(a,b)]
 =\frac12\int_{-1}^1g''(z+hv)(a+bv)^2\dd v<0.
\]
The symmetry identities are immediate and the upper bound is Jensen's
inequality. Symmetry and strict concavity in $z$ imply its monotonicity.
For $h>0$, differentiation gives
\[
 \partial_h\Phi=\frac1h\left(
 \frac{g(z-h)+g(z+h)}2-\frac1{2h}\int_{z-h}^{z+h}g(v)\dd v\right)<0,
\]
by the strict trapezoid inequality for a strictly concave function.
\end{proof}

\subsection{Pair blocks and the averaged Hessian}
A reflected pair of interactions naturally leads to
\begin{equation}\label{eq:J}
 J_z(h)=\widehat W(z+h)+\widehat W(z-h),\qquad
 J'_z(h)=2h\Phi(z,h).
\end{equation}
The entire interval $[z-|h|,z+|h|]$ must lie in $(0,2\pi)$.
Noncollision of two endpoints considered only modulo $2\pi$ is not
sufficient. In the applications, the two endpoints are positive proper
arc sums or their complements, so the full interval condition holds.
For two moving pairs with half-separations $0<a<b<\pi$, the four cross
interactions are
\begin{equation}\label{eq:Kblock}
 J_{b-a}(h)+J_{b+a}(h).
\end{equation}
Interactions between the two satellites of a single pair are independent
of the reflection defect. We retain them whenever derivatives with respect
to the average variables will also be needed.

In the next lemma, $t$ is the vector of reflection defects and $s$ collects
the average variables. Each $z_\nu(s)$ is the central angular separation of
one paired interaction, and each fixed vector $b_\nu$ expresses its defect
as a linear combination of the entries of $t$. The lemma derives the
reflection matrix and the concavity used in finite verification. Each later
application uses the coordinates specified for that reflection type.

\begin{lemma}[Averaged reflection matrix]\label{lem:avg}
Fix average variables $s$ and suppose that on a convex collision-free
defect domain containing zero the full restricted potential is a sum of
blocks $J_{z_\nu(s)}(b_\nu^Tt)$, defect-independent terms, and possibly
an unpaired term $\widehat W(\pi+b_0^Tt)$. Then
\begin{align}
 \nabla_t\mathcal U_s(t)&=2L(s,t)t,\label{eq:kernel}\\
 L(s,t)&=\frac12\int_0^1D_t^2\mathcal U_s(\lambda t)\dd\lambda
 \preceq L(s,0).\label{eq:average}
\end{align}
Each paired block contributes
$\Phi(z_\nu,b_\nu^Tt)b_\nu b_\nu^T$ to $L$; the unpaired term
contributes $\tfrac12\Phi(\pi,b_0^Tt)b_0b_0^T$.
If the block arguments are affine in the original gaps, then for every
fixed vector $Z$, $Z^TLZ$ is concave in those gaps. At a full angular
stationary point, $Lt=0$.
\end{lemma}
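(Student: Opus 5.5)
The proof is a direct computation block by block, using the fundamental theorem of calculus along the segment $\lambda\mapsto\lambda t$. Fix $s$ and write $\mathcal U_s(t)$ for the restricted potential. Defect-independent terms contribute nothing to $\nabla_t$ or $D_t^2$, so only the paired blocks and the unpaired term matter. For a paired block $B_\nu(t)=J_{z_\nu}(b_\nu^Tt)$, the chain rule and \eqref{eq:J} give
\[
\nabla_tB_\nu=J'_{z_\nu}(b_\nu^Tt)\,b_\nu=2\Phi(z_\nu,b_\nu^Tt)\,(b_\nu^Tt)\,b_\nu=2\Phi(z_\nu,b_\nu^Tt)\,b_\nu b_\nu^T t .
\]
For the unpaired term, $\tfrac{d}{dh}\widehat W(\pi+h)=f(\pi+h)$. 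Since $f(\pi)=0$ and $f(2\pi-t)=-f(t)$, $f$ is odd about $\pi$, so $f(\pi+h)=\tfrac12\bigl(f(\pi+h)-f(\pi-h)\bigr)=h\Phi(\pi,h)$. This gives the factor $\tfrac12$ relative to a paired block, and summing proves \eqref{eq:kernel} with the stated contributions to $L$.

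For the integral formula \eqref{eq:average}, use the representation $\Phi(z,h)=\int_0^1 g(z+\lambda h)\,d\lambda$ for $h\ge0$, with the analogous form for $h<0$ by evenness. This follows from \eqref{eq:phi} by the substitution $v\mapsto$ symmetric halves: $\tfrac12\int_{-1}^1 g(z+hv)\,dv=\tfrac12\int_0^1\bigl(g(z+\lambda h)+g(z-\lambda h)\bigr)d\lambda$. Meanwhile $D_t^2B_\nu(\lambda t)=\bigl(g(z_\nu+\lambda h)+g(z_\nu-\lambda h)\bigr)b_\nu b_\nu^T$ with $h=b_\nu^Tt$. Hence $\tfrac12\int_0^1D_t^2B_\nu(\lambda t)\,d\lambda=\Phi(z_\nu,h)\,b_\nu b_\nu^T$. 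The unpaired term works the same way with $D_t^2=g(\pi+\lambda h)b_0b_0^T$ and $g$ symmetric about $\pi$. Convexity of the domain and the interval condition stated after \eqref{eq:J} ensure the segment stays collision-free, so everything is smooth. The bound $L(s,t)\preceq L(s,0)$ follows termwise from $\Phi(z,h)\le g(z)=\Phi(z,0)$ in \eqref{eq:phi-properties}, because each $b_\nu b_\nu^T\succeq0$.

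Concavity is the step needing care, since both $z_\nu$ and $h_\nu=b_\nu^Tt$ must be viewed as functions of the original gaps. For fixed $Z$,
\[
Z^TLZ=\sum_\nu\Phi(z_\nu,h_\nu)\,(b_\nu^TZ)^2+\tfrac12\Phi(\pi,h_0)(b_0^TZ)^2 .
\]
The weights $(b_\nu^TZ)^2\ge0$ are constants. If $z_\nu$ and $h_\nu$ are affine in the gaps, each term is a jointly concave $\Phi$ from Lemma~\ref{lem:phi} composed with an affine map, hence concave. A nonnegative combination of concave functions is concave. The main point to check is that the affine map lands in the convex domain $|h|<\min\{z,2\pi-z\}$ where joint concavity holds, which is again the interval condition.

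Finally, at a full angular stationary point all $F_i=0$, so $\nabla_\theta\mathcal U=0$. The defects are linear functions of the gaps, and the map from $(s,t)$ to the angles modulo rotation is linear and invertible, so $\nabla_t\mathcal U_s=0$. Then $2L(s,t)t=0$ by \eqref{eq:kernel}.
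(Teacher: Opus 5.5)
Your proof is correct and is essentially the paper's argument written out in full. The shared steps are: termwise use of $J'_z(h)=2h\Phi(z,h)$; oddness of $f$ about $\pi$ for the factor $\tfrac12$; the symmetric integral $\tfrac12\int_0^1\{g(z+\lambda h)+g(z-\lambda h)\}\dd\lambda=\Phi(z,h)$; the bound $\Phi\le g$ with positive semidefinite rank-one multipliers; joint concavity composed with affine maps; and restricted stationarity. There are two harmless slips. First, the preliminary representation $\Phi(z,h)=\int_0^1g(z+\lambda h)\dd\lambda$ is false for general $z$, since that integral is the one-sided quotient $(f(z+h)-f(z))/h$; the symmetric formula you actually use is right. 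Second, in the last step the chain rule alone gives $\nabla_t\mathcal U_s=0$, so the coordinate map (which is affine, not linear) need not be invertible.
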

\begin{proof}
Differentiate \eqref{eq:J} term by term. For the unpaired term, oddness
of $f$ about $\pi$ gives
$f(\pi+h)=h\Phi(\pi,h)$, which explains the factor $1/2$.
For a paired block its integrated Hessian coefficient is
\[
 \frac12\int_0^1\{g(z+\lambda h)+g(z-\lambda h)\}\dd\lambda
 =\Phi(z,h).
\]
For the unpaired block, symmetry of $g$ gives the same formula with the
extra factor $1/2$. The segment to zero remains in the stated domain.
Each matrix multiplier is positive semidefinite. Thus
\eqref{eq:phi-properties} gives the matrix upper comparison, and joint
concavity gives the assertion for fixed quadratic directions. Full
stationarity implies restricted stationarity, proving $Lt=0$.
\end{proof}

\begin{remark}[What is, and is not, a Hessian]
The integral identity is a matrix identity because of the block
calculation above; it is not inferred only from its action on the single
vector $t$. The endpoint Hessian need not equal $2L(s,t)$. If $t=Tp$
for an invertible linear change of defect coordinates, the corresponding
matrix is $T^TLT$. Congruence preserves inertia and nonsingularity, not
individual eigenvalues. The same distinction applies when a common rotation is fixed before
writing the matrix for an even number of satellites.
\end{remark}

\subsection{The negative-subspace principle}
\begin{lemma}[One remaining eigenvalue]\label{lem:inertia}
Let $L$ be a real symmetric $d\times d$ matrix which is strictly negative
on a fixed $(d-1)$-dimensional subspace. If $Z^TLZ>0$ for some $Z$, then
$L$ has $d-1$ negative eigenvalues and one positive eigenvalue, and is
invertible. A negative-definite upper bound $L\preceq U\prec0$ also
proves invertibility. In either case, $Lt=0$ forces $t=0$.
\end{lemma}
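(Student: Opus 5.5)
The first assertion follows from the Courant--Fischer min-max characterization of eigenvalues. Let $V$ be the fixed $(d-1)$-dimensional subspace on which $L$ is strictly negative, so $v^TLv<0$ for all nonzero $v\in V$. Order the eigenvalues as $\lambda_1\le\cdots\le\lambda_d$. Then
\[
\lambda_{d-1}=\min_{\dim S=d-1}\ \max_{v\in S,\ |v|=1}v^TLv\le\max_{v\in V,\ |v|=1}v^TLv<0,
\]
where the last maximum is attained on the compact unit sphere of $V$ and is therefore strictly negative. Hence $\lambda_1,\ldots,\lambda_{d-1}<0$. Next, $\lambda_d=\max_{|v|=1}v^TLv\ge Z^TLZ/|Z|^2>0$; here $Z\neq0$ automatically, since $Z^TLZ>0$. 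So the inertia is exactly $(d-1)$ negative and one positive eigenvalue. In particular $0$ is not an eigenvalue, and $L$ is invertible.

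An elementary alternative avoids min-max. If $L$ had two nonnegative eigenvalues (counting zero), their eigenvectors would span a two-dimensional subspace on which $v^TLv\ge0$. This subspace meets $V$ nontrivially by dimension count, $2+(d-1)>d$, which contradicts strict negativity on $V$. Thus at most one eigenvalue is $\ge0$, and the vector $Z$ forces that one eigenvalue to be strictly positive.

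For the second assertion, $L\preceq U\prec0$ means $v^TLv\le v^TUv<0$ for every $v\ne0$. Hence $L$ is negative definite and therefore invertible. In both cases, an invertible $L$ with $Lt=0$ gives $t=L^{-1}0=0$.

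The only point needing care is the strictness in the first case. One must use a compact unit sphere, or equivalently the fact that a symmetric matrix strictly negative on $V$ restricts to a negative-definite form on $V$, so that ``strictly negative'' yields a strictly negative eigenvalue rather than merely a nonpositive one. The rest is routine linear algebra. When the lemma is applied after a change of defect coordinates $t=Tp$, the Remark following Lemma~\ref{lem:avg} shows that congruence preserves inertia, so the conclusion transfers.
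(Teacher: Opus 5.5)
Your proof is correct and follows the same route as the paper: the variational (Courant--Fischer) characterization gives $d-1$ negative eigenvalues from the negative subspace, the vector $Z$ supplies the remaining positive eigenvalue, and the comparison $v^TLv\le v^TUv<0$ handles the negative-definite upper bound. Your dimension-count alternative and your remark that strictness follows from compactness of the unit sphere are sound, but they do not change the argument.
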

\begin{proof}
By the variational characterization of eigenvalues, negativity on the
stated subspace gives at least $d-1$ negative eigenvalues. A positive
quadratic direction supplies a positive eigenvalue. All $d$ eigenvalues have therefore been accounted for, and none is zero. The assertion for a
negative-definite matrix upper bound follows directly from its quadratic form.
\end{proof}

\subsection{Scalar estimates for negativity on the chosen subspaces}
The coefficients six and seven arise in the quadratic-form estimates
for different reflection types. Both inequalities will be used. The first
provides the explicit negative margin needed for seven satellites.
\begin{lemma}[A strengthened scalar inequality]\label{lem:six}
For every $0<t<\pi$,
\begin{equation}\label{eq:six}
 g(t)+6g(\pi-t)\leq-\delta_6,\qquad \delta_6=\frac{309}{608}.
\end{equation}
Consequently
\begin{equation}\label{eq:two}
 g(t)+2g(\pi-t)\leq-\kappa,
 \qquad \kappa=\frac{3\delta_6}{7}=\frac{927}{4256}.
\end{equation}
\end{lemma}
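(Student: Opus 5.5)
The plan is to reduce \eqref{eq:six} to a one-variable polynomial inequality that can be certified exactly, and then to obtain \eqref{eq:two} from \eqref{eq:six} by taking a positive combination.

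\emph{Reduction of \eqref{eq:six}.} Put $s=\sin(t/2)$ and $u=\sin((\pi-t)/2)=\cos(t/2)$. Then $s,u\in(0,1)$ and $s^2+u^2=1$. Write $G(y)=-\tfrac1{4y^3}+\tfrac1{8y}-1+2y^2$, so that $g(t)=G(s)$ by \eqref{eq:g}. The claim becomes
\[
 H(t):=G(s)+6G(u)+\delta_6\leq0 .
\]
The boundary behaviour is harmless. As $t\to0^+$ we have $G(s)\to-\infty$ while $G(u)\to G(1)=7/8$. As $t\to\pi^-$ we have $6G(u)\to-\infty$ while $G(s)\to7/8$. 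At $t=\pi/2$ the value is $7g(\pi/2)=-21\sqrt2/8$, which is far below $-\delta_6$. So the supremum is an interior maximum. It should lie fairly close to $t=\pi$: there $g(t)$ is near $7/8$, and the large negative term $6g(\pi-t)$ must just compensate it.

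To make the inequality algebraic, use the rational parametrization $s=2m/(1+m^2)$, $u=(1-m^2)/(1+m^2)$, with $m=\tan(t/4)\in(0,\tan(\pi/4))=(0,1)$. Multiplying $H\leq0$ by the positive factor $8s^3u^3(1+m^2)^k$, with $k$ chosen to clear all denominators, turns it into $P(m)\geq0$ on $[0,1]$. Here $P$ is an explicit integer polynomial, because $\delta_6=309/608$ is rational. The endpoint limits above show that $P>0$ at $m=0$ and $m=1$.

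\emph{Certification.} Strict positivity of $P$ on $[0,1]$ is then a finite exact check. I would use a Sturm sequence, or Descartes' rule after the substitution $m=1/(1+y)$, to show that $P$ has no root in $[0,1]$. Equivalently, one can use a rational subdivision of $[0,1]$ together with Bernstein coefficients, or exact interval evaluation of $P$ and a lower bound on $P'$.

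\emph{Main obstacle.} The difficulty is the sharpness of the constant. Numerically the maximum of $g(t)+6g(\pi-t)$ sits only slightly below $-309/608$, so $P$ has a shallow positive minimum near the maximizer. A coarse Bernstein subdivision will fail there. I would first locate the maximizer numerically and then subdivide finely only around it. If the degree of $P$ makes Bernstein expansions unwieldy, Sturm's theorem with exact rational arithmetic is the fallback, since it gives a clean root count.

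\emph{The consequence \eqref{eq:two}.} Replacing $t$ by $\pi-t$ in \eqref{eq:six} gives $A:=g(t)+6g(\pi-t)\leq-\delta_6$ and also $B:=g(\pi-t)+6g(t)\leq-\delta_6$ for all $0<t<\pi$. Now look for $\alpha,\beta\geq0$ with $\alpha A+\beta B=g(t)+2g(\pi-t)$. This requires $\alpha+6\beta=2$ and $6\alpha+\beta=1$, whose solution is $\alpha=4/35$, $\beta=11/35$. Then
\[
 g(t)+2g(\pi-t)=\tfrac4{35}A+\tfrac{11}{35}B\leq-\tfrac{15}{35}\delta_6=-\tfrac{3\delta_6}{7}=-\kappa .
\]
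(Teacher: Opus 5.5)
Your reduction of \eqref{eq:six} is valid in principle, but it takes a different and heavier route than the paper. The paper also works in one variable, $x=\sin^2(t/2)$, where
\[
 g(t)+6g(\pi-t)=5-10x-\frac{2-x}{8x^{3/2}}-\frac{6(1+x)}{8(1-x)^{3/2}}.
\]
For $x\geq1/2$ this is crudely at most $-5/4$. For $x\leq1/2$ the paper replaces the convex functions $x^{-3/2}$ and $(1-x)^{-3/2}$ by their tangent lines $20-48x$ (at $x=1/4$) and $1+\tfrac32x$ (at $x=0$). What remains is the quadratic $-6+21x-57x^2=-57(x-7/38)^2-309/76$. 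This takes a few lines of hand-checkable algebra, and it explains the odd constant: $309/608$ is the maximum of this relaxation, not of the true function.

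Your rationalization $m=\tan(t/4)$, followed by a Sturm or Bernstein certificate for the resulting integer polynomial (degree $16$ with $k=8$), would also succeed, because $P$ is strictly positive on $[0,1]$. It has the advantage of being a mechanical decision procedure that could certify sharper constants. But as written, the certificate (the polynomial and its root count) is only promised, not produced. It also replaces a short analytic argument by an exact computation.

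Two statements in your write-up are wrong, although neither breaks the method.

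First, your picture of the extremum is backwards. The weight $6$ sits on $g(\pi-t)$, which is large when $t$ is small, and $6g(\pi-t)\to-\infty$ as $t\to\pi^-$. The maximum of $g(t)+6g(\pi-t)$ is about $-0.67$, attained near $t\approx1.0$ (where $\sin^2(t/2)\approx0.23$). So the margin below $-\delta_6\approx-0.508$ is about $0.16$, and no delicate local subdivision is needed.

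Second, the linear system in your derivation of \eqref{eq:two} is transposed. Since $\alpha A+\beta B=(\alpha+6\beta)g(t)+(6\alpha+\beta)g(\pi-t)$, you need $\alpha+6\beta=1$ and $6\alpha+\beta=2$. This gives $\alpha=11/35$ and $\beta=4/35$, which are the paper's weights. Your combination $\tfrac4{35}A+\tfrac{11}{35}B$ actually equals $2g(t)+g(\pi-t)$. Because $\alpha+\beta=15/35$ in both cases and $t\mapsto\pi-t$ preserves $(0,\pi)$, the bound $-\kappa$ survives. Still, the displayed identity is false as written.
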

\begin{proof}
Put $x=\sin^2(t/2)\in(0,1)$. Formula \eqref{eq:g} gives
\[
 g(t)+6g(\pi-t)=5-10x-\frac{2-x}{8x^{3/2}}
                      -\frac{6(1+x)}{8(1-x)^{3/2}}.
\]
For $x\geq1/2$, the linear term is nonpositive, and the two subtracted
fractions are at least $1/8$ and $9/8$. The expression is therefore at most
$-5/4<-\delta_6$.

For $0<x\leq1/2$, the convexity bounds
\[
 x^{-3/2}\geq20-48x,\qquad (1-x)^{-3/2}\geq1+\frac32x
\]
give
\begin{align*}
 8\{g(t)+6g(\pi-t)\}
 &\leq40-80x-(2-x)(20-48x)-6(1+x)(1+3x/2)\\
 &=-6+21x-57x^2\\
 &=-57\left(x-\frac7{38}\right)^2-\frac{309}{76}.
\end{align*}
This proves \eqref{eq:six}. Finally, eleven times \eqref{eq:six} at $t$
plus four times the same inequality at $\pi-t$, divided by $35$, gives
\eqref{eq:two}.
\end{proof}

\begin{lemma}\label{lem:seven}
For every $0<t<\pi$,
\begin{equation}\label{eq:seven}
 g(t)+7g(\pi-t)<-\frac14.
\end{equation}
Consequently
\begin{equation}\label{eq:three}
 g(t)+3g(\pi-t)<-\frac18.
\end{equation}
\end{lemma}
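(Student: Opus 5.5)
The plan is to follow the proof of Lemma~\ref{lem:six}. Put $x=\sin^2(t/2)\in(0,1)$, so that $\sin^2((\pi-t)/2)=1-x$. Formula \eqref{eq:g} then gives
\[
 g(t)+7g(\pi-t)=6-12x-\frac{2-x}{8x^{3/2}}-\frac{7(1+x)}{8(1-x)^{3/2}}.
\]
This reduces \eqref{eq:seven} to a one-variable inequality on $(0,1)$, which I would split at $x=1/2$.

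\emph{Case $x\geq1/2$.} The polynomial part $6-12x$ is nonpositive. The first fraction is at least $1/8$, since $(2-x)x^{-3/2}\geq1$ on $[1/2,1)$. The second fraction is at least $\tfrac78\cdot\tfrac32\cdot 2^{3/2}>7/8$. So the expression is below $-1<-1/4$ with a wide margin.

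\emph{Case $0<x\leq1/2$.} This is the step I expect to be the main obstacle. The convexity bounds used for Lemma~\ref{lem:six}, namely $x^{-3/2}\geq20-48x$ and $(1-x)^{-3/2}\geq1+\tfrac32x$, are not strong enough here. They give only
\[
8\{g(t)+7g(\pi-t)\}\leq1+\tfrac52x-\tfrac{117}{2}x^2,
\]
which is positive near $x=0$. The extra unit coming from the coefficient $7$ is no longer absorbed. The true expression still tends to $-\infty$ as $x\to0$, so the difficulty is only that the linear minorant of $x^{-3/2}$ is too crude.

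I would first try replacing it by tangent-line minorants of the convex function $x^{-3/2}$ at one or two points $x_0\in(0,1/2]$, chosen on subintervals. These have the form
\[
x^{-3/2}\geq x_0^{-3/2}-\tfrac32x_0^{-5/2}(x-x_0).
\]
The term $(1-x)^{-3/2}$ would be handled by the same bound as before, or by a tangent at $x=1/2$. On each subinterval this leaves an explicit quadratic in $x$. Its maximum must be shown to be below $-2$, which is $8\cdot(-\tfrac14)$, by completing the square as in Lemma~\ref{lem:six}. On a subinterval $(0,x_1]$ with $x_1$ small, the crude bound $-(2-x)/(8x^{3/2})\leq-(2-x_1)/(8x_1^{3/2})$ alone already dominates, so only finitely many quadratic checks remain. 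The concrete choices are a split point $x_1$ near $0.1$ and a tangent point near $0.25$; these are to be tuned so each quadratic has a rational negative maximum.

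\emph{Consequence.} Adding five times \eqref{eq:seven} at $t$ to once \eqref{eq:seven} at $\pi-t$ gives
\[
12g(t)+36g(\pi-t)<-\tfrac64,
\]
because the coefficients are $5+7=12$ and $35+1=36$. Dividing by $12$ yields \eqref{eq:three}.
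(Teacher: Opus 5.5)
\textbf{There is a genuine gap in the middle case.} Your reduction to
\[
h(x)=8\{g(t)+7g(\pi-t)\}=48-96x-(2-x)x^{-3/2}-7(1+x)(1-x)^{-3/2}
\]
is correct. So are the case $x\ge 1/2$, the crude bound on $(0,x_1]$ with $x_1\approx 0.1$, and the derivation of \eqref{eq:three}; the weights $5/12$ and $1/12$ are the ones the paper uses. The gap is on $[x_1,1/2]$, and your diagnosis of it is incomplete: the minorant of $x^{-3/2}$ is not the only one that is too crude.

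The true maximum of $h$ is about $-2.76$, attained near $x\approx 0.218$. So the margin below $-2$ is only about $0.76$. At $x=0.22$, the linear minorant $(1-x)^{-3/2}\ge 1+\tfrac32x$ alone costs
\[
7(1+x)\bigl[(1-x)^{-3/2}-1-\tfrac32x\bigr]\approx 1.04 .
\]
Even if you keep the \emph{exact} value of $x^{-3/2}$, you get $26.88-17.25-11.36\approx -1.73>-2$ there. Your alternative, the tangent at $x=1/2$, equals $\sqrt2(6x-1)\approx 0.45$ at $x=0.22$, against the true value $\approx 1.45$, so it is far worse. Since $2-x>0$, any minorant of $x^{-3/2}$ only increases your bound. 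Hence no choice of tangent points or subintervals for $x^{-3/2}$ can rescue the case with either proposed treatment of $(1-x)^{-3/2}$.

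Your suggested tangent point is also ineffective. The tangent to $x^{-3/2}$ at $x_0=1/4$ is exactly $20-48x$, the old bound you showed fails. On $[0.1,1/2]$ it fails at $x=0.2$ even with the exact $(1-x)^{-3/2}$: you get $28.8-18.72-11.74\approx-1.66$.

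\textbf{What is missing:} a sharper minorant for $(1-x)^{-3/2}$, and a tangent point for $x^{-3/2}$ at the maximizer rather than near $0.25$. The paper uses the quadratic Taylor bound
\[
(1-x)^{-3/2}\ge 1+\tfrac32x+\tfrac{15}{8}x^2 ,
\]
valid on all of $(0,1)$ because the third derivative is positive. It pairs this with a single tangent to $x^{-3/2}$ at $x_0=(7/15)^2=49/225\approx 0.218$. Both minorants are global, so no case split at $x=1/2$ or near $x=0$ is needed. One obtains
\[
h\le -Ax^2+Bx+C-\tfrac{105}{8}x^3
\]
on $(0,1)$. Dropping the cubic and completing the square gives $C+B^2/(4A)\approx-2.247<-2$.
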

\begin{proof}
Put $x=\sin^2(t/2)\in(0,1)$. Formula \eqref{eq:g} gives
\[
 8\{g(t)+7g(\pi-t)\}
 =48-96x-\frac{2-x}{x^{3/2}}-\frac{7(1+x)}{(1-x)^{3/2}}.
\]
For $r=7/15$, convexity and the quadratic Taylor lower bound imply
\[
 x^{-3/2}\geq\frac{5}{2r^3}-\frac{3x}{2r^5},\qquad
 (1-x)^{-3/2}\geq1+\frac32x+\frac{15}{8}x^2.
\]
The second bound holds on all of $(0,1)$ because the third derivative
of $(1-x)^{-3/2}$ is positive. Substitution yields
\[
 8\{g(t)+7g(\pi-t)\}
 \leq -Ax^2+Bx+C-\frac{105}{8}x^3,
\]
where
\[
 A=\frac{12289023}{134456},\qquad B=\frac{783968}{16807},\qquad
 C=-\frac{2812}{343}.
\]
Completing the square, and dropping the nonpositive cubic term, gives
\[
 -Ax^2+Bx+C\leq C+\frac{B^2}{4A}
 =-\frac{464068251076}{206541609561}<-2.
\]
All constants and the polynomial expansion are checked by exact rational
arithmetic in the supplementary source. This proves \eqref{eq:seven}.
Taking $5/12$ of \eqref{eq:seven} at $t$ and $1/12$ of the same inequality
at $\pi-t$ gives \eqref{eq:three}.
\end{proof}

For use with six satellites we also record the consequence
\begin{equation}\label{eq:fivepair}
 g(t)+5g(\pi-t)\leq-\frac67\delta_6<0,
 \qquad g(t)+g(\pi-t)\leq2g(\pi/2)<0.
\end{equation}
The first follows by taking $29/35$ of \eqref{eq:six} at $t$ and
$1/35$ at $\pi-t$; the second is midpoint concavity.
These weighted consequences avoid separate proofs of the same scalar
sign in each satellite case.

\section{Collision separation and compact domains}\label{sec:compactness}
Before carrying out finite verification, we need an explicit positive lower
bound for every cyclic gap of a solution. Such bounds place all possible
solutions in compact regions away from collisions. The argument is the same
for all four satellite numbers, but each verification domain uses its own
numerical lower bound. We prove the common argument first and then give the
bounds needed in each case.

\subsection{A common propagation lemma}
The sign of \eqref{eq:f} gives $f>0$ on $(0,\pi/3)$ and
$-1<f<0$ on $(\pi/3,\pi)$. Consequently $f(t)>-1$ for
$0<t\leq5\pi/3$. Also $g(t)<g(\pi/3)=-9/4$ on $(0,\pi/3)$,
so $f$ is strictly decreasing there.

\begin{lemma}[Short-gap propagation]\label{lem:propagation}
Let $N\geq4$, put $m=N-2$, and suppose \eqref{eq:cc} holds.
If $x_i<\pi/3$ and $f(x_i)>m$, then
\begin{equation}\label{eq:propagation}
 x_{i-1}<\pi/3,\qquad f(x_{i-1})>f(x_i)/m.
\end{equation}
\end{lemma}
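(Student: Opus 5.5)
The plan is to read off the balance equation $F_i=0$ at satellite $i$, the trailing endpoint of the short gap $x_i$, and to bound every term other than the two neighbouring interactions from below. With satellites in positive cyclic order, the directed separation from $i$ to $i+1$ is $x_i$, which contributes $f(x_i)>m$. The separation from $i$ to $i-1$ is $2\pi-x_{i-1}$, which contributes $f(2\pi-x_{i-1})=-f(x_{i-1})$. The remaining $N-3=m-1$ satellites $j$ lie strictly on the forward arc from $i+1$ to $i-1$. Hence each $\theta_{ij}$ lies strictly between $x_i$ and $2\pi-x_{i-1}$, and the backward arc $2\pi-\theta_{ij}$ from $i$ to $j$ strictly contains the gap $x_{i-1}$, so
\[
 2\pi-\theta_{ij}>x_{i-1}.
\]
This gives
\[
 0=F_i=f(x_i)-f(x_{i-1})+\sum_{j\ne i,i\pm1}f(\theta_{ij}).
\]

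\emph{Step 1: $x_{i-1}<\pi/3$.} Suppose instead $x_{i-1}\geq\pi/3$. Then every $\theta_{ij}$ lies in $(0,5\pi/3]$, so each of the $m-1$ remaining terms exceeds $-1$ by the sign facts recorded before the lemma. Next we bound $f(x_{i-1})<1$ for $x_{i-1}\in[\pi/3,2\pi)$. On $[\pi/3,\pi]$ we have $f\in(-1,0]$. On $(\pi,5\pi/3)$, oddness about $\pi$ gives $f(x)=-f(2\pi-x)\in(0,1)$. On $[5\pi/3,2\pi)$ we have $f\leq0$. Therefore
\[
 F_i>m-1-(m-1)=0,
\]
a contradiction.

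\emph{Step 2: the lower bound.} Now $x_{i-1}<\pi/3$, and we bound each remaining term below by $-\max\{1,f(x_{i-1})\}$. If $\theta_{ij}\leq5\pi/3$, then $f(\theta_{ij})>-1$. If $\theta_{ij}>5\pi/3$, then $x_{i-1}<2\pi-\theta_{ij}<\pi/3$. Since $f$ is strictly decreasing on $(0,\pi/3)$, this gives
\[
 -f(\theta_{ij})=f(2\pi-\theta_{ij})<f(x_{i-1}).
\]
Put $M=\max\{1,f(x_{i-1})\}$. The balance equation then yields
\[
 f(x_{i-1})>f(x_i)-(m-1)M.
\]
If $M=1$, this gives $f(x_{i-1})>m-(m-1)=1$, contradicting $M=1$. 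Hence $M=f(x_{i-1})$, and the inequality becomes $m f(x_{i-1})>f(x_i)$, which is \eqref{eq:propagation}.

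The only delicate point is the geometric bookkeeping. We must justify that the backward arc from $i$ to any other satellite strictly contains $x_{i-1}$, and that the monotonicity of $f$ is applied only inside $(0,\pi/3)$; it is this strict containment that makes the bound strict. We also need the case analysis showing $f<1$ on $[\pi/3,2\pi)$ in Step 1. Everything else is the sign information on $f$ already recorded before the lemma.
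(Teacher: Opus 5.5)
Your proof is correct and follows essentially the same route as the paper. Both arguments use the balance equation $F_i=0$ at satellite $i$, the bound $f>-1$ for separations at most $5\pi/3$, and strict monotonicity of $f$ on $(0,\pi/3)$ to compare the reflected separations exceeding $5\pi/3$ with $x_{i-1}$; your Step 1 is the paper's case $r=0$. The only difference is bookkeeping: the paper averages over the set $J$ and treats $r<m$ and $r=m$ separately, whereas you bound every remaining term by $-\max\{1,f(x_{i-1})\}$ and get strictness from $2\pi-\theta_{ij}>x_{i-1}$, which avoids that case split.
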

\begin{proof}
List the forward separations from the satellite before $x_i$ as
$a_1=x_i<a_2<\cdots<a_{N-1}=2\pi-x_{i-1}$. Their forces sum to zero.
Let $J=\{k\in\{2,\ldots,N-1\}:a_k>5\pi/3\}$ and $r=|J|$.
If $r=0$, all $m$ other terms exceed $-1$, contradicting $f(x_i)>m$.
Thus $J$ is a nonempty terminal segment. Put $y_k=2\pi-a_k$ for
$k\in J$. Its smallest member is $y_{N-1}=x_{i-1}$, and all members
are distinct and belong to $(0,\pi/3)$.

If $r<m$, the nonempty complementary sum gives
\[
 \sum_{k\in J}f(y_k)>f(x_i)-(m-r),\qquad
 \frac{f(x_i)-(m-r)}r-\frac{f(x_i)}m
 =\frac{(m-r)(f(x_i)-m)}{rm}>0.
\]
The largest term, $f(x_{i-1})$, is at least their average and hence
strictly larger than $f(x_i)/m$.
If $r=m$, the relation is instead the equality
$\sum_{k\in J}f(y_k)=f(x_i)$. There are $m\geq2$ distinct $y_k$,
so strict decrease of $f$ makes $f(x_{i-1})$ strictly larger than their
average. This separate case preserves strictness when the complementary
sum is empty.
\end{proof}

\begin{lemma}[A finite sequence of gap estimates]\label{lem:ladder}
Let $N\geq4$. Suppose $0<a_0<\cdots<a_{N-1}<\pi/3$, $\sum_j a_j<2\pi$, and
\begin{equation}\label{eq:ladderconditions}
 f(a_j)>(N-2)f(a_{j+1})\quad(0\leq j<N-1),
 \qquad f(a_{N-2})>N-2.
\end{equation}
Then every angular solution has $x_i>a_0$ for every $i$.
\end{lemma}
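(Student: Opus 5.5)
We argue by contradiction, assuming some gap satisfies $x_i\leq a_0$, and walk backwards around the circle with Lemma~\ref{lem:propagation}. Set $m=N-2\geq2$ as in that lemma. The strategy is to show that every gap encountered is forced below the next ladder value $a_k$. Then all $N$ gaps are bounded by the $a_k$, and the total angle is less than $\sum_j a_j<2\pi$. This contradicts $\sum_i x_i=2\pi$ from \eqref{eq:gaps}.

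\emph{Preliminary step.} We check that $f(a_k)>m$ for every $0\leq k\leq N-2$. For $k=N-2$ this is an assumption in \eqref{eq:ladderconditions}. For smaller $k$ we use downward induction: if $f(a_{k+1})>m\geq2>1$, then \eqref{eq:ladderconditions} gives $f(a_k)>m f(a_{k+1})>m$. We also use repeatedly that $f$ is strictly decreasing on $(0,\pi/3)$, as noted before Lemma~\ref{lem:propagation}. All $a_k$ lie in this interval, so for $y,a\in(0,\pi/3)$ the inequalities $y\leq a$ and $f(y)\geq f(a)$ are equivalent, and strict versions correspond.

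\emph{Inductive step.} The claim is that $x_{i-k}\leq a_k$ for $k=0,1,\ldots,N-1$, with indices cyclic and strict inequality for $k\geq1$. The case $k=0$ is the hypothesis. Suppose $x_{i-k}\leq a_k$ for some $k\leq N-2$. Then $x_{i-k}<\pi/3$ and $f(x_{i-k})\geq f(a_k)>m$, so Lemma~\ref{lem:propagation} applies at the gap $x_{i-k}$. It yields $x_{i-k-1}<\pi/3$ together with
\[
 f(x_{i-k-1})>\frac{f(x_{i-k})}{m}\geq\frac{f(a_k)}{m}>f(a_{k+1}),
\]
where the last inequality is \eqref{eq:ladderconditions}. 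By strict monotonicity of $f$ on $(0,\pi/3)$, this gives $x_{i-k-1}<a_{k+1}$.

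\emph{Conclusion.} The indices $i,i-1,\ldots,i-N+1$ run through all $N$ cyclic gaps exactly once. Summing the bounds gives $2\pi=\sum x<\sum_j a_j<2\pi$, a contradiction. Hence $x_i>a_0$ for every $i$.

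The only delicate point is the bookkeeping of the hypothesis $f(x)>m$ required by Lemma~\ref{lem:propagation}. The lemma is needed at steps $k=0,\ldots,N-2$, which is exactly the range in which the preliminary step gives $f(a_k)>m$. No condition on $f(a_{N-1})$ is needed, since the last gap $x_{i-N+1}$ is bounded by $a_{N-1}$ without applying the lemma at it.
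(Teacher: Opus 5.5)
Your proof is correct and follows the paper's own argument. You propagate backwards with Lemma~\ref{lem:propagation}, use strict monotonicity of $f$ on $(0,\pi/3)$ to turn the force bounds into $x_{i-k}<a_k$, and sum against $\sum_j a_j<2\pi$. The only cosmetic difference is how you obtain $f(a_k)>N-2$ for $k\leq N-2$: you use downward induction through the ladder inequalities, whereas the paper reads it off directly from monotonicity, $f(a_k)\geq f(a_{N-2})$.
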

\begin{proof}
If $x_i\leq a_0$, monotonicity and Lemma~\ref{lem:propagation} yield
successively $x_{i-j}<a_j$, $1\leq j<N$. Each application is legitimate
because the force at the current gap is above $f(a_j)\geq f(a_{N-2})$
when $j\leq N-2$. Summing the resulting inequalities contradicts
$\sum_i x_i=2\pi$.
\end{proof}

\subsection{Explicit lower bounds for the verification domains}
For $N=5,6,7$, use the following increasing sequences of rational angles,
each satisfying
\eqref{eq:ladderconditions}:
\begin{align}
 N=5:&\quad (1/10,9/50,8/25,21/40,4/5),\label{eq:ladder5}\\
 N=6:&\quad (1/40,11/200,23/200,6/25,23/50,3/4),\label{eq:ladder6}\\
 N=7:&\quad (1/160,3/200,7/200,2/25,9/50,2/5,4/5).
 \label{eq:ladder7}
\end{align}
Their sums are respectively $77/40$, $329/200$, and $1213/800$.
The eighteen comparisons are finite exact-rational checks. For
$0<t\leq4/5$, the alternating sine polynomials
\[
 s_-(t)=\sum_{k=0}^{5}\frac{(-1)^kt^{2k+1}}{(2k+1)!},\qquad
 s_+(t)=\sum_{k=0}^{4}\frac{(-1)^kt^{2k+1}}{(2k+1)!}
\]
satisfy $s_-(t)<\sin t<s_+(t)$. The successive absolute-term ratio is
at most $(4/5)^2/6<1$. At each angle in these sequences, the verification program first checks
$0<s_-(t/2)<s_+(t/2)$ and $(8s_+(t/2)^3)^{-1}-1>0$.
It can then bound $f(t)$ below and above by
\[
 s_-(t)\bigl((8s_+(t/2)^3)^{-1}-1\bigr),\qquad
 s_+(t)\bigl((8s_-(t/2)^3)^{-1}-1\bigr),
\]
respectively. Strict rational cross-multiplication verifies every
comparison. These lower bounds for the gaps do not require a search for
solutions of the angular equations.

For eight satellites, a single analytic estimate verifies all seven
comparisons between consecutive angles in the required sequence.

\begin{lemma}[Elementary small-angle bounds]\label{lem:smallangle}
For $0<t\leq1$,
\begin{equation}\label{eq:smallangle}
 t^{-2}-\frac18-t\leq f(t)\leq t^{-2}-\frac56t.
\end{equation}
For $0<t\leq2/5$, it follows that
\begin{equation}\label{eq:uniformladder}
 f(t)-6f(5t/2)\geq\frac1{25t^2}-\frac18+\frac{23}2t>\frac18.
\end{equation}
\end{lemma}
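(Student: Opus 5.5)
The plan is to prove the two-sided bound \eqref{eq:smallangle} by bounding the two factors of $f(t)=\sin t\,\bigl((2\sin(t/2))^{-3}-1\bigr)$ separately with alternating Taylor estimates. Then \eqref{eq:uniformladder} follows by substituting the lower bound at $t$ and the upper bound at $5t/2$. The substitution is legitimate because $5t/2\leq1$ when $t\leq2/5$.

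For \eqref{eq:smallangle}, write $u=t/2\in(0,1/2]$. Use $u-u^3/6\leq\sin u\leq u-u^3/6+u^5/120$, so that $(2\sin u)^{-3}=t^{-3}(1-t^2/24+\eta)^{-3}$ with $0\leq\eta\leq t^4/1920$. Expanding $(1-y)^{-3}\geq1+3y$ gives the lower bound $(2\sin u)^{-3}\geq t^{-3}(1+t^2/8-c\,t^4)$ with an explicit small $c$. Combining this with $\sin t\geq t-t^3/6$ yields
\[
 f(t)\geq t^{-2}+\tfrac18-\tfrac16-(\text{terms }O(t))-t+O(t^3).
\]
Here the constant $1/8-1/6=-1/24$ is already better than the required $-1/8$. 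The $-1$ contribution, after multiplying by $\sin t\leq t$, gives the $-t$. The slack $1/12$ absorbs all higher-order terms on $(0,1]$, since each is bounded by a constant times $t^2$ with small coefficients.

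For the upper bound, use $\sin t\leq t-t^3/6+t^5/120$ and $(2\sin u)^{-3}\leq t^{-3}(1-t^2/24)^{-3}$. Bound $(1-y)^{-3}\leq1+3y+Ky^2$ for $y\leq1/24$, using an explicit $K$ slightly above $6$. The product then gives $t^{-2}-1/24+O(t^2)$. On the other side, $-\sin t\leq -t+t^3/6$. Hence $f(t)\leq t^{-2}-t-1/24+Ct^2+t^3/6$. Since $-t+t/6=-5t/6$ and $Ct^2-1/24\leq t/6$ fails near $t=1$ unless handled carefully, I would check instead that $-\tfrac1{24}+Ct^2+\tfrac{t^3}{6}\leq\tfrac t6$ holds on $(0,1]$. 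This is a cubic polynomial inequality with rational coefficients, verifiable by exact arithmetic; its endpoint at $t=1$ is the binding case.

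Then \eqref{eq:uniformladder} is immediate. Indeed,
\[
 f(t)-6f(5t/2)\geq t^{-2}-\tfrac18-t-6\bigl(\tfrac4{25t^2}-\tfrac{25}{12}t\bigr)=\tfrac1{25t^2}-\tfrac18+\tfrac{23}2 t,
\]
using $1-24/25=1/25$ and $-1+25/2=23/2$. On $(0,2/5]$ the right side minus $1/8$ is $h(t)=\tfrac1{25t^2}-\tfrac14+\tfrac{23}{2}t$. It attains its minimum where $h'(t)=0$, that is, at $t^3=4/575$, so $t\approx0.19$. There $h\approx1.1+2.2-0.25>0$, and the inequality is strict.

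The main obstacle is the upper bound in \eqref{eq:smallangle} near $t=1$, where the margin is smallest. There I would keep one more Taylor term in $\sin u$ and verify the final polynomial inequality on $(0,1]$ exactly.
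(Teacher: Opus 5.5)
Your proof is correct, but it takes a more computational route than the paper. You bound $\sin t$ and $(2\sin(t/2))^{-3}$ separately, using alternating Taylor polynomials and a tangent or quadratic bound for $(1-y)^{-3}$. You then have to absorb the resulting $t^2,t^4,t^6$ corrections, with constants such as $c=1/640$ and $K\approx6.45$, through a final polynomial check on $(0,1]$.

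The paper first cancels a factor: with $u=t/2$, $f(t)=\cos u/(4\sin^2u)-\sin t$. For the lower bound, $\cos u\geq1-u^2/2$, $\sin u\leq u$ and $\sin t\leq t$ finish it at once. The constant $-1/8$ arises exactly as $-(u^2/2)/(4u^2)$. For the upper bound, the concave-chord estimate $\sin u/u\geq(1+\cos u)/2$ has a square that dominates $\cos u$. This gives $\cos u/(4\sin^2u)\leq t^{-2}$ with no remainder at all, and $\sin t\geq t-t^3/6\geq5t/6$ supplies the linear term.

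Your approach yields sharper intermediate constants (for example $-1/24$ in place of $-1/8$), which the ladder does not need. The paper's approach needs no error bookkeeping. For the strict inequality in \eqref{eq:uniformladder}, your minimization of $h$ works. It becomes exact once you note that $1/(25t^2)=23t/4$ at the critical point. But it suffices, as in the paper, to use $1/(25t^2)\geq1/4$ on $(0,2/5]$.

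Two small slips. The corrections in your lower bound are $O(t^2)$, not $O(t)$. Also, $t=1$ is not a tight case for the upper bound: with your truncations the margin there is about $0.04$.
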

\begin{proof}
Put $u=t/2$. Since $f(t)=\cos u/(4\sin^2u)-\sin t$,
$\cos u\geq1-u^2/2>0$, $\sin u\leq u$, and $\sin t\leq t$ give
the lower bound. Integrating the chord of the concave cosine on $[0,u]$
gives $\sin u/u\geq(1+\cos u)/2$. Its square is at least $\cos u$,
so $\cos u/(4\sin^2u)\leq t^{-2}$. Also
$\sin t\geq t-t^3/6\geq5t/6$, proving the upper bound.
Apply these bounds at $t$ and $5t/2\leq1$. Finally
$1/(25t^2)\geq1/4$ proves the strict last inequality.
\end{proof}

\begin{proposition}[Explicit gap bounds for the verification domains]\label{prop:collars}
Every collision-free angular solution satisfies the corresponding bound
in the following table.
\begin{center}
\begin{tabular}{ccc}
\toprule
$N$ & Gap bound in radians & Bound used for finite verification\\
\midrule
$5$ & $x_i>1/10$ & Classification uses $0<\delta<1/10$ in radians\\
$6$ & $x_i>1/40$ & $R_i>1/128$\\
$7$ & $x_i>1/160$ & $R_i>1/512$\\
$8$ & $x_i>1/640$ & $R_i>1/2048$\\
\bottomrule
\end{tabular}
\end{center}
\end{proposition}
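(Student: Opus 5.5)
The plan is to derive every entry of the table from Lemma~\ref{lem:ladder}, applied to an explicit increasing sequence of angles. The third column is then only a conversion to $R_i=x_i/\pi$.

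For $N=5,6,7$ we take the sequences \eqref{eq:ladder5}, \eqref{eq:ladder6}, \eqref{eq:ladder7}. For each we check three things. First, the entries increase and lie below $\pi/3$; the largest is $4/5<1.047$. Second, the sums $77/40$, $329/200$, $1213/800$ are below $2\pi$. Third, the conditions \eqref{eq:ladderconditions} hold: $f(a_j)>(N-2)f(a_{j+1})$ for $j<N-1$, together with $f(a_{N-2})>N-2$. These comparisons are done with the rational enclosures of $f$ built from $s_\pm$. One strict cross-multiplication per inequality suffices, using the lower enclosure on the larger side and the upper enclosure on the smaller side. Lemma~\ref{lem:ladder} then gives $x_i>a_0$, which is $1/10$, $1/40$, $1/160$ respectively.

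For $N=8$ we need $m=6$, so eight angles $a_0<\dots<a_7<\pi/3$. We choose a geometric ladder with ratio $5/2$, namely $a_j=(5/2)^j/640$. Then $a_6=(5/2)^6/640\approx0.381$ and $a_7\approx0.954<\pi/3$. The consecutive comparisons $f(a_j)>6f(a_{j+1})$ for $j\le 6$ follow from \eqref{eq:uniformladder}, since $a_j\le a_6\le 2/5$ for these $j$. That inequality yields $f(t)-6f(5t/2)>1/8>0$; when $f(5t/2)$ is negative the comparison is even easier, and \eqref{eq:uniformladder} covers it as stated.

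Two further checks remain for $N=8$:
\begin{itemize}
\item The condition $f(a_6)>6$ follows from \eqref{eq:smallangle}: $a_6^{-2}-1/8-a_6\approx6.89-0.125-0.38>6$. I expect this to be the tight point, so it is verified rationally with a rational upper bound for $a_6$.
\item The sum $\sum a_j=((5/2)^8-1)/(640\cdot 3/2)\approx1.59<2\pi$.
\end{itemize}
Lemma~\ref{lem:ladder} then gives $x_i>1/640$. If the margin at $f(a_6)>6$ fails, we would shrink the top angles slightly; the ratio $5/2$ is only needed for $j\le5$.

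For the third column, $x_i>1/40$ gives $R_i>1/(40\pi)>1/128$, because $40\pi<128$. In the same way $160\pi<512$ and $640\pi<2048$, since $\pi<3.2$. For $N=5$ the column only records that the classification works with $\delta<1/10$, which is immediate.

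The main obstacle is purely bookkeeping: certifying the enclosures of $f$ rigorously at the rational angles, and the tight check $f(a_6)>6$ for $N=8$.
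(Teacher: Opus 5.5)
Your proposal is correct and matches the paper's proof: Lemma~\ref{lem:ladder} with the rational ladders \eqref{eq:ladder5}--\eqref{eq:ladder7} for $N=5,6,7$; for $N=8$ the geometric ladder $a_j=(5/2)^j/640$, with \eqref{eq:uniformladder} giving the seven comparisons and \eqref{eq:smallangle} giving $f(a_6)>6$; and $\pi<16/5$ for the normalized column. The paper obtains $f(a_6)>6+651/2600$ from the bound $a_6=3125/8192<5/13$ and the decrease of $t^{-2}-1/8-t$, whereas your direct evaluation at $a_6$ works equally well; note only that $a_6^{-2}\approx 6.87$ rather than $6.89$, which leaves the margin intact.
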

\begin{proof}
The first three bounds follow from Lemma~\ref{lem:ladder} and
\eqref{eq:ladder5}--\eqref{eq:ladder7}. For $N=8$ set
$a_j=640^{-1}(5/2)^j$, $0\leq j\leq7$. Then
\[
 a_6=3125/8192<5/13<2/5,\qquad a_7=15625/16384<1<\pi/3.
\]
Equation~\eqref{eq:uniformladder} supplies the seven ladder comparisons.
Since $t^{-2}-1/8-t$ is decreasing,
\[
 f(a_6)>(13/5)^2-1/8-5/13=6+651/2600>6.
\]
The geometric sum $\sum_{j=0}^7a_j<2<2\pi$ completes the hypotheses
of Lemma~\ref{lem:ladder}. Finally Machin's identity gives
$\pi/4=4\arctan(1/5)-\arctan(1/239)<4/5$; hence $\pi<16/5$,
which yields all three displayed normalized bounds.
\end{proof}

\begin{remark}[Why different gap bounds are needed]
The different lower bounds are needed for the different initial
verification domains. The weaker common inequality $x_i>1/640$ alone
would not put every six-satellite solution in $R_i\geq1/128$, or every
seven-satellite solution in $R_i\geq1/512$. The common propagation argument must therefore supply the bound required
by each specified verification domain.
\end{remark}

\section{Rigorous finite verification on compact regions}\label{sec:certificates}
The analytic argument reduces the symmetry problem to inequalities on compact
regions. We subdivide an initial region into smaller closed regions and verify
one of the criteria below on every final subregion. The criteria either prove
that a candidate reflection matrix is nonsingular or exclude a solution of the
original force equations.

For each subregion, the stored verification data specify the criterion, the
chosen matrix or force combination, and the bounds to be checked. We call this
its \emph{verification record}. A separate verification program reconstructs
the regions and checks the inequalities. The number of subdivisions measures
the size of the computation; its validity depends on complete coverage and on
the stated mathematical implications.

\subsection{Closed verification regions and complete coverage}
For a rational box $B=\prod_i[\ell_i,u_i]$, put
\begin{equation}\label{eq:boxslice}
 P_B=B\cap\left\{\sum_iR_i=2\right\}.
\end{equation}
The set $P_B$ is the intersection of the box with the fixed-sum
hyperplane; we refer to it as a \emph{fixed-sum section}. Some initial
domains also impose ordering inequalities to select representatives under
cyclic relabeling and reversal. An estimate on all of $P_B$ remains valid
on the subset satisfying these additional inequalities. The reference point
or vertices used to prove that estimate need not satisfy the additional
inequalities themselves.

\begin{lemma}[Vertices and subdivision by closed inequalities]\label{lem:covergeometry}
Every vertex of a nonempty $P_B$ has at most one coordinate strictly
between its bounds. All vertices are therefore found by fixing all but
one coordinates at bounds and solving the sum equation, retaining feasible
points and removing duplicates. This procedure also covers lower-dimensional sections
and sections consisting of a single point. Tightening a bound by a necessary inequality preserves the
represented set. Two closed cuts $R_j\leq c$ and $R_j\geq c$ cover the
original region, including the common boundary.
\end{lemma}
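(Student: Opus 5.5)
The plan is to treat $P_B$ as a polytope in $\R^n$ cut out by $n$ box constraints $\ell_i\le R_i\le u_i$ together with the single equation $\sum_iR_i=2$. The vertex statement will follow from the rank characterization of vertices, and the statements about tightening and cuts are set-theoretic.

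For the first claim, recall that a point $p$ of a polyhedron $\{x:Ax\le b,\ Cx=d\}$ is a vertex exactly when the active constraints at $p$, together with the equality rows, have rank $n$. Here the equality contributes the single row $(1,\ldots,1)$. Each active box constraint contributes a coordinate vector $e_i$, and both bounds count as active when $\ell_i=u_i$. Let $k$ be the number of coordinates strictly between their bounds. Then the active rows span at most $\{e_i: i \text{ at a bound}\}\cup\{(1,\ldots,1)\}$, a set of rank at most $(n-k)+1$. Rank $n$ therefore forces $k\le1$.

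This gives the enumeration procedure. Fix all coordinates but one, say $R_j$, at chosen bounds, and solve $R_j=2-\sum_{i\ne j}R_i$. Keep the point if $\ell_j\le R_j\le u_j$. This produces every vertex:
\begin{itemize}
\item a vertex with $k=1$ arises by taking $j$ to be the interior coordinate;
\item a vertex with $k=0$ arises from any choice of $j$.
\end{itemize}
Conversely, each retained point lies in $P_B$. So the list consists of feasible points containing all vertices, and removing duplicates handles multiply generated points. For the reduction "positivity at vertices implies positivity on $P_B$", it is harmless if the list contains some non-vertex feasible points. The convex hull of the list equals $P_B$, because $P_B$ is a bounded polyhedron and hence the hull of its vertices. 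The degenerate cases need one remark. When some $\ell_i=u_i$, or $P_B$ is a lower-dimensional set or a single point, the rank argument is unchanged, since the active set merely grows. In particular, a single point is itself a vertex and is produced by the same recipe.

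For the last two assertions, suppose a necessary inequality holds for every solution lying in the region, and it replaces $\ell_j$ by some $\ell_j'\ge\ell_j$ (or $u_j$ similarly). Then every represented solution still lies in the tightened section, so the represented set is preserved. Finally, $[\ell_j,u_j]=[\ell_j,c]\cup[c,u_j]$ for $\ell_j\le c\le u_j$; if $c$ lies outside this interval, one piece is empty and the other is the original. Intersecting with the other constraints shows that the two closed pieces cover $P_B$, including points with $R_j=c$.

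The only step needing care is making sure the enumeration claim is read correctly. The procedure yields a superset of the vertex set, contained in $P_B$, and that is all the later vertex-based arguments require.
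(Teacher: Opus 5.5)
Your proof is correct and follows the paper's argument. The only difference is that you get the one-free-coordinate property from the active-constraint rank criterion, whereas the paper argues directly from extremality: two coordinates strictly inside their bounds could be moved in opposite directions without leaving $P_B$. These are the same fact in its two standard forms. Your caveat about non-vertex points in the list is harmless but unnecessary, since every retained point has the $n-1$ fixed-coordinate rows and the row $(1,\ldots,1)$ active, these have rank $n$, and so every retained point is itself a vertex.
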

\begin{proof}
If two coordinates of a vertex were strictly inside their bounds, small
opposite changes in them would preserve the sum and the box, contradicting
extremality. The enumeration contains every remaining possibility.
The tightening assertion is a set inclusion in both directions on the
constrained set. The assertion about two closed cuts is their union
identity. The argument allows equality in the ordering constraints and does not
require a unique representative of each configuration.
\end{proof}

In particular, valid sum tightenings include
\[
 \ell_i\leftarrow\max\{\ell_i,2-\sum_{j\ne i}u_j\},\qquad
 u_i\leftarrow\min\{u_i,2-\sum_{j\ne i}\ell_j\}.
\]
They can be applied successively using already valid bounds. Only strict
inconsistencies, such as $\ell_i>u_i$ or $\sum\ell_i>2$, prove
emptiness. Equality does not prove emptiness. For bookkeeping, the recursive subdivisions are stored as a finite tree: its
root is the initial region, its children are the regions produced by one
split, and its terminal nodes represent the final subregions. We use the tree
only to record the coverage. The verification program must check the initial
parameters, each subdivision relation, that every recorded node is reached
from the initial region, and the criterion and inequalities for every final
subregion. A missing, reused, or unreachable record cannot establish that any
part of the domain has been verified. In the tables below, X denotes a region
proved empty; P, N, TN, F, and C denote the five criteria defined next.

\subsection{Positive quadratic directions and negative-definite upper bounds}
In each case the matrix is a sum of fixed positive semidefinite rank-one
matrices with scalar coefficients:
\begin{equation}\label{eq:rankone}
 L(R)=\sum_\nu c_\nu\Phi(\pi z_\nu(R),\pi h_\nu(R))v_\nu v_\nu^T,
 \qquad c_\nu>0,
\end{equation}
with affine normalized arguments. The value $c_\nu=1/2$ is retained
for the unpaired antipodal block.

\begin{lemma}[Matrix criteria P and N]\label{lem:PN}
Suppose $L$ has a known negative subspace of codimension one.
Criterion \textup{P} uses a fixed nonzero rational direction
$w$ and strictly positive lower bounds for $w^TLw$ at every vertex of $P_B$.
These bounds prove that $L$ is nonsingular throughout $P_B$.
Criterion \textup{N} uses a matrix upper bound $L(R)\preceq U(B)\prec0$
valid on the whole fixed-sum section, and gives the same conclusion.
\end{lemma}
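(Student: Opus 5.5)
The plan for Criterion~P is to pass from the vertex values to all of $P_B$ by concavity, and then invoke Lemma~\ref{lem:inertia}. Fix the rational direction $w$. By \eqref{eq:rankone},
\[
 w^TL(R)w=\sum_\nu c_\nu(v_\nu^Tw)^2\,\Phi\bigl(\pi z_\nu(R),\pi h_\nu(R)\bigr),
\]
with $c_\nu(v_\nu^Tw)^2\geq0$. The arguments of $\Phi$ are affine in $R$, and $\Phi$ is jointly concave on its convex domain by Lemma~\ref{lem:phi}. Hence each summand is concave in $R$, and so is the sum; this is the assertion of Lemma~\ref{lem:avg} for fixed quadratic directions.

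The first step is to check that the affine image of $P_B$ lies inside the domain $|h|<\min\{z,2\pi-z\}$. The domain is convex, and in the applications the endpoints $z\pm h$ are positive proper arc sums or their complements, as stated after \eqref{eq:J}. Since every coordinate is bounded below by the positive gap bounds of Proposition~\ref{prop:collars}, this containment holds on all of $P_B$, not merely at its vertices.

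The second step uses that $P_B$ is a compact polytope. By Lemma~\ref{lem:covergeometry} its vertex set is finite and explicitly enumerable, and every $R\in P_B$ is a convex combination $R=\sum_k\lambda_kV_k$ of vertices. Concavity then gives
\[
 w^TL(R)w\;\geq\;\sum_k\lambda_k\,w^TL(V_k)w\;\geq\;\min_k\beta_k>0,
\]
where $\beta_k$ are the certified positive lower bounds at the vertices $V_k$. Thus $w^TLw>0$ throughout $P_B$. Together with the hypothesized negative subspace of codimension one, Lemma~\ref{lem:inertia} gives exactly one positive eigenvalue, $d-1$ negative ones, and nonsingularity at every point of $P_B$.

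Criterion~N is immediate. For every $R\in P_B$ and every $Z\ne0$ we have $Z^TL(R)Z\leq Z^TU(B)Z<0$, so $L(R)$ is negative definite and therefore nonsingular; this is the second clause of Lemma~\ref{lem:inertia}.

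The main obstacle is the domain issue in the first step. Concavity of $\Phi$ is only available on its open convex domain, so the convex-combination argument needs every point of the segment between vertices to stay away from collisions. I expect to rely on the paper's standing convention that the block arguments are proper arc sums with strictly positive lower bounds on $P_B$. A secondary point is that the vertex bounds are only lower bounds on the exact values. This is harmless, because concavity needs lower bounds only at the extreme points.
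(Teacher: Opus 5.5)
Your proposal is correct and follows the paper's proof. Concavity of $w^TL(R)w$ in the gaps (Lemma~\ref{lem:avg}) lets the vertex lower bounds control all of $P_B$, and Lemma~\ref{lem:inertia} then gives nonsingularity for P. For N, the comparison $Z^TL(R)Z\leq Z^TU(B)Z<0$ gives the conclusion directly.

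Your explicit check that the affine arguments of $\Phi$ stay in its domain on all of $P_B$ is a detail the paper leaves implicit in Lemma~\ref{lem:avg}. That check rests on the positive lower bounds of the box coordinates, which are inherited from the initial verification domain rather than coming directly from Proposition~\ref{prop:collars}.
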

\begin{proof}
The scalar form $w^TL(R)w$ is concave by Lemma~\ref{lem:avg}, so its
minimum on a polytope is bounded below by the least of its vertex values.
A positive bound and Lemma~\ref{lem:inertia} prove the P assertion.
The N assertion is the other part of that lemma.
\end{proof}

The two criteria require different bounds: P requires a lower bound,
whereas N requires a matrix upper bound. Negative scalar
values at the vertices of a concave function do \emph{not} prove
negativity in the interior. For N, one may first use $L(s,t)\preceq
L(s,0)$ and bound each $g$ coefficient above over its argument interval.
The maximum of $g$ occurs at the point nearest $\pi$. Summing scalar
upper bounds with the rank-one multipliers gives the required matrix
upper bound. Exact leading-principal-minor signs, or an exact congruence
factorization, then verify strict negative definiteness.

The recorded P verifications use two ways to obtain lower bounds. A concave
function lies above its chords. Thus a rational piecewise-linear lower
model $\ell(r)<g(\pi r)$ gives
\begin{equation}\label{eq:chordlower}
 \frac1{b-a}\int_a^b\ell(r)\dd r
 <\Phi\left(\frac{\pi(a+b)}2,\frac{\pi(b-a)}2\right)
 \quad(a<b),
\end{equation}
with the value $\ell(a)$ when $a=b$. The integrals of a rational
piecewise-linear model are exact rational numbers. Alternatively,
composite trapezoids in the integral \eqref{eq:phi} give lower bounds,
and composite midpoint values give upper bounds, because the integrand
is concave. For $m$ panels the trapezoid endpoint weights are $1/(2m)$,
its interior weights are $1/m$, and midpoint weights are $1/m$.
Both formulas include $h=0$.

For completeness, a rational sine lower bound is converted to a $g$
lower bound using
\begin{equation}\label{eq:G}
 g(t)=G(\sin(t/2)),\qquad
 G(s)=-\frac1{4s^3}+\frac1{8s}-1+2s^2,
 \quad 8s^4G'(s)=32s^5-s^2+6>0.
\end{equation}
The checker must establish positivity of any lower sine denominator
before division. Argument reduction, rational bounds on $\pi$, Taylor
remainders, and outward rounding are part of the verification. Outward
rounding means rounding lower interval endpoints downwards and upper
endpoints upwards, so that the resulting intervals still contain the exact
values. These steps cannot be replaced by floating-point assumptions.

\subsection{Excluding solutions using the original forces}
A force estimate excludes a solution of the full angular equations
whenever a force or a fixed linear combination of forces has a strict sign
throughout the region. This criterion does not require a nonsingular
reflection matrix.
For $i<j$ set $A_{ij}(R)=\sum_{k=i}^{j-1}R_k$. Then
\begin{align}
 F_i(R)&=\sum_{j>i}f(\pi A_{ij}(R))-
                  \sum_{j<i}f(\pi A_{ji}(R)),\label{eq:fullforce}\\
 H_w(R)&=\sum_iw_iF_i(R)
       =\sum_{i<j}(w_i-w_j)f(\pi A_{ij}(R)).\label{eq:H}
\end{align}
At a full stationary point every $H_w$ vanishes, regardless of the
process used to propose $w$. Adding a constant to all weights has no
effect. Criterion F proves that a single $F_i$ has a strict sign; criterion C
proves that a selected $H_w$ has a strict sign. Both exclude solutions
throughout the region; neither establishes existence or uniqueness of a zero.

Simple force intervals use exact arc bounds. For an arc index set $I$,
\[
 \max\left\{\sum_{i\in I}\ell_i,2-\sum_{i\notin I}u_i\right\}
 \leq\sum_{i\in I}R_i\leq
 \min\left\{\sum_{i\in I}u_i,2-\sum_{i\notin I}\ell_i\right\}.
\]
Direct interval evaluation can give wide bounds when large terms cancel.
The next estimate retains more of this cancellation by expanding the full
force combination about a feasible reference point.

\begin{lemma}[Force bounds about a point in the fixed-sum section]\label{lem:centered}
Choose $c\in P_B$ rational, and use $R_0,\ldots,R_{N-2}$ as local
coordinates, with $R_{N-1}=2-\sum_{k<N-1}R_k$. Let $[h_c]$ enclose
$H_w(c)$ and let $J_k$ enclose
\begin{equation}\label{eq:forcejac}
 \partial_{R_k}H_w(R)=
 \pi\sum_{i\leq k<j}(w_i-w_j)g(\pi A_{ij}(R)),\quad 0\leq k<N-1,
\end{equation}
throughout $P_B$. Then
\begin{equation}\label{eq:centered}
 H_w(R)\in[h_c]+\sum_{k<N-1}J_k([\ell_k,u_k]-c_k).
\end{equation}
More sharply, choose rational $q_k$ and radii $e_k\geq0$ with
$J_k\subset[q_k-e_k,q_k+e_k]$, and set $q_{N-1}=0$. With
$d_k=\max\{c_k-\ell_k,u_k-c_k\}$ and $E=\sum_{k<N-1}e_kd_k$,
\begin{equation}\label{eq:supportcentered}
 H_w(P_B)\subseteq[h_c]+\left[
 \min_{R\in P_B}q\cdot(R-c),\ 
 \max_{R\in P_B}q\cdot(R-c)\right]+[-E,E].
\end{equation}
If either enclosing interval excludes zero, criterion C applies.
\end{lemma}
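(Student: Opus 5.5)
The plan is a first-order expansion of $H_w$ about the reference point $c$ in the local coordinates $R_0,\dots,R_{N-2}$, followed by interval control of the gradient. First we record that $P_B$ is a compact convex polytope: it is a box intersected with a hyperplane. In the chosen chart the map $R\mapsto(R_0,\dots,R_{N-2})$ identifies $P_B$ with a convex set $Q$ contained in $\prod_{k<N-1}[\ell_k,u_k]$. Every arc sum $A_{ij}(R)$ with $i<j$ is a positive proper arc sum of gaps. As noted after \eqref{eq:J}, these sums and their complements stay inside $(0,2)$ on a collision-free region. Hence $f(\pi A_{ij})$ is smooth along every segment in $Q$.

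To derive \eqref{eq:forcejac}, we substitute $R_{N-1}=2-\sum_{k<N-1}R_k$ into \eqref{eq:H}. Some terms have $j\le N-1$, so that $R_{N-1}$ does not occur in $A_{ij}$. For these, $\partial_{R_k}A_{ij}=1$ exactly when $i\le k<j$.

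The wrapped sums need more care, and I expect this step to be the only delicate one. In \eqref{eq:H} the indices run over satellites $0,\dots,N-1$, so $j\le N-1$ and $A_{ij}=\sum_{k=i}^{j-1}R_k$ never actually contains $R_{N-1}$. The chain rule therefore gives \eqref{eq:forcejac} directly, using $\frac{d}{dA}f(\pi A)=\pi g(\pi A)$. This also explains why $q_{N-1}=0$ is the correct convention.

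Next we apply the mean value theorem along the segment from $c$ to $R$, which lies in the convex set $P_B$:
\[
 H_w(R)-H_w(c)=\sum_{k<N-1}\partial_{R_k}H_w(\xi)\,(R_k-c_k)
\]
for some point $\xi$ on that segment. Since $\partial_{R_k}H_w(\xi)\in J_k$ and $R_k-c_k\in[\ell_k,u_k]-c_k$, interval arithmetic gives \eqref{eq:centered}.

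For \eqref{eq:supportcentered}, write $\partial_{R_k}H_w(\xi)=q_k+\eta_k$ with $|\eta_k|\le e_k$. Then
\[
 \sum_k\partial_{R_k}H_w(\xi)(R_k-c_k)=q\cdot(R-c)+\sum_k\eta_k(R_k-c_k).
\]
The term $q\cdot(R-c)$ includes the index $N-1$ with coefficient $q_{N-1}=0$. It therefore lies between the minimum and the maximum of $q\cdot(R-c)$ over $P_B$. These extrema exist by compactness and are attained at vertices, which Lemma~\ref{lem:covergeometry} lets us enumerate. For the remaining sum, $|R_k-c_k|\le d_k$ gives absolute value at most $E$. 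Adding $[h_c]$, which encloses $H_w(c)$, yields \eqref{eq:supportcentered}.

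Finally, suppose either enclosure excludes zero. Then $H_w$ has a strict sign on $P_B$. Since every $H_w$ vanishes at a full stationary point, no solution lies in the region, which is criterion C.
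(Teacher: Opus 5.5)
Your proposal is correct and follows the paper's argument. You expand to first order along the segment from $c$ to $R$ inside the convex fixed-sum section (using the scalar mean value theorem where the paper integrates the gradient, which is equivalent here), then split each gradient component as $q_k$ plus a residual bounded by $e_kd_k$. Your observation that no $A_{ij}$ contains $R_{N-1}$, so the chain rule gives \eqref{eq:forcejac} with its factor $\pi$, matches the paper's remark that this formula follows from the original pair sum.
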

\begin{proof}
The segment from $c$ to $R$ lies in the convex fixed-sum section. Integrate the
gradient along that segment to obtain \eqref{eq:centered}. Subtract
$q$ from the gradient before integration and bound each residual by
$e_kd_k$ to obtain \eqref{eq:supportcentered}. The displayed affine
extrema are attained at slice vertices and hence can be computed exactly.
Formula~\eqref{eq:forcejac} follows from the original pair sum; the
factor $\pi$ is essential.
\end{proof}

A rational feasible center is
$c=\ell+\alpha(u-\ell)$ with
$\alpha=(2-\sum_i\ell_i)/\sum_i(u_i-\ell_i)\in[0,1]$.
If the denominator is zero, a nonempty section consists only of the point
$c=\ell$, where the function is evaluated directly. The construction
therefore requires no division by zero.

\subsection{An affine matrix upper bound: criterion TN}
For eight satellites, a variant of criterion N uses affine tangent bounds
in place of constant scalar upper bounds. Put $\mathfrak g(r)=g(\pi r)$, so
$\mathfrak g'(r)=\pi g'(\pi r)$. Concavity gives
\begin{equation}\label{eq:tangent}
 \mathfrak g(a(R))\leq
 \mathfrak g(\alpha)+\mathfrak g'(\alpha)(a(R)-\alpha),
 \qquad \alpha=a(R_*),
\end{equation}
for a fixed feasible rational reference point $R_*$. Outward interval
bounds on the value and slope give a rational affine upper bound for the
right-hand side. Since the difference of two affine expressions is bilinear
in the argument and the slope uncertainty, checking both argument
endpoints and both slope endpoints suffices. Dropping the factor $\pi$
or its sign after argument reflection would invalidate the estimate.

\begin{lemma}[Negative-definiteness criterion TN using tangent bounds]\label{lem:TN}
Suppose the scalar bounds \eqref{eq:tangent}, summed with the positive
rank-one multipliers, yield $L(R)\preceq A(R)$ for an affine symmetric
matrix $A$. If for some rational $\gamma>0$,
$A(v)+\gamma I\prec0$ at every vertex $v$ of $P_B$, then
$L(R)\prec-\gamma I$ throughout $P_B$.
\end{lemma}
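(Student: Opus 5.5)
The argument has two parts. First, an affine matrix function that is negative definite at the vertices is negative definite on the whole polytope. Second, the hypothesis $L(R)\preceq A(R)$ transfers this bound to $L$.

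For the first part, fix a unit vector $x$. By hypothesis $A(R)=A_0+\sum_k R_kA_k$ with fixed symmetric $A_k$, so the scalar function $R\mapsto x^T(A(R)+\gamma I)x$ is affine in $R$. The fixed-sum section $P_B$ is a compact convex polytope. By Lemma~\ref{lem:covergeometry} it is the convex hull of its finitely many vertices, and this includes the degenerate cases of lower-dimensional sections or a single point. Write any $R\in P_B$ as a convex combination $R=\sum_v\mu_v v$ with $\mu_v\geq0$ and $\sum_v\mu_v=1$. Affinity gives the matrix identity
\[
 A(R)+\gamma I=\sum_v\mu_v\bigl(A(v)+\gamma I\bigr).
\]
Each summand is negative definite and at least one $\mu_v$ is positive. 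Hence the convex combination is negative definite, that is, $A(R)\prec-\gamma I$ on all of $P_B$.

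For the second part, combine this with $L(R)\preceq A(R)$ to obtain $x^TL(R)x\leq x^TA(R)x<-\gamma|x|^2$ for every $x\ne0$. This is $L(R)\prec-\gamma I$. By Lemma~\ref{lem:inertia}, or directly, $L$ is then nonsingular throughout $P_B$.

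Most of the care goes into justifying the hypothesis $L(R)\preceq A(R)$ on all of $P_B$, not just at the vertices.
\begin{itemize}
\item Start from Lemma~\ref{lem:avg}: $L(s,t)\preceq L(s,0)$. The matrix $L(s,0)$ is $\sum_\nu c_\nu\,\mathfrak g(a_\nu(R))\,v_\nu v_\nu^T$, with $c_\nu>0$ and affine arguments $a_\nu$.
\item The tangent inequality \eqref{eq:tangent} holds for every $R$, by concavity of $g$ from Lemma~\ref{lem:phi}. The reference argument $\alpha=a_\nu(R_*)$ must lie in the domain where $g$ is concave, which is guaranteed by taking $R_*$ feasible.
\item Replace the exact value and slope by outward rational enclosures. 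Each scalar bound must remain an upper bound over the whole argument range. The right-hand side is bilinear in the slope and in $a_\nu(R)-\alpha$, so checking the four endpoint combinations suffices; this gives a valid affine majorant.
\item Multiply each scalar bound by the positive semidefinite rank-one matrix $c_\nu v_\nu v_\nu^T$ and sum. This preserves the Loewner order and yields the affine matrix $A(R)$.
\end{itemize}
I expect the main obstacle to be this bookkeeping, namely ensuring that the affine majorant is uniform over $P_B$ despite the interval uncertainty in $\mathfrak g'(\alpha)$. The convexity step itself is routine.
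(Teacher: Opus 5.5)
Your proof is correct and follows the paper's own argument. You write each point of $P_B$ as a convex combination of its vertices, use that $A$ is affine and that negative-definite matrices form a convex cone to obtain $A(R)\prec-\gamma I$, and then transfer this bound to $L$ through $L(R)\preceq A(R)$. Your bulleted discussion of how the affine majorant is built agrees with the paper's surrounding text, but it concerns the lemma's hypothesis, which is assumed, rather than anything the proof itself must establish.
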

\begin{proof}
Every point of the slice is a convex combination of its vertices, and
$A$ is affine. Negative definite matrices form a convex cone. The same
strict margin $\gamma$ therefore holds at every point of the slice.
\end{proof}
The vertex test applies to the affine matrix \emph{upper bound}.
Negativity at the vertices of the original concave coefficients would not
suffice. Each upper bound must be verified on the actual final box obtained
from all preceding splits and bound tightenings. A check on a different box
used when choosing the proposed bound does not suffice.

\subsection{The final covering implication}
\begin{proposition}[From a finite cover to reflection symmetry]\label{prop:coverlogic}
Let a closed initial verification domain contain a representative of every
solution not already treated analytically. Suppose a finite recursive
subdivision covers this domain using valid closed splits and bound
tightenings that preserve every admissible point. If every final subregion
is empty or satisfies one of the criteria P, N, TN, F, or C above, then every
solution represented by the initial domain has a reflection axis.
\end{proposition}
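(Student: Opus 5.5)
The plan is to take an arbitrary collision-free solution $\theta$ that is represented by the initial domain and follow it down the subdivision tree to a final subregion that contains it. Then we show that the criterion certified on that subregion forces a reflection axis. We first convert $\theta$ to normalized gaps $R$, using the representative chosen by cyclic relabeling and reversal. The gap bounds of Proposition~\ref{prop:collars} and the ordering inequalities place this $R$ in the root region, by the hypothesis on the initial domain. We then proceed by induction on depth. By Lemma~\ref{lem:covergeometry}, every closed split $R_j\le c$, $R_j\ge c$ covers its parent, including the common boundary. Every recorded tightening preserves all admissible points of the constrained set. Hence at each internal node, $R$ lies in at least one child. Since the tree is finite, this descent ends at a final subregion $P_B$ with $R\in P_B$ (together with any ordering constraints, which only shrink the set).

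Next we treat the five possible labels of that final subregion. Label X is impossible: a strict inconsistency certifies $P_B=\emptyset$, yet $R\in P_B$. For labels F and C, a single $F_i$, or some $H_w$ as in \eqref{eq:H}, has a strict sign throughout $P_B$. For C this sign may be obtained through the enclosure of Lemma~\ref{lem:centered}. But at a full stationary point every $F_i$ vanishes, and hence so does every $H_w$, which is a contradiction. So these regions contain no solution, and the implication holds vacuously. For labels P, N and TN, the record specifies a candidate reflection, and we pass to its defect coordinates $(\mathbf s,\mathbf t)$. By Lemma~\ref{lem:avg}, stationarity gives $L(\mathbf s,\mathbf t)\mathbf t=0$, with $L$ of the rank-one form \eqref{eq:rankone}. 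For P, Lemma~\ref{lem:PN} uses concavity of $w^TLw$ in the affine gap arguments together with the vertex bounds to get $w^TL(R)w>0$. Combined with the known codimension-one negative subspace and Lemma~\ref{lem:inertia}, this makes $L$ nonsingular. For N, the bound $L\preceq U\prec0$ gives the same conclusion. For TN, Lemma~\ref{lem:TN} gives $L\prec-\gamma I$. In each case $\mathbf t=0$, which means the configuration is invariant under the chosen reflection.

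The main obstacle is a domain issue. Lemma~\ref{lem:avg} requires the segment from $0$ to $\mathbf t$ to lie in a convex collision-free defect domain. It also requires every paired interval $[z-|h|,z+|h|]$ to lie in $(0,2\pi)$. I would check that, for the actual solution, all block arguments $z_\nu\pm b_\nu^T\lambda\mathbf t$ with $\lambda\in[0,1]$ are positive proper arc sums or their complements. I would do this by noting that they interpolate affinely between arc sums of the configuration and of its reflected image, both of which are collision-free. This is the point where the remark that noncollision modulo $2\pi$ is insufficient matters.

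I would also check that the matrix whose quadratic form was estimated at the vertices of $P_B$ is $L(s,t)$ evaluated at the true $(\mathbf s,\mathbf t)$, and not at some reference point. If the defect coordinates were changed by an invertible $T$, I would use congruence invariance of nonsingularity, as in the remark after Lemma~\ref{lem:avg}. Once this is settled, every represented solution lands in a region whose certificate forces $\mathbf t=0$ for some reflection, which proves the proposition.
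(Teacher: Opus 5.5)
Your proposal is correct and follows essentially the same route as the paper. You descend through the closed splits and tightenings to a final subregion containing the solution, rule out X, F and C by emptiness or the force equations, and use the invertibility supplied by P, N or TN together with $Lt=0$ to force the defect to vanish. The paper's proof also spells out one step you leave implicit: undoing the cyclic relabeling or reversal used to choose the representative preserves the existence of a reflection axis. The collision-free segment conditions you flag as the main obstacle are handled in the paper outside this proof, in Lemma~\ref{lem:avg} and the case-specific matrix propositions.
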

\begin{proof}
At each split, at least one of the two closed subregions contains the chosen
solution, and each valid bound tightening preserves it. Since there are only
finitely many subdivisions, the solution belongs to a final subregion.
That subregion cannot be empty. Criteria F and C would contradict the
original force equations. Criteria P, N, and TN give an invertible candidate
reflection matrix, so the exact equation $Lt=0$ forces the corresponding
reflection defect to vanish. Relabeling or reversing the cyclic order
preserves the existence of the resulting geometric reflection.
\end{proof}

\section{Reflection symmetry for five satellites}\label{sec:n5}
For five satellites the reflection matrix has dimension two. If two gaps
are equal, the configuration is already reflection-symmetric. If all gaps
are distinct, ordering them reduces the proof to three possible cyclic
arrangements. Only this case requires consideration of every ordering of
the gaps.

\subsection{Reflection equations and a common negative direction}
Choose a candidate fixed satellite $P_0$ and list the successive gaps as
\begin{equation}\label{n5:eq:order}
 (d,c,e,b,a)=(u+p,v+q,e,v-q,u-p),\qquad
 e=2\pi-2u-2v>0.
\end{equation}
Here $u>|p|$, $v>|q|$, and $u+v<\pi$. The five candidates are obtained
by cyclic shifts. The candidate is a geometric reflection precisely when
$p=q=0$. Set
\begin{equation}\label{n5:eq:coeff}
 A=\Phi(u,p),\quad B=\Phi(u+v,p+q),\quad
 C=\Phi(v,q),\quad D=\Phi(2u+v,q).
\end{equation}
The last coefficient also equals $\Phi(v+e,q)$.
The moving pairs have cumulative defects $t_1=p$, $t_2=p+q$.
Apart from two interactions within individual reflected pairs, their potential is
$J_u(p)+J_{u+v}(p+q)+J_v(q)+J_{2u+v}(q)$.
Lemma~\ref{lem:avg} therefore gives
\begin{equation}\label{n5:eq:matrix}
 M\binom pq=0,\qquad
 M=\begin{pmatrix}A+B&B\\B&B+C+D\end{pmatrix},
\end{equation}
at every full stationary point. In particular,
\begin{equation}\label{n5:eq:twoeq}
 Ap+B(p+q)=0,\qquad B(p+q)+(C+D)q=0.
\end{equation}

\begin{lemma}[A fixed negative direction]\label{n5:lem:negative}
Every collision-free candidate satisfies
\begin{equation}\label{n5:eq:negative}
 (1,-1)M(1,-1)^T=A+C+D<0.
\end{equation}
\end{lemma}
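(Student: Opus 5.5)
The plan is to get rid of the defects $p,q$ with the upper bound $\Phi(z,h)\leq g(z)$ of Lemma~\ref{lem:phi}, and then close with one concavity step and the scalar inequality \eqref{eq:two}. First I check the form itself: with $M$ as in \eqref{n5:eq:matrix},
\[
(1,-1)M(1,-1)^T=(A+B)-2B+(B+C+D)=A+C+D.
\]
So it suffices to prove $A+C+D<0$ for every collision-free candidate.

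Next I confirm that each coefficient is evaluated inside the domain of Lemma~\ref{lem:phi}. For $A=\Phi(u,p)$ and $C=\Phi(v,q)$ this holds because $u\pm p$ and $v\pm q$ are positive gaps, and $u,v<\pi$. For $D=\Phi(2u+v,q)$, the endpoints $2u+v\pm q$ are positive proper arc sums. Indeed $2u+v+q=d+c+b$ is a sum of three gaps, so it is less than $2\pi$. Thus the whole interval lies in $(0,2\pi)$, as required after \eqref{eq:J}. Lemma~\ref{lem:phi} then gives
\[
A\leq g(u),\qquad C\leq g(v),\qquad D\leq g(2u+v)=g(2\pi-2u-v),
\]
where the last equality is the symmetry of $g$ about $\pi$.

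The key step is to pair the second and third terms rather than the first two. The arguments $v$ and $2\pi-2u-v=v+e$ both lie in $(0,2\pi)$, and their midpoint is $\pi-u\in(0,\pi)$. Strict concavity of $g$ (Lemma~\ref{lem:phi}) therefore gives
\[
g(v)+g(2\pi-2u-v)\leq 2g(\pi-u).
\]
Hence $A+C+D\leq g(u)+2g(\pi-u)$. Since $0<u<\pi$ (because $u<u+v<\pi$), inequality \eqref{eq:two} of Lemma~\ref{lem:six} bounds this by $-\kappa<0$. This proves \eqref{n5:eq:negative}, in fact with the uniform margin $\kappa$.

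The only delicate point I expect is choosing this pairing. Pairing $g(u)$ with $g(v)$ by concavity leaves a bound of the form $2g(\tfrac{u+v}{2})+g(2u+v)$, which does not match any of the recorded scalar inequalities. Pairing $v$ with $v+e$ produces exactly the midpoint $\pi-u$ complementary to $u$, so the weighted inequality \eqref{eq:two} applies directly. The domain check for $D$ is routine but must be stated, since the bound $\Phi\leq g$ is used only on admissible intervals.
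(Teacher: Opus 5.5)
Your proof is correct and is essentially the paper's argument. Like the paper, you bound $A,C,D$ by $g(u),g(v),g(2u+v)$, reflect $g(2u+v)=g(2\pi-2u-v)$, apply midpoint concavity to $v$ and $v+e$ to get $2g(\pi-u)$, and finish with \eqref{eq:two}.

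There is one small slip in your domain check. In fact $2u+v+q=a+d+c$ and $2u+v-q=b+a+d$, whereas $d+c+b=u+2v+p$. Your conclusion that both endpoints are positive proper arc sums, and hence lie in $(0,2\pi)$, is unaffected.
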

\begin{proof}
Use $\Phi(z,h)\leq g(z)$ and reflection of $g$:
\[
 A+C+D\leq g(u)+g(v)+g(2u+v)
 =g(u)+g(v)+g(2\pi-2u-v)
 <g(u)+2g(\pi-u)<0.
\]
The strict midpoint inequality is valid because $u+v<\pi$; the last
bound is \eqref{eq:two}. Thus the same scalar estimate used for seven
satellites supplies this negative direction without a separate
one-variable proof.
\end{proof}

The large-gap region can also be closed directly in these coordinates.
If the chosen gap $e\geq\pi$, then $u+v\leq\pi/2$. Hence
$A\leq g(u)<0$, $B\leq g(u+v)<0$, and
$C+D\leq g(v)+g(2u+v)\leq2g(u+v)<0$.
The quadratic form
\[
 A X^2+B(X+Y)^2+(C+D)Y^2
\]
is negative definite, so \eqref{n5:eq:matrix} forces reflection.
Thus the remaining case has all five gaps at most $\pi$.
The same argument includes the boundary $e=\pi$.

\subsection{Repeated-gap rigidity}
Among five cyclic gap positions, two distinct positions are separated by
one or two steps around the cycle. Thus any equality between two gaps can
be placed in one of the two forms considered in the next lemma.

\begin{lemma}\label{n5:lem:repeatlocal}
At an angular stationary point, $a=d$ implies $b=c$, and $b=c$ implies
$a=d$, in the coordinates \eqref{n5:eq:order}.
\end{lemma}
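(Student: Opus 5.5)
The plan is to read both implications off the reduced system \eqref{n5:eq:twoeq}. In the coordinates \eqref{n5:eq:order} the condition $a=d$ is exactly $p=0$, and $b=c$ is exactly $q=0$. So it suffices to show that at a stationary point $p=0\Rightarrow q=0$ and $q=0\Rightarrow p=0$.

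First take $q=0$. The second equation of \eqref{n5:eq:twoeq} becomes $Bp=0$, and the first becomes $(A+B)p=0$. If $p\neq0$ this forces $A=B=0$, hence $A=0$ and $A+C+D=C+D$. From Lemma~\ref{n5:lem:negative} we get $C+D<0$. This is no contradiction yet, so I would look instead at $A=\Phi(u,p)$ and $B=\Phi(u+v,p)$, which share the same defect $h=p$. By Lemma~\ref{lem:phi}, $\Phi(\cdot,p)$ is strictly increasing for $z<\pi$ and symmetric about $\pi$. Since $u<u+v<\pi$, strict monotonicity gives $A<B$, which contradicts $A=B=0$. The one point to check is that both arguments lie in the common domain $|p|<\min\{z,2\pi-z\}$, and this holds because $|p|<u$. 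Hence $p=0$.

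Now take $p=0$. The equations become $Bq=0$ and $(B+C+D)q=0$, so $q\neq0$ forces $B=\Phi(u+v,q)=0$ and $C+D=0$. Here all three coefficients $B,C,D$ share the defect $h=q$. Put $\psi(z)=\Phi(z,q)$, which is concave and symmetric about $\pi$, and write $w=\min\{2u+v,\,2\pi-2u-v\}=\min\{2u+v,\,v+e\}$. We have $v<u+v<\pi$ and $\psi(u+v)=0$, so $C=\psi(v)<0$. If $w\le u+v$, then $D=\psi(w)\le0$, giving $C+D<0$, a contradiction. The remaining case is $w>u+v$, which means $e>u$ and $u+v<2u+v$. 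Here I would use concavity of $\psi$ on the interval $[v,w]$ through the zero at $u+v$. This bounds $D$ above by the value of the secant, roughly $D\le -C\,(w-u+v-\ldots)/u$ type estimates, which is insufficient on its own.

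The main obstacle is therefore this case. I expect to need one further stationarity equation, namely a force balance such as $F_0=0$ at the candidate fixed satellite, or the derivative of $\mathcal U$ in the average variable $u$, evaluated at $p=0$. Together with the restriction from the large-gap discussion that all gaps are at most $\pi$, this should show that $B=0$ and $C+D=0$ cannot hold simultaneously. If that fails, I would fall back on a one-parameter family argument: with $p=0$ the unknowns are $(u,v,q)$, and I would check the sign of $C+D$ on the curve $B=0$ by the chord bounds \eqref{eq:chordlower}.
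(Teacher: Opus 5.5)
Your proof of $b=c\Rightarrow a=d$ (the case $q=0$) is correct and coincides with the paper's. $A=B=0$ is impossible because $\Phi(\cdot,p)$ is strictly increasing on $(|p|,\pi)$ and $u<u+v<\pi$; the detour through Lemma~\ref{n5:lem:negative} is not needed. The converse implication, however, is left unproved. In the case $p=0$, $q\ne0$ with $w>u+v$, you declare the concavity estimate insufficient. You then fall back on unspecified extra stationarity equations or a numerical check along the curve $B=0$. As written this is a genuine gap, and the proposed remedies are unnecessary.

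The missing idea is simply not to reflect $2u+v$. For fixed $q$, Lemma~\ref{lem:phi} gives strict concavity of $\psi(z)=\Phi(z,q)$ on the whole interval $(|q|,2\pi-|q|)$, not only for $z\le\pi$. Both $v$ and $2u+v$ lie in this interval, because $v>|q|$ and $2\pi-(2u+v)=v+e>|q|$. The center of $B$ is exactly their midpoint, $u+v=\tfrac12\bigl(v+(2u+v)\bigr)$. Hence
\[
 0=B=\psi(u+v)>\tfrac12\bigl(\psi(v)+\psi(2u+v)\bigr)=\tfrac12(C+D)=0,
\]
a contradiction; this is the paper's argument.

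Your own secant idea would also have closed the case. Strict concavity through $(v,C)$ and $(u+v,0)$ gives $D=\psi(w)<-C\,(w-u-v)/u$. Since $w\le 2u+v$ and $-C>0$, this bound is at most $-C$, so $C+D<0$. Either way, no further force equation and no large-gap restriction are required.
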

\begin{proof}
If $p=0$ and $q\ne0$, \eqref{n5:eq:twoeq} gives $B=0$ and $C+D=0$.
For the common half-width $q$, the center $u+v$ is the midpoint of
$v$ and $2u+v$. Strict concavity in the center gives
$B>(C+D)/2=0$, a contradiction. Thus $q=0$.
If $q=0$ and $p\ne0$, the same equations give $B=A=0$.
But $u<u+v<\pi$ and strict increase of $\Phi(z,p)$ below $\pi$
give $A<B$, again a contradiction.
\end{proof}

\begin{proposition}[Repeated gaps force a reflection]\label{n5:prop:repeated}
If any two gaps of a five-satellite angular solution agree, the
configuration has a reflection axis.
\end{proposition}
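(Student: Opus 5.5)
The plan is to reduce any equality of two gaps to the situation of Lemma~\ref{n5:lem:repeatlocal}, and then read off a vanishing defect vector $(p,q)$ for a suitable candidate reflection.

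First, label the five cyclic gaps $x_0,\dots,x_4$ and suppose $x_i=x_j$ with $i\neq j$. Cyclically the positions $i$ and $j$ are separated by either one or two steps. By the remark preceding Lemma~\ref{n5:lem:repeatlocal}, we then choose the candidate fixed satellite $P_0$, and if necessary reverse the cyclic order, so that the two equal gaps occupy either the positions $\{a,d\}$ or the positions $\{b,c\}$ in \eqref{n5:eq:order}. Concretely, $d$ and $a$ are the two gaps adjacent to $P_0$, and $c$ and $b$ are the gaps one further step away on either side.

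\begin{itemize}
\item Two equal gaps that are adjacent in the cycle share a satellite. Taking that satellite as $P_0$ puts them in the positions $d$ and $a$.
\item Two equal gaps separated by one intermediate gap are placed symmetrically about the satellite opposite the middle gap. Taking that satellite as $P_0$ puts them in the positions $c$ and $b$, with the middle gap playing the role of $e$ or of the pair $a,d$ as appropriate.
\end{itemize}

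I will check carefully that this assignment matches \eqref{n5:eq:order}, since it is the only bookkeeping point.

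Second, apply Lemma~\ref{n5:lem:repeatlocal}. In the first case $a=d$ means $p=0$, and the lemma gives $b=c$, that is, $q=0$. In the second case $b=c$ means $q=0$, and the lemma gives $p=0$. In both cases $p=q=0$. By the statement after \eqref{n5:eq:order}, this is exactly the condition for the candidate to be a geometric reflection: the gap sequence $(d,c,e,b,a)$ reads the same backwards, so the reflection through the line joining the center to $P_0$ maps the configuration to itself.

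The main obstacle is the geometric placement in the first step: one must confirm that every separation pattern of equal gaps among five cyclic positions fits one of the two forms in the lemma's coordinates, respecting the constraint $e=2\pi-2u-2v>0$. Collision-freeness, which ensures the coordinates are well defined, holds automatically. Once the placement is confirmed, the proposition follows directly from Lemma~\ref{n5:lem:repeatlocal}, with no further analysis.
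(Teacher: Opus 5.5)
Your proposal is correct and is essentially the paper's own proof: choosing the fixed satellite $P_0$ puts two equal adjacent gaps at positions $\{d,a\}$ and two equal gaps at cyclic distance two at positions $\{c,b\}$, after which Lemma~\ref{n5:lem:repeatlocal} gives $p=q=0$. One cosmetic correction: in the distance-two case the single separating gap is always $e$, while $a,d$ form the two-gap separation on the other side; also, since both position patterns are reversal-invariant, no reversal is actually needed.
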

\begin{proof}
The cyclic distance between two distinct gap positions in a five-cycle
is one or two. A cyclic shift and, if necessary, reversal put their
equality in the form $a=d$ or $b=c$. Lemma~\ref{n5:lem:repeatlocal}
then gives both equalities, which are the reflection condition.
\end{proof}

\subsection{A sign law and pair-sum positivity}
Suppose for contradiction that a solution is not reflection-symmetric.
For every candidate, $p,q,p+q$ are nonzero. Indeed, either of the first
two vanishing would force reflection. If $p+q=0$, then
\eqref{n5:eq:twoeq} would give $A=C+D=0$, contradicting
\eqref{n5:eq:negative}. Dividing only after these checks yields
\begin{equation}\label{n5:eq:signlaw}
 A+C+D=-B\frac{(p+q)^2}{pq}<0,
 \qquad\sgn B=\sgn(pq).
\end{equation}
To use this law, we need a geometric condition guaranteeing $B>0$.

\begin{lemma}[Pair-sum positivity]\label{n5:lem:pairsum}
If $0<e<2\pi/3$ and $\min\{d+c,a+b\}>\pi/3$, then $B>0$.
\end{lemma}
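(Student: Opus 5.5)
The plan is to identify $B$ as an averaged value of $g$ over an explicit interval, and then use the joint concavity of $\Phi$ from Lemma~\ref{lem:phi} to reduce positivity to a check at four vertices.

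\textbf{Step 1 (reduce to a triangle in the $(z,h)$-plane).} Write $z=u+v$ and $h=p+q$, so that $B=\Phi(z,h)$. From \eqref{n5:eq:order} the endpoints of the interval are
\[
z+h=d+c,\qquad z-h=a+b .
\]
By evenness of $\Phi$ in $h$ we may assume $h\geq0$. The hypothesis $e<2\pi/3$ gives $z=\pi-e/2\in(2\pi/3,\pi)$. The pair-sum hypothesis gives $z-h>\pi/3$. Hence $(z,h)$ lies in the closed triangle
\[
T=\{\,2\pi/3\leq z\leq\pi,\ 0\leq h\leq z-\pi/3\,\}.
\]
This triangle is contained in the concavity domain $|h|<\min\{z,2\pi-z\}$ of Lemma~\ref{lem:phi}: indeed $h\leq z-\pi/3<z$ and $h\leq 2\pi/3\leq 2\pi-z$. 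Moreover, $(z,h)$ avoids the two vertices of $T$ lying on the edge $h=z-\pi/3$, because that edge is excluded strictly.

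\textbf{Step 2 (vertex values).} The vertices of $T$ are $(2\pi/3,0)$, $(2\pi/3,\pi/3)$ and $(\pi,2\pi/3)$; I will also note the point $(\pi,0)$ on the edge $z=\pi$. The key observation is that $f(\pi/3)=0$, since $(2\sin(\pi/6))^{-3}=1$, and that $f(\pi)=f(5\pi/3)=0$ as well, using $f(2\pi-t)=-f(t)$. From the divided-difference form in \eqref{eq:phi}:
\begin{itemize}
\item $\Phi(2\pi/3,\pi/3)=\bigl(f(\pi)-f(\pi/3)\bigr)/(2\pi/3)=0$;
\item $\Phi(\pi,2\pi/3)=\bigl(f(5\pi/3)-f(\pi/3)\bigr)/(4\pi/3)=0$;
\item $\Phi(2\pi/3,0)=g(2\pi/3)$. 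Evaluating \eqref{eq:g} with $s=\sqrt3/2$ gives $g(2\pi/3)=\tfrac12-\tfrac{2}{3\sqrt3}+\tfrac{1}{4\sqrt3}>0$;
\item $\Phi(\pi,0)=g(\pi)=7/8>0$, as recorded in \eqref{eq:g}.
\end{itemize}
So $\Phi\geq0$ at every vertex of $T$.

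\textbf{Step 3 (strict concavity closes the argument).} By joint concavity, $\Phi\geq0$ on all of $T$. For strictness, the point $(z,h)$ is not one of the two zero vertices. It is therefore a nontrivial convex combination of vertices of $T$ that places positive weight either on two distinct vertices or on the positive vertex $(2\pi/3,0)$. In the first case strict concavity gives $\Phi(z,h)$ strictly greater than the corresponding combination of vertex values, which is $\geq0$. In the second case that combination is already $>0$. Either way $B>0$.

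The only step needing care is confirming that $T$ lies inside the open domain where strict joint concavity holds. The main "obstacle" is really just noticing the exact zeros of $f$ at $\pi/3$ and $5\pi/3$, which make the two constraining vertices give the value zero exactly. This is why the hypothesis must be a strict inequality.
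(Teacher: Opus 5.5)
Your proof is correct, but it takes a different route from the paper. The paper sets $x=\min\{d+c,a+b\}$ and $y=\max\{d+c,a+b\}$ and splits into cases. If $x=y$, then $B=g(\pi-e/2)>g(2\pi/3)>0$. If $x<y$ and $x+e\le\pi$, a direct sign argument gives $f(x)<0\le -f(x+e)=f(y)$, so the divided difference is positive. If $x+e>\pi$, the one-variable monotonicity statements of Lemma~\ref{lem:phi} (increasing in $z$ below $\pi$, decreasing in $h$) give $B=\Phi(m,h)>\Phi(2\pi/3,h)>\Phi(2\pi/3,\pi/3)=0$.

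You instead apply joint strict concavity once, on the closure of the admissible $(z,|h|)$-region, using the exact vertex values $0,0,g(2\pi/3),g(\pi)$. This is the same vertex principle that underlies criterion~P. It removes the case analysis and shows exactly where the bound becomes sharp. The paper's version needs no polytope geometry, and its first subcase does not even use concavity.

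Two small corrections.

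First, your region $T=\{2\pi/3\le z\le\pi,\ 0\le h\le z-\pi/3\}$ is a trapezoid, not a triangle, and $(\pi,0)$ is a genuine fourth vertex rather than a point on an edge. It lies below the edge $h=2(z-2\pi/3)$ of the triangle spanned by your other three points. You evaluated $\Phi(\pi,0)=7/8$ anyway, so the argument survives. However, the convex hull of only the three points you listed would miss admissible points such as $(z,0)$ with $z$ near $\pi$.

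Second, your closing remark overstates the role of the strict pair-sum inequality. Since $0<e<2\pi/3$ already forces $2\pi/3<z<\pi$, the point $(z,|h|)$ is never a vertex of $T$. Your strict-concavity step therefore gives $B>0$ even when $\min\{d+c,a+b\}=\pi/3$.
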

\begin{proof}
Let $x=\min\{d+c,a+b\}$ and $y=\max\{d+c,a+b\}$. Then
$x+y=2\pi-e$. If $x=y$, the coefficient is
$g(\pi-e/2)>g(2\pi/3)=1/2-5\sqrt3/36>0$.
Assume $x<y$. If $x+e\leq\pi$, then $f(x)<0$ and
$f(y)=-f(x+e)\geq0$, proving the positive divided difference.
If $x+e>\pi$, then $\pi/3<x<y<\pi$. Put
$m=(x+y)/2=\pi-e/2>2\pi/3$ and $h=(y-x)/2<\pi/3$.
Lemma~\ref{lem:phi} gives
\[
 B=\Phi(m,h)>\Phi(2\pi/3,h)>\Phi(2\pi/3,\pi/3)
 =\frac{f(\pi)-f(\pi/3)}{2\pi/3}=0.
\]
All intermediate divided differences lie in their natural domains.
\end{proof}

Write the distinct gaps in increasing order as
$0<r_1<r_2<r_3<r_4<r_5\leq\pi$, with sum $2\pi$. Then
\begin{equation}\label{n5:eq:rankbounds}
 r_3<2\pi/3,\qquad r_1+r_4>\pi/3.
\end{equation}
For the first inequality, otherwise $r_3+r_4+r_5>2\pi$.
For the second, otherwise
$r_1+r_2+r_3+r_4<r_1+3r_4\leq\pi-2r_1<\pi$, forcing $r_5>\pi$.

There are $5!/10=12$ cyclic arrangements of the ranked gaps, up to reversal.
An arrangement is written as its sequence of ranks. The table gives one
candidate axis for each arrangement; its five entries are the ranks of
$(d,c,e,b,a)$.
\begin{center}
\begin{tabular}{ccc@{\qquad}ccc}
\toprule
Order&Candidate&$\sgn pq$&Order&Candidate&$\sgn pq$\\
\midrule
12345&51234&$-$&12354&41235&$+$\\
12435&24351&$-$&12453&53124&$+$\\
12534&34125&$-$&12543&43125&$-$\\
13245&51324&$-$&13254&41325&$+$\\
13425&34251&$-$&13524&24135&$-$\\
14235&14235&$-$&14325&14325&$-$\\
\bottomrule
\end{tabular}
\end{center}
In every row $e$ has rank at most three, and each of $d+c,a+b$ contains
a gap of rank at least four. Equation~\eqref{n5:eq:rankbounds} and
Lemma~\ref{n5:lem:pairsum} therefore imply $B>0$. The nine negative
rows contradict \eqref{n5:eq:signlaw}. Only the arrangements
$12354$, $12453$, and $13254$ remain.

\subsection{The three remaining gap arrangements}
For a fixed candidate and a real number $\tau$, put
\begin{equation}\label{n5:eq:Qtau}
 Q_\tau=(1,\tau)M(1,\tau)^T
       =A+(1+\tau)^2B+\tau^2(C+D).
\end{equation}
The form $Q_{-1}<0$ is already known. It is enough to find another
fixed direction with positive form. Each $Q_\tau$ is concave in the
gaps by Lemma~\ref{lem:avg}. The definition of $\Phi$ retains its sign and allows either sign of
$p+q$.

Normalize the ranked gaps by $R_i=r_i/\pi$ and use the closed polytope
\begin{equation}\label{n5:eq:rankpoly}
 0\leq R_1\leq R_2\leq R_3\leq R_4\leq R_5\leq1,
 \qquad\sum_{i=1}^5R_i=2.
\end{equation}
For $12354$, choose
$L_1=(r_5,r_4,r_1,r_2,r_3)$ and
$L_2=(r_4,r_1,r_2,r_3,r_5)$.
For $13254$, choose
$L_1=(r_5,r_4,r_1,r_3,r_2)$ and
$L_2=(r_4,r_1,r_3,r_2,r_5)$.
Again each displayed tuple is $(d,c,e,b,a)$.
For each arrangement, the following closed subregions cover the ranked-gap
polytope. They are defined by two successive splits, with equality included
on both sides of each split.

\begin{table}[htbp]
\centering\small
\begin{tabular}{clcccr}
\toprule
Region&Additional inequalities&Axis&$\tau$&Vertices&Lower margin\\
\midrule
$P_{11}$&$R_2\leq1/3,\ R_1\leq1/5$&$L_2$&$0$&14&$28377/364000$\\
$P_{12}$&$R_2\leq1/3,\ R_1\geq1/5$&$L_2$&$1/5$&12&$213593/3500000$\\
$P_{13}$&$R_2\geq1/3,\ R_5\leq2/3$&$L_1$&$4/5$&9&$17989/84000$\\
$P_{14}$&$R_2\geq1/3,\ R_5\geq2/3$&$L_1$&$1/2$&5&$22259/67200$\\
\midrule
$P_{21}$&$R_2\leq2/7,\ R_1\leq1/5$&$L_2$&$0$&16&$17/1350$\\
$P_{22}$&$R_2\leq2/7,\ R_1\geq1/5$&$L_2$&$1/5$&12&$31/12500$\\
$P_{23}$&$R_2\geq2/7,\ R_5\leq2/3$&$L_1$&$4/5$&13&$597871/6000000$\\
$P_{24}$&$R_2\geq2/7,\ R_5\geq2/3$&$L_1$&$1/2$&14&$4217/480000$\\
\bottomrule
\end{tabular}
\caption{Verified subregions for $12354$ (first four rows) and $13254$
(last four rows). Each margin is a lower bound for the indicated fixed
quadratic form at every vertex; it is not a lower bound for all eigenvalues.}\label{n5:tab:cells}
\end{table}

For $12453$, choose $(d,c,e,b,a)=(r_5,r_3,r_1,r_2,r_4)$ and write
\[
 U=\frac{a+d}{2\pi},\quad V=\frac{b+c}{2\pi},\quad
 P=\frac{d-a}{2\pi},\quad Q=\frac{c-b}{2\pi}.
\]
The weak ordering $0\leq e\leq b\leq c\leq a\leq d\leq\pi$ is
exactly
\begin{equation}\label{n5:eq:12453}
 \begin{gathered}
 U,V,P,Q\geq0,\quad U+V\leq1,\quad U+P\leq1,\\
 2U+3V-Q\geq2,\qquad V+P+Q\leq U.
 \end{gathered}
\end{equation}
Indeed the normalized gaps are
$e/\pi=2-2U-2V$, $b/\pi=V-Q$, $c/\pi=V+Q$,
$a/\pi=U-P$, and $d/\pi=U+P$.
Split at $U=1/2$. The upper half has nine vertices. For $Q_0=A+B$,
eight have lower bound $72097/472500>0$; the last is
\[
 (U,V,P,Q)=(1/2,1/3,1/6,0),\qquad Q_0=0.
\]
At that point the two divided differences telescope to
$f(\pi)-f(\pi/3)=0$. It lies on the boundary where two gaps agree, namely $b=c$.
The lower half has five vertices and uses $Q_1$, whose lower bound is
$793/210000>0$ at every vertex.

\begin{lemma}[Extending the required quadratic form to collision boundaries]\label{n5:lem:activeextension}
For each closed subregion above, substitute its fixed value of $\tau$
in \eqref{n5:eq:Qtau} and delete every term with zero multiplier.
The sum of the remaining terms extends continuously and concavely to
the whole subregion. Each of its divided-difference endpoints belongs to
$[\pi/5,12\pi/7]$.
\end{lemma}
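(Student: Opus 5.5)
Our plan is to treat each of the eleven subregions ($P_{11},\dots,P_{24}$ and the two halves of the $12453$ polytope) separately, but with the same three steps. First, for the given candidate axis and fixed $\tau$, we write the four coefficients $A,B,C,D$ of \eqref{n5:eq:coeff} as divided differences $\Phi(z,h)=\bigl(f(z+h)-f(z-h)\bigr)/(2h)$. Their endpoints are explicit sums of gaps:
\begin{align*}
A&:\ a,\ d, & B&:\ a+b,\ c+d,\\
C&:\ b,\ c, & D&:\ c+d+e,\ a+b+e.
\end{align*}
Each endpoint is affine in the ranked gaps $R_i$ (or in $U,V,P,Q$). Terms with zero multiplier are deleted; for $\tau=0$ this removes $C+D$, leaving $A+B$. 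In every region where $\tau\neq0,-1$, all four coefficients survive.

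Second, we bound the surviving endpoints over each closed subregion using the defining inequalities of the region, namely the rank ordering, the sum constraint $\sum R_i=2$, the relation $R_5\le1$ from \eqref{n5:eq:rankpoly}, and the splits at $R_1=1/5$, $R_2=1/3$ (or $2/7$), $R_5=2/3$ and $U=1/2$. Since the endpoints are affine, their extrema over each polytope occur at vertices. The vertex lists are already enumerated in Table~\ref{n5:tab:cells} and in the $12453$ discussion, so we only need to check that each endpoint lies in $[1/5,12/7]$ after normalization. Here the key inputs are the following. Small gaps that survive only appear in regions with $R_1\ge1/5$ or inside pair sums; for instance, $e=r_1$ enters only through $c+d+e$ and $a+b+e$. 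Large sums are controlled by $2-R_1-R_2$ and similar complements bounded away from $2$. In particular, we expect the $\tau=0$ regions $P_{11},P_{21}$ to have been chosen precisely so that the potentially tiny gap $r_1$ never occurs alone as an endpoint.

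Third, once every endpoint pair $\{z-h,z+h\}$ lies in the compact interval $[\pi/5,12\pi/7]\subset(0,2\pi)$, the whole segment between them does too. The integral representation \eqref{eq:phi} then defines $\Phi$ continuously, including at $h=0$ and on the collision boundary faces $R_i=R_{i+1}$ or $R_1=0$ where original variables degenerate. Joint concavity of $\Phi$ (Lemma~\ref{lem:phi}) composed with affine arguments, multiplied by the nonnegative constants $1$, $(1+\tau)^2$, $\tau^2$, gives concavity of the sum on the whole closed subregion. This is the same mechanism as in Lemma~\ref{lem:avg}, now applied with domain the full closed polytope rather than an open collision-free set.

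The main obstacle is the second step: checking the endpoint bounds honestly at boundary vertices such as $R_1=0$ or $R_5=1$. There, for example, $c+d+e$ could approach $2\pi$ or $a$ could approach $0$, and we must confirm that such an endpoint always carries a zero multiplier in that region. We would first try to derive the bounds by hand from the rank inequalities; where that is awkward, we would fall back on an exact rational evaluation at the listed vertices, which suffices by affinity.
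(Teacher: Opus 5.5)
Your strategy is the paper's: the endpoints are affine in the gap variables, so their extremes over each polytope occur at vertices, and once every endpoint with nonzero multiplier lies in $[\pi/5,12\pi/7]$, joint concavity of $\Phi$ (Lemma~\ref{lem:phi}) under affine substitution gives the continuous concave extension. The execution, however, has a concrete error in your endpoint table for $D$. Since $D=\Phi(2u+v,q)$ with $u=(a+d)/2$, $v=(b+c)/2$, $q=(c-b)/2$, its endpoints are $2u+v\pm q$, namely $a+c+d$ and $a+b+d$. Equivalently, via $\Phi(2\pi-z,h)=\Phi(z,h)$, they are the endpoints $c+e$ and $b+e$ of $\Phi(v+e,q)$. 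Your pair $c+d+e=2\pi-(a+b)$, $a+b+e=2\pi-(c+d)$ has center $\pi+e/2$ and half-width $p+q$, so it describes $B$ a second time, not $D$. Your second step would therefore bound the wrong quantity, and it misses exactly where the stated bound is sharp.

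Concretely, take the vertex $(R_1,\dots,R_5)=(0,2/7,2/7,3/7,1)$ of $P_{24}$, where $\tau=1/2$ and $(d,c,e,b,a)=(r_5,r_4,r_1,r_3,r_2)$. There the true $D$ endpoint is $a+c+d=R_2+R_4+R_5=12/7$, while every entry of your list is at most $10/7$. The $D$ endpoints stay at most $12/7$ because $b+e$ and $c+e$ are at least $2/7$ in every region with $\tau\neq0$:
\begin{itemize}
\item in $P_{23},P_{24}$, $b=R_3$ and $c=R_4$ are at least $R_2\geq2/7$;
\item in $P_{13},P_{14}$, $b=R_2$ and $c=R_4$ are at least $1/3$;
\item in $P_{12},P_{22}$, every gap is at least $R_1\geq1/5$, so $b+e,c+e\geq2/5$;
\item in the lower half of $12453$, $b+e,c+e\geq2-(2U+V+Q)\geq1/2$.
\end{itemize}
The lower end $1/5$ comes from the $C$ endpoint $c=R_1$ in $P_{12},P_{22}$.

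With $D$ corrected, your argument becomes the paper's, provided you actually enumerate the vertices: Table~\ref{n5:tab:cells} records only their number, and there are ten polytopes, not eleven. One further minor point: the faces $R_i=R_{i+1}$ are not collision faces. There a half-width vanishes, and the continuous value $\Phi(z,0)=g(z)$ is used.
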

\begin{proof}
Each remaining endpoint is affine in the gap variables. Exact vertex
enumeration of the ten stated polytopes bounds every such normalized
endpoint between $1/5$ and $12/7$. The same bounds hold on their convex
hulls. Thus each term with nonzero multiplier stays in the domain of $\Phi$, and
Lemma~\ref{lem:phi} proves continuity and concavity of their nonnegative
linear combination. In the collision-free part this expression equals
the original quadratic form. We use this continuous extension for all
boundary vertex values below; the full reflection matrix itself need
not extend to a boundary where a collision occurs.
\end{proof}

The next proposition states the finite arithmetic checks used in the
argument for these three arrangements. They apply the same positivity
principle as criterion P, with one explicitly identified zero value on
the boundary.

\begin{proposition}[Exact inequalities for the three remaining arrangements]\label{n5:prop:vertices}
The eight subregions in Table~\ref{n5:tab:cells} have the listed vertices and
positive rational lower margins. The two half-polytopes in
\eqref{n5:eq:12453} have the margins and single zero identity just
specified. Only nonzero-multiplier divided differences are evaluated at boundary
vertices, using Lemma~\ref{n5:lem:activeextension}. In total there are 109 vertex
checks: 108 strict positive comparisons and one exact identity.
\end{proposition}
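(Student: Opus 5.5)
The proof is a finite exact computation, organized so that each of the 109 checks is a strict rational comparison or an exact algebraic identity. First I would fix the polytopes. For each of the eight subregions in Table~\ref{n5:tab:cells}, intersect the ranked-gap polytope \eqref{n5:eq:rankpoly} with the two listed closed cuts. For $12453$, use the two halves of \eqref{n5:eq:12453} obtained by splitting at $U=1/2$. The vertices are then enumerated exactly, in the spirit of Lemma~\ref{lem:covergeometry}: choose four of the defining inequalities to be active (the fixed-sum equation being always active), solve the resulting linear system in rational arithmetic, keep the feasible solutions, and remove duplicates. This gives the stated counts $14,12,9,5,16,12,13,14$, together with $9$ and $5$ for the two halves, for a total of $109$. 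Completeness of the two-split covers is the closed-cut union identity, so no coverage argument beyond Lemma~\ref{lem:covergeometry} is needed.

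Next, at each vertex I would compute the gaps in the chosen axis labelling $(d,c,e,b,a)$. From these come the centers and half-widths of $A,B,C,D$ in \eqref{n5:eq:coeff}, all of which are rational multiples of $\pi$. By Lemma~\ref{n5:lem:activeextension}, every divided-difference endpoint lies in $[\pi/5,12\pi/7]$, so $Q_\tau$ with the fixed $\tau$ of the row is a nonnegative combination of well-defined values of $\Phi$. To obtain a lower bound for each term, I would use the chord bound \eqref{eq:chordlower} with a rational piecewise-linear minorant $\ell(r)<g(\pi r)$ on $[1/5,12/7]$. I would build $\ell$ from rational sine bounds passed through the increasing function $G$ of \eqref{eq:G}, with rigorous rational enclosures of $\pi$ and outward rounding. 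Alternatively, one can use composite trapezoids in \eqref{eq:phi}. For degenerate half-width $h=0$, the value $g(z)$ is bounded directly. Summing these lower bounds with the weights $1$, $(1+\tau)^2$ and $\tau^2$, and comparing with the tabulated margin by rational cross-multiplication, completes each strict check.

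The one exceptional vertex is $(1/2,1/3,1/6,0)$ in the upper half for $12453$, and it is handled separately. There $Q_0=A+B$. Writing both terms as divided differences of $f$, the shared middle endpoint cancels, which leaves $\bigl(f(\pi)-f(\pi/3)\bigr)$ times a positive factor. I would verify the telescoping symbolically. Then $f(\pi)=0$ because $\sin\pi=0$, and $f(\pi/3)=0$ because $2\sin(\pi/6)=1$, so the value is exactly zero. This vertex lies on the face $b=c$, and the statement records it as the single identity rather than as a strict inequality.

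I expect the main obstacle to be the tight margins, in particular $31/12500$ for $P_{22}$ and $793/210000$ for the lower half of $12453$. These force the minorant $\ell$, or the trapezoid panel count, to be fine enough near arguments where $g$ curves strongly, namely the small end $\pi/5$, where $g''$ is large. My first attempt would be a uniform mesh of width about $1/200$ in $r$ on $[1/5,12/7]$, refined adaptively near $1/5$ until every vertex sum clears its margin. A second difficulty is bookkeeping. Each vertex must be evaluated with the axis assigned to its own region, since $L_1$ and $L_2$ permute the ranks differently, and only terms with nonzero multiplier may be evaluated, because $\tau=0$ deletes $C+D$ as Lemma~\ref{n5:lem:activeextension} requires.
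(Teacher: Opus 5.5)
Your proposal is correct and matches the paper's proof in structure: exact rational vertex enumeration, evaluation of only the nonzero-multiplier terms (with endpoints in $[\pi/5,12\pi/7]$), strict rational comparison against the printed margins, and the exact telescoping identity $3\{f(\pi)-f(\pi/3)\}/\pi=0$ at the single vertex $(1/2,1/3,1/6,0)$. The only difference is how each $\Phi$ is bounded at a vertex: the paper encloses $(f(z+h)-f(z-h))/(2h)$ directly from interval values of $f$ at the two endpoints (and evaluates $g$ itself when $h=0$), which is sharp and needs no mesh, whereas your integrated chord minorant of $g$ is also a valid one-sided bound but must be refined near $r=1/5$ to clear the tighter margins.
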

\begin{proof}[Exact verification of the finite inequalities]
An independent verification program reconstructs every vertex by exact linear
algebra from the stated half-spaces and the sum constraint. It deletes
zero-multiplier terms before evaluation and first checks the endpoint bounds for all terms with nonzero multiplier in Lemma~\ref{n5:lem:activeextension}.
It encloses $\pi$ by Machin's identity and evaluates sine and cosine by
rational Horner polynomials with explicit Taylor remainders, using
integer interval endpoints at a fixed scale, with outward rounding. For nonzero half-width it
combines the endpoint-force coefficients before division by $\pi$;
for zero half-width it evaluates $g$ directly. All denominators are
certified to avoid zero. The 108 strict comparisons exceed the printed
rational margins. The remaining value is the exact identity
$3\{f(\pi)-f(\pi/3)\}/\pi=0$.
The complete input geometry, reconstructed vertices, and directed
bounds are specified by \texttt{n5\_chamber\_exact\_referee.py}.
The finite verification consists of 108 strict positive comparisons and
one exact identity, checked with integer interval arithmetic at 160 and
224 bits. Both precisions give the stated inequalities.
\end{proof}

Concavity now gives $Q_\tau>0$ throughout the first eight subregions and
throughout the lower half of $12453$. In the upper half, a zero lower
bound can occur only at a convex combination of zero vertices. There is
only one such vertex, and its gaps are not all distinct. Thus $Q_0>0$
at every point of this region whose gaps are strictly ordered. Together with
$Q_{-1}<0$, this makes the candidate matrix invertible and contradicts
a nonzero reflection defect.

\begin{proposition}[Five-satellite symmetry]\label{n5:prop:symmetry}
Every collision-free five-satellite angular solution has a reflection axis.
\end{proposition}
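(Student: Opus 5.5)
The proposition is an assembly of the results of this section, so the plan is a case split on the gap pattern. Take a collision-free five-satellite solution and suppose, for contradiction, that it has no reflection axis.

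\emph{Repeated gaps.} If two of the five cyclic gaps coincide, Proposition~\ref{n5:prop:repeated} already gives a reflection axis. So the five gaps may be assumed pairwise distinct, and we rank them as $r_1<\cdots<r_5$.

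\emph{Large gaps.} If some gap is at least $\pi$, choose the candidate with that gap as $e$. The negative-definiteness argument following Lemma~\ref{n5:lem:negative} applies directly: there $A,B<0$ and $C+D<0$. Hence \eqref{n5:eq:matrix} forces $p=q=0$, a contradiction. Thus $r_5\leq\pi$, and the rank bounds \eqref{n5:eq:rankbounds} hold.

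\emph{Sign law.} With no reflection, every candidate has $p,q,p+q\neq0$, so the sign law \eqref{n5:eq:signlaw} holds for all five candidates. Up to cyclic relabeling and reversal there are twelve cyclic arrangements of the ranks. For each, the table supplies a candidate with $e$ of rank at most three, and with each of $d+c$ and $a+b$ containing a gap of rank at least four. By \eqref{n5:eq:rankbounds} and Lemma~\ref{n5:lem:pairsum}, $B>0$ for that candidate. In the nine rows where the candidate has $\sgn pq<0$, this contradicts $\sgn B=\sgn(pq)$.

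\emph{The three remaining arrangements.} This leaves $12354$, $12453$ and $13254$, where the sign law gives no contradiction. For each, the closed subregions listed in Table~\ref{n5:tab:cells}, together with the split of \eqref{n5:eq:12453} at $U=1/2$, cover the ranked-gap polytope; the solution's normalized gaps lie in it. On each subregion, the fixed form $Q_\tau$ is concave by Lemma~\ref{lem:avg}. Lemma~\ref{n5:lem:activeextension} extends it continuously and concavely to the boundary vertices. Proposition~\ref{n5:prop:vertices} gives positive values at all vertices except the single boundary vertex $(1/2,1/3,1/6,0)$ of the upper half of $12453$, where $Q_0=0$. Concavity then gives $Q_\tau>0$ in the interior, and on the faces not containing that vertex. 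The value zero could occur only at convex combinations of zero vertices, which is just that point, and it has $b=c$; this is excluded because the gaps are strictly ordered. So at the solution the chosen candidate matrix has $Q_\tau>0$ and $Q_{-1}<0$ (Lemma~\ref{n5:lem:negative}). By Lemma~\ref{lem:inertia} it is invertible, so $M(p,q)^T=0$ forces $p=q=0$, a contradiction.

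The main obstacle I expect is the bookkeeping rather than new analysis. First, the three remaining cells must genuinely cover every ordering of $r_1,\dots,r_5$ within each arrangement, including closed boundaries. Second, the representative chosen under the dihedral action must correspond to the candidate axes used. Third, the degenerate zero vertex must be handled correctly: the strictness argument has to rely on distinctness of the gaps rather than on the vertex check itself.
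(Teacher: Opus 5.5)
Your proposal is correct and follows the paper's proof essentially step for step. It uses the large-gap negative-definiteness argument, repeated-gap rigidity via Proposition~\ref{n5:prop:repeated}, the sign law \eqref{n5:eq:signlaw} to eliminate nine of the twelve arrangements, and the concavity and vertex checks of Proposition~\ref{n5:prop:vertices}, with the single boundary zero excluded by strict gap ordering. Treating repeated gaps before large gaps is harmless, since Lemma~\ref{n5:lem:repeatlocal} uses only $u+v<\pi$ and not the bound $r_5\le\pi$.
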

\begin{proof}
A gap at least $\pi$ is covered by the direct negative-matrix argument
above. Otherwise all gaps are at most $\pi$. A repeated gap forces
reflection by Proposition~\ref{n5:prop:repeated}. If all gaps are
distinct, the sign law excludes nine cyclic arrangements, and the exact
inequality checks exclude the remaining three. The nonsymmetric
case is therefore impossible.
\end{proof}

\section{Six satellites: two reflection types}\label{sec:n6}
A reflection of six distinct circle points fixes either no satellite or
two antipodal satellites. In the first case its intersections with the
circle bisect opposite gaps. In the second, the fixed satellites are
opposite in cyclic order. There are three pairings of each type.
The constructions below use differences of individual paired gaps, as in
the recorded six-satellite verification. The stored integer directions
refer to these coordinates and must be evaluated in the same coordinates.
\subsection{Candidate axes through two opposite gaps}

Choose two opposite gaps and write the cyclic sequence as
\begin{equation}\label{n6:eq:edge-order}
 (d_1,d_2,e,a_2,a_1,h).
\end{equation}
Introduce averages and reflection defects
\begin{equation}\label{n6:eq:edge-variables}
 u=\frac{d_1+a_1}{2},\qquad p=\frac{d_1-a_1}{2},
 \qquad
 v=\frac{d_2+a_2}{2},\qquad q=\frac{d_2-a_2}{2}.
\end{equation}
Thus
\begin{equation}\label{n6:eq:edge-param}
 (d_1,d_2,e,a_2,a_1,h)
 =(u+p,v+q,e,v-q,u-p,h)
\end{equation}
and
\begin{equation}\label{n6:eq:edge-sum}
 2u+2v+e+h=2\pi.
\end{equation}
This pairing, called the \emph{edge--edge type}, gives an axis through
the midpoints of the two selected gaps exactly
when
\begin{equation}\label{n6:eq:edge-zero}
 p=q=0.
\end{equation}
Indeed, in that case the gaps read $(u,v,e,v,u,h)$, and the midpoints of $e$
and $h$ are antipodal.

Define
\begin{align}
 A&=\Phi(u,p),&D&=\Phi(u+h,p),\nonumber\\
 B&=\Phi(u+v,p+q),&C&=\Phi(u+v+e,p+q),\nonumber\\
 E&=\Phi(v,q),&F&=\Phi(v+e,q).\label{n6:eq:edge-coefficients}
\end{align}

\begin{proposition}[Edge reflection system]\label{n6:prop:edge-system}
Every central configuration satisfies
\begin{equation}\label{n6:eq:edge-system-vector}
 M_E\binom pq=0,
\end{equation}
where
\begin{equation}\label{n6:eq:edge-matrix}
 M_E=
 \begin{pmatrix}
 A+D+B+C&B+C\\
 B+C&B+C+E+F
 \end{pmatrix}.
\end{equation}
Equivalently,
\begin{align}
 p(A+D)+(p+q)(B+C)&=0,\label{n6:eq:edge-eq1}\\
 (p+q)(B+C)+q(E+F)&=0.\label{n6:eq:edge-eq2}
\end{align}
The associated quadratic form is
\begin{equation}\label{n6:eq:edge-Q}
 Q_E(X,Y)=(A+D)X^2+(B+C)(X+Y)^2+(E+F)Y^2.
\end{equation}
\end{proposition}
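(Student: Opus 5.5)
The plan is to apply Lemma~\ref{lem:avg} with average variables $s=(u,v,e,h)$ and defects $t=(p,q)$, after writing the restricted potential as a sum of paired blocks. I label the satellites in cyclic order as $P_0,\dots,P_5$, so that the gaps $(d_1,d_2,e,a_2,a_1,h)$ run from $P_0$ to $P_1$, $P_1$ to $P_2$, and so on, with $h$ running from $P_5$ back to $P_0$. The candidate reflection swaps $P_0\leftrightarrow P_5$, $P_1\leftrightarrow P_4$ and $P_2\leftrightarrow P_3$.

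Holding $u,v,e$ fixed, with $h$ determined by \eqref{n6:eq:edge-sum}, I will enumerate all fifteen pairwise separations as positive proper arc sums and group them by the reflection.
\begin{itemize}
\item The within-pair interactions $P_0P_5$, $P_1P_4$ and $P_2P_3$ have separations $h$, $2v+e$ and $e$. These are independent of $p$ and $q$.
\item The pair $P_0P_1$ has separation $d_1=u+p$, and its mirror $P_4P_5$ has $a_1=u-p$. Together they form $J_u(p)$, which produces $A$.
\item The pair $P_0P_4$ has separation $2\pi-(a_1+h)$ and its mirror $P_1P_5$ has $2\pi-(d_1+h)$. Since $\widehat W$ is even and $2\pi$-periodic, this block is $J_{u+h}(p)$, which produces $D$.
\item The pair $P_0P_2$ has separation $d_1+d_2$ and $P_3P_5$ has $a_1+a_2$, giving $J_{u+v}(p+q)$ and hence $B$.
\item The pair $P_0P_3$ has separation $u+v+e+(p+q)$ and $P_2P_5$ has $u+v+e-(p+q)$, giving $J_{u+v+e}(p+q)$ and hence $C$.
\item The pair $P_1P_2$ has separation $v+q$ and $P_3P_4$ has $v-q$, giving $J_v(q)$ and hence $E$.
\item The pair $P_1P_3$ has separation $v+q+e$ and $P_2P_4$ has $v-q+e$, giving $J_{v+e}(q)$ and hence $F$.
\end{itemize}
This accounts for $3+2\cdot6=15$ interactions. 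There is no unpaired antipodal term here.

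Next I will check the domain hypothesis. Each block's interval $[z-|h|,z+|h|]$ has as endpoints a positive proper arc sum and, where needed, its complement. Hence each interval lies in $(0,2\pi)$, and along the segment $\lambda t$, $\lambda\in[0,1]$, every arc stays positive. The domain of defects with fixed averages is convex: it is cut out by the conditions $|p|<u$ and $|q|<v$, which are linear. So Lemma~\ref{lem:avg} applies with defect vectors
\[
b=(1,0)^T \text{ for } A,D,\qquad b=(1,1)^T \text{ for } B,C,\qquad b=(0,1)^T \text{ for } E,F.
\]
Summing the contributions $\Phi\, bb^T$ gives exactly $M_E$ in \eqref{n6:eq:edge-matrix}. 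Stationarity of the full angular potential implies $\nabla_t\mathcal U_s=0$, because $p$ and $q$ move the satellites at fixed $u,v,e$. Therefore $M_E(p,q)^T=0$, and expanding the two rows gives \eqref{n6:eq:edge-eq1} and \eqref{n6:eq:edge-eq2}. The quadratic form \eqref{n6:eq:edge-Q} is then $(X,Y)M_E(X,Y)^T$, obtained by regrouping the rank-one terms.

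The main obstacle is the bookkeeping for $D$ and $C$. These pairs wrap around through $h$ or $e$, so their central separation appears only after reflecting via $t\mapsto 2\pi-t$. I must confirm the following identities.
\begin{itemize}
\item The center of the $D$ block is $2\pi-u-h$. Reflection of $\Phi$ about $\pi$, from \eqref{eq:phi-properties}, identifies $\Phi(2\pi-u-h,p)$ with $\Phi(u+h,p)$.
\item The sign of the defect in each block is immaterial, because $\Phi$ is even in $h$.
\end{itemize}
I also need to check that moving $p$ and $q$ with $u,v,e$ fixed is a legitimate variation. I will do this by placing the rotational origin at the midpoint of $h$, as the earlier remark about fixing rotation permits.
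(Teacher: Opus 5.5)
Your proposal is correct and follows the paper's proof. Both arguments place the origin at the midpoint of $h$ and write the restricted potential as $J_u(p)+J_{u+h}(p)+J_{u+v}(p+q)+J_{u+v+e}(p+q)+J_v(q)+J_{v+e}(q)$ plus a constant, using $J_{2\pi-z}=J_z$ for the wrap-around blocks, and then differentiate and divide by two. The only difference is bookkeeping: the paper groups the twelve cross interactions pair by pair via \eqref{eq:Kblock}, whereas you enumerate all fifteen separations as arc sums and read off $M_E$ from Lemma~\ref{lem:avg}.
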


\begin{proof}
Put the midpoint of the gap $h$ at angle zero.  The six angles can then be
represented modulo $2\pi$ as the three paired sets
\begin{equation}\label{n6:eq:edge-angle-pairs}
 \left\{\pm\frac h2\right\},\qquad
 \left\{p\pm\left(\frac h2+u\right)\right\},\qquad
 \left\{p+q\pm\left(\frac h2+u+v\right)\right\}.
\end{equation}
The within-pair interactions do not depend on $p,q$.  Applying
\eqref{eq:Kblock} to the three cross-pair blocks and using
$J_{2\pi-z}=J_z$ gives
\begin{equation}\label{n6:eq:edge-potential-restricted}
 \begin{split}
 \cU_E(p,q)={}&J_u(p)+J_{u+h}(p)
 +J_{u+v}(p+q)+J_{u+v+e}(p+q)\\
 &{}+J_v(q)+J_{v+e}(q)+\text{constant}.
 \end{split}
\end{equation}
For example, the cross interaction between the first and second pairs in
\eqref{n6:eq:edge-angle-pairs} has centers separated by $p$ and
half-separations $h/2$ and $h/2+u$; its derivative is
\[
 2p\bigl(\Phi(u,p)+\Phi(u+h,p)\bigr)=2p(A+D).
\]
Differentiating \eqref{n6:eq:edge-potential-restricted}, applying
\eqref{eq:J}, and dividing the two critical-point equations by two
yields \eqref{n6:eq:edge-eq1}--\eqref{n6:eq:edge-eq2}.  Expanding
\eqref{n6:eq:edge-Q} gives \eqref{n6:eq:edge-matrix}.
\end{proof}

\subsection{Candidate axes through two opposite satellites}

Choose a pair of cyclically opposite satellites, labelled $P_0,P_3$, and write the cyclic gaps
as
\begin{equation}\label{n6:eq:vertex-order}
 (d_1,d_2,d_3,a_3,a_2,a_1).
\end{equation}
Set
\begin{equation}\label{n6:eq:vertex-variables}
 u_i=\frac{d_i+a_i}{2},\qquad
 p_i=\frac{d_i-a_i}{2},
 \qquad i=1,2,3.
\end{equation}
Then
\begin{equation}\label{n6:eq:vertex-sum}
 u_1+u_2+u_3=\pi.
\end{equation}
This pairing, called the \emph{vertex--vertex type}, gives an axis through
the two selected satellites exactly
when
\begin{equation}\label{n6:eq:vertex-zero}
 p_1=p_2=p_3=0.
\end{equation}
In that case the gaps are $(u_1,u_2,u_3,u_3,u_2,u_1)$ and $P_0,P_3$ are
antipodal.

Put
\begin{align}
 A&=\Phi(u_1,p_1),
 &B&=\Phi(u_1+u_2,p_1+p_2),
 &H&=\Phi(\pi,p_1+p_2+p_3),\nonumber\\
 C&=\Phi(u_2,p_2),
 &D&=\Phi(2u_1+u_2,p_2),\nonumber\\
 E&=\Phi(u_2+u_3,p_2+p_3),
 &F&=\Phi(u_3,p_3).\label{n6:eq:vertex-coefficients}
\end{align}

\begin{proposition}[Vertex reflection system]\label{n6:prop:vertex-system}
Every central configuration satisfies
\begin{equation}\label{n6:eq:vertex-system-vector}
 M_V
 \begin{pmatrix}p_1\\p_2\\p_3\end{pmatrix}=0,
\end{equation}
where
\begin{equation}\label{n6:eq:vertex-matrix}
 M_V=
 \begin{pmatrix}
 A+B+H/2&B+H/2&H/2\\
 B+H/2&B+C+D+E+H/2&E+H/2\\
 H/2&E+H/2&E+F+H/2
 \end{pmatrix}.
\end{equation}
Its quadratic form is
\begin{equation}\label{n6:eq:vertex-Q}
 \begin{split}
 Q_V(X_1,X_2,X_3)={}&AX_1^2+B(X_1+X_2)^2
 +(C+D)X_2^2\\
 &{}+E(X_2+X_3)^2+FX_3^2
 +\frac H2(X_1+X_2+X_3)^2.
 \end{split}
\end{equation}
\end{proposition}

\begin{proof}
Fix $P_0$ at angle zero and put
\[
 s=p_1+p_2+p_3.
\]
The six angles are, modulo $2\pi$,
\begin{equation}\label{n6:eq:vertex-angles}
 0,\quad p_1\pm u_1,\quad
 p_1+p_2\pm(u_1+u_2),\quad \pi+s.
\end{equation}
The two signs in each paired expression account for four points, while the
first and last entries are $P_0,P_3$.

Using \eqref{eq:J} and $J_{2\pi-z}=J_z$, the restriction of the full pair
potential has the form
\begin{equation}\label{n6:eq:vertex-potential-restricted}
 \begin{split}
 \cU_V(p_1,p_2,p_3)={}&J_{u_1}(p_1)
 +J_{u_1+u_2}(p_1+p_2)\\
 &{}+J_{u_2}(p_2)+J_{2u_1+u_2}(p_2)\\
 &{}+J_{u_2+u_3}(p_2+p_3)+J_{u_3}(p_3)\\
 &{}+\widehat W(\pi+s)+\text{constant}.
 \end{split}
\end{equation}
For instance, the cross interaction between the two moving pairs in
\eqref{n6:eq:vertex-angles} has center displacement $p_2$ and
half-separations $u_1$ and $u_1+u_2$, hence contributes
\[
 J_{u_2}(p_2)+J_{2u_1+u_2}(p_2).
\]
The interaction between the two candidate fixed vertices contributes
$\widehat W(\pi+s)$.  By \eqref{eq:f},
\begin{equation}\label{n6:eq:antipodal-block}
 \frac{\dd}{\dd s}\widehat W(\pi+s)
 =f(\pi+s)=s\Phi(\pi,s)=sH.
\end{equation}
Differentiating \eqref{n6:eq:vertex-potential-restricted} and dividing by two
therefore gives
\begin{align}
 p_1A+(p_1+p_2)B+\frac{sH}{2}&=0,\label{n6:eq:vertex-eq1}\\
 (p_1+p_2)B+p_2(C+D)+(p_2+p_3)E+\frac{sH}{2}&=0,
 \label{n6:eq:vertex-eq2}\\
 (p_2+p_3)E+p_3F+\frac{sH}{2}&=0.\label{n6:eq:vertex-eq3}
\end{align}
These are exactly \eqref{n6:eq:vertex-system-vector}--\eqref{n6:eq:vertex-matrix},
and the rank-one decomposition \eqref{n6:eq:vertex-Q} follows by expansion.
\end{proof}

Both matrices satisfy Lemma~\ref{lem:avg}, including the $H/2$ factor
in the vertex matrix. Thus their fixed quadratic forms are concave in
the original gaps. The following two estimates supply the codimension-one
negative subspaces needed for the positive-direction criterion P.
\subsection{A universal negative direction for edge pairings}

\begin{proposition}[Edge negative direction]\label{n6:prop:edge-negative}
For every collision-free edge candidate,
\begin{equation}\label{n6:eq:edge-negative}
 Q_E(1,-1)=A+D+E+F\leq4g(\pi/2)
 =-\frac{3}{\sqrt2}<0.
\end{equation}
Thus every edge matrix $M_E$ has at least one negative eigenvalue.
\end{proposition}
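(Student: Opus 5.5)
The plan is to bound each divided difference by its central value and then use concavity of $g$ once, as a four-point Jensen inequality. The constraint \eqref{n6:eq:edge-sum} makes the mean of the four centers exactly $\pi/2$.

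First, evaluate the quadratic form \eqref{n6:eq:edge-Q} at $(X,Y)=(1,-1)$. The term $(B+C)(X+Y)^2$ vanishes, so
\[
 Q_E(1,-1)=(1,-1)M_E(1,-1)^T=A+D+E+F.
\]
This is the identity in the statement.

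Next, apply the upper bound $\Phi(z,h)\leq g(z)$ from \eqref{eq:phi-properties} to each of the four coefficients in \eqref{n6:eq:edge-coefficients}. This gives
\[
 A+D+E+F\leq g(u)+g(u+h)+g(v)+g(v+e).
\]
Each use of \eqref{eq:phi-properties} needs $(z,h)$ to lie in the domain of $\Phi$. This holds because the endpoints are $d_1,a_1$, then $d_1+h$ and $a_1+h$ (positive proper arc sums, or their complements), and similarly for the $v$ blocks. It is exactly the interval condition noted after \eqref{eq:J}. In particular the four centers $u$, $u+h$, $v$, $v+e$ all lie in $(0,2\pi)$.

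By Lemma~\ref{lem:phi}, $g$ is concave on $(0,2\pi)$. Jensen's inequality for four equally weighted points then gives
\[
 g(u)+g(u+h)+g(v)+g(v+e)\leq 4g\!\left(\frac{2u+2v+e+h}{4}\right)=4g(\pi/2),
\]
where the last equality uses \eqref{n6:eq:edge-sum}. Finally, $g(\pi/2)=-3\sqrt2/8$ from \eqref{eq:g}, so $4g(\pi/2)=-3/\sqrt2<0$.

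For the eigenvalue assertion, a strictly negative value of the quadratic form of the symmetric matrix $M_E$ at some vector implies, by the variational characterization, that its smallest eigenvalue is negative. The only point needing care is the domain check in the second step, which is where noncollision enters; the remaining steps are routine.
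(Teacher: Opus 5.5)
Your proposal is correct and is essentially the paper's own proof. You bound $A+D+E+F$ by $g(u)+g(u+h)+g(v)+g(v+e)$ using $\Phi(z,h)\leq g(z)$, then apply concavity of $g$ to four centers whose sum is $2\pi$ by \eqref{n6:eq:edge-sum}, which gives $4g(\pi/2)=-3/\sqrt2$; your explicit domain check and the variational remark for the eigenvalue claim fill in routine details the paper leaves implicit.
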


\begin{proof}
By \eqref{eq:phi-properties},
\[
 A+D+E+F
 \leq g(u)+g(u+h)+g(v)+g(v+e).
\]
The four centers on the right have sum $2\pi$ by
\eqref{n6:eq:edge-sum}.  Concavity of $g$ therefore gives
\[
 g(u)+g(u+h)+g(v)+g(v+e)\leq4g(\pi/2).
\]
Formula \eqref{eq:g} yields $g(\pi/2)=-3\sqrt2/8$, proving
\eqref{n6:eq:edge-negative}.
\end{proof}

\subsection{A universal negative plane for vertex pairings}

\begin{proposition}[Vertex negative plane]\label{n6:prop:vertex-negative-plane}
For every collision-free vertex candidate,
\begin{equation}\label{n6:eq:vertex-negative-plane}
 Q_V(X_1,X_2,X_3)<0
\end{equation}
for every nonzero vector in
\begin{equation}\label{n6:eq:H-plane}
 \cH=\{(X_1,X_2,X_3)\in\R^3:X_1+X_2+X_3=0\}.
\end{equation}
In particular, $M_V$ has at least two negative eigenvalues.
\end{proposition}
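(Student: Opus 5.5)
On the plane $\cH$ the antipodal term drops out, since $\tfrac H2(X_1+X_2+X_3)^2=0$. This matters because $H=\Phi(\pi,s)$ can be positive and is the only coefficient with center exactly $\pi$. On $\cH$ we also have $X_1+X_2=-X_3$ and $X_2+X_3=-X_1$. Substituting into \eqref{n6:eq:vertex-Q} gives
\[
 Q_V|_{\cH}=\alpha X_1^2+\beta X_2^2+\gamma X_3^2,\qquad
 \alpha=A+E,\quad \beta=C+D,\quad \gamma=B+F,
\]
a diagonal form in three variables subject to one linear constraint. The plan is to bound $\alpha,\beta,\gamma$ above using $\Phi(z,h)\leq g(z)$ from \eqref{eq:phi-properties}, and then to show that the resulting binary form is negative definite.

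First I bound the two outer coefficients. By \eqref{n6:eq:vertex-sum}, $u_2+u_3=\pi-u_1$ and $u_1+u_2=\pi-u_3$, so
\[
 \alpha\leq g(u_1)+g(\pi-u_1),\qquad \gamma\leq g(u_3)+g(\pi-u_3).
\]
By the second inequality in \eqref{eq:fivepair}, both are at most $2g(\pi/2)<0$. Moreover, each is very negative when $u_1$ (respectively $u_3$) is small, since $g(t)\sim-1/(4\sin^3(t/2))$.

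Next comes the middle coefficient. Reflection about $\pi$ gives $g(2u_1+u_2)=g(u_2+2u_3)$, hence
\[
 \beta\leq g(u_2)+g(2u_1+u_2).
\]
This need not be negative. Near $u_2\to\pi$, $u_1,u_3\to0$ it tends to $2g(\pi)=7/4$. So the argument cannot be purely diagonal. I use the constraint $X_2=-(X_1+X_3)$ together with the elementary bound
\[
 \beta X_2^2\leq \beta^+\bigl((1+\mu)X_1^2+(1+\mu^{-1})X_3^2\bigr),\qquad \mu>0,
\]
with $\mu$ chosen depending on the ratio of $u_1$ and $u_3$. This reduces negativity on $\cH$ to two scalar inequalities. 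Equivalently, for $\alpha,\gamma<0<\beta$, negative definiteness on $\cH$ is exactly $\alpha\beta+\beta\gamma+\gamma\alpha>0$, i.e. $\beta^{-1}>-(\alpha^{-1}+\gamma^{-1})$.

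The main obstacle is this last inequality when $\beta>0$. There $2u_1+u_2$ is close to $\pi$, so $u_2$ is close to $\pi$ and $u_1+u_3$ is small. I would try to exploit that the negativity of $\alpha$ and $\gamma$ comes from $g(u_1)$ and $g(u_3)$ at the same small angles that push $g(2u_1+u_2)$ and $g(u_2)$ toward $g(\pi)=7/8$. Concretely, with $\mu=1$ I aim to prove
\[
 g(u_1)+g(\pi-u_1)+2g(u_2)+2g(\pi-u_2+u_3-u_1)<0
\]
and its mirror image. I would rewrite the arguments so that each $g$ at a near-$\pi$ angle is paired with a $g$ at its complementary small angle. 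Then I apply the weighted scalar bounds \eqref{eq:two}, \eqref{eq:three} and \eqref{eq:fivepair} in the form $g(t)+kg(\pi-t)<0$, using monotonicity of $g$ on $(0,\pi)$ from Lemma~\ref{lem:phi} to compare $u_1$ with $\pi-u_2$ and similar pairs. If a single $\mu$ does not close all cases, I would split according to whether $u_1\leq u_3$ (taking $\mu\ge1$ or $\mu\le1$ accordingly) and redo the same pairing. Strictness, and hence $M_V$ having at least two negative eigenvalues by the variational argument of Lemma~\ref{lem:inertia}, follows from the strict inequalities in these scalar lemmas.
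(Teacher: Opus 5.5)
Your reduction matches the paper's: on $\cH$ the form is $\alpha X_1^2+\beta X_2^2+\gamma X_3^2$, and at $p=0$ your $\alpha,\beta,\gamma$ are exactly the paper's $-X_0$, $K$, $-Y_0$ (the paper writes the form in the variables $X_1$ and $X_1+X_2$). Your treatment of the outer coefficients is correct.

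\textbf{The gap.} The gap is the case $\beta>0$. This is the whole content of the proposition, and you only announce it. The tool you name there does not deliver it. Monotonicity does give $g(u_2)\le g(\pi-u_1)$. But $g(2u_1+u_2)=g(\pi-|u_1-u_3|)$ is strictly larger than $g(\pi-u_1)$ whenever $u_3<2u_1$, and strictly larger than $g(\pi-u_3)$ whenever $u_1<2u_3$. So the single strongly negative term in each of your two target inequalities cannot be paired termwise against both near-$\pi$ terms. The fallback $g(2u_1+u_2)\le g(\pi)$ is too weak. For example, at $(u_1,u_2,u_3)=(0.31\pi,0.64\pi,0.05\pi)$ the upper bound for $\beta$ is positive, yet the termwise bound for $\alpha+2\beta$, namely $g(u_1)+3g(\pi-u_1)+2g(\pi)$, is about $0.04>0$.

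Two smaller points:
\begin{itemize}
\item Your heuristic that $\beta>0$ forces $u_2$ near $\pi$ and $u_1+u_3$ small is inaccurate. The choice $u_1=u_3=\pi/5$, $u_2=3\pi/5$ gives $g(u_2)+g(2u_1+u_2)=g(3\pi/5)+g(\pi)>0$.
\item Your displayed target has a slip. Since $\pi-u_2+u_3-u_1=2u_3$, it evaluates $g(2u_3)$, whereas the reflected argument should be $\pi-u_1+u_3=u_2+2u_3$.
\end{itemize}

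\textbf{The missing idea.} Bound the \emph{pair} $g(u_2)+g(2u_1+u_2)$ by midpoint concavity rather than termwise. Since $g(2u_1+u_2)=g(u_2+2u_3)$, the arguments $u_2$ and $u_2+2u_3$ have midpoint $\pi-u_1$. The arguments $u_2$ and $2u_1+u_2$ have midpoint $\pi-u_3$. Hence
\[
 \beta\le g(u_2)+g(2u_1+u_2)<\min\{2g(\pi-u_1),\,2g(\pi-u_3)\}.
\]
If $\beta>0$, then with your choice $\mu=1$, \eqref{eq:fivepair} gives
\[
 \alpha+2\beta<g(u_1)+5g(\pi-u_1)<0,\qquad
 \gamma+2\beta<g(u_3)+5g(\pi-u_3)<0 .
\]
This is precisely the paper's inequality $K<\tfrac12\min\{X_0,Y_0\}$. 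No case split in $\mu$ is needed. With this step inserted, your argument coincides with the paper's proof; the paper then phrases the conclusion through the determinant, which is implied by this bound.
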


\begin{proof}
Every coefficient in the rank-one decomposition \eqref{n6:eq:vertex-Q} is a
value of $\Phi$, and every multiplier is a square.  Thus
\eqref{eq:phi-properties} gives the quadratic-form comparison
\begin{equation}\label{n6:eq:loewner}
 Q_V(u,p;X)\leq Q_V(u,0;X)
 \qquad\text{for all }X\in\R^3.
\end{equation}
It is enough to prove negativity at $p=0$.

Write
\[
 a=u_1,
 \qquad b=u_2,
 \qquad c=u_3,
 \qquad a+b+c=\pi.
\]
On $\cH$, set
\[
 \alpha=X_1,
 \qquad \beta=X_1+X_2,
 \qquad X_3=-\beta.
\]
Substituting the symmetric-point values of
\eqref{n6:eq:vertex-coefficients} into \eqref{n6:eq:vertex-Q} gives
\begin{equation}\label{n6:eq:vertex-plane-form}
 Q_V(u,0;X)
 =-X_0\alpha^2-Y_0\beta^2+K(\beta-\alpha)^2,
\end{equation}
where
\begin{align}
 X_0&=-g(a)-g(\pi-a)>0,\label{n6:eq:X0}\\
 Y_0&=-g(c)-g(\pi-c)>0,\label{n6:eq:Y0}\\
 K&=g(b)+g(\pi+a-c).\label{n6:eq:K0}
\end{align}
The positivity in \eqref{n6:eq:X0}--\eqref{n6:eq:Y0} follows from
\eqref{eq:fivepair}.

If $K\leq0$, equation \eqref{n6:eq:vertex-plane-form} is immediately strictly
negative for every $(\alpha,\beta)\neq(0,0)$.  Suppose $K>0$.  Strict
concavity and symmetry of $g$ give two midpoint bounds.  First,
\[
 b+(\pi+a-c)=2(\pi-c),
 \qquad
 K<2g(\pi-c).
\]
Second, $g(\pi+a-c)=g(\pi-a+c)$ and
\[
 b+(\pi-a+c)=2(\pi-a),
 \qquad
 K<2g(\pi-a).
\]
Because $K>0$, both $g(\pi-a)$ and $g(\pi-c)$ are positive.  Applying
\eqref{eq:fivepair} at $a$ and $c$ yields
\[
 X_0>4g(\pi-a),
 \qquad
 Y_0>4g(\pi-c).
\]
Consequently,
\begin{equation}\label{n6:eq:K-bound}
 K<\frac12\min\{X_0,Y_0\}
 \leq\frac{X_0Y_0}{X_0+Y_0}.
\end{equation}
The quadratic form
\[
 X_0\alpha^2+Y_0\beta^2-K(\beta-\alpha)^2
\]
has positive diagonal coefficients and determinant
$X_0Y_0-K(X_0+Y_0)>0$ by \eqref{n6:eq:K-bound}; it is therefore positive
definite.  Equation \eqref{n6:eq:vertex-plane-form} is strictly negative.
Finally, \eqref{n6:eq:loewner} transfers this conclusion to arbitrary defects
$p$.
\end{proof}

\begin{corollary}[Cyclically opposite antipodal-pair rigidity]\label{n6:cor:antipodal-rigidity}
Suppose a collision-free equal-mass $1+6$ coorbital central configuration
contains an antipodal pair $P_i,P_{i+3}$ in its cyclic ordering.
Equivalently, each open semicircle determined by that pair contains two of
the remaining satellites.  Then the configuration is reflection-symmetric
about the diameter through that pair.
\end{corollary}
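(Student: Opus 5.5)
Take the vertex--vertex candidate through the given pair, with $P_0=P_i$ and $P_3=P_{i+3}$, in the coordinates \eqref{n6:eq:vertex-order}--\eqref{n6:eq:vertex-variables}. Antipodality of $P_0,P_3$ says the total arc from $P_0$ to $P_3$ is $\pi$:
\[
 d_1+d_2+d_3=\pi=a_1+a_2+a_3 .
\]
This is exactly $s=p_1+p_2+p_3=0$. The hypothesis that each open semicircle contains two satellites says the pair is cyclically opposite, so this chart applies and all six gaps are positive. Note that $s=0$ alone does not give $p_1=p_2=p_3=0$. We must still show that the remaining defect vector, which now lies in the plane $\cH$ of \eqref{n6:eq:H-plane}, vanishes.

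Take the scalar product of the vertex system \eqref{n6:eq:vertex-system-vector} with the vector $p=(p_1,p_2,p_3)^T$ itself. This gives $Q_V(p_1,p_2,p_3)=p^TM_Vp=0$. Since $p\in\cH$, Proposition~\ref{n6:prop:vertex-negative-plane} says $Q_V$ is strictly negative at every nonzero vector of $\cH$. Here $Q_V$ is evaluated with the actual coefficients $\Phi(\cdot,\text{actual defects})$, and the proposition covers arbitrary defects through \eqref{n6:eq:loewner}. Hence $p=0$.

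One can also argue with the equations directly. Since $s=0$, the $H$ terms drop out of \eqref{n6:eq:vertex-eq1}--\eqref{n6:eq:vertex-eq3}. Combining them with weights $p_1$, $p_2$, $p_3$ reproduces $Q_V(p)=0$, so either route works.

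With $p_1=p_2=p_3=0$, the gaps read $(u_1,u_2,u_3,u_3,u_2,u_1)$. By \eqref{n6:eq:vertex-zero}, the configuration is then symmetric about the diameter through $P_i$ and $P_{i+3}$.

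The only delicate point is checking that the restricted potential and the $\Phi$ arguments used in Proposition~\ref{n6:prop:vertex-system} stay inside their domains at the actual configuration. This holds because the configuration is collision-free and every block endpoint is a proper arc sum or its complement, as required after \eqref{eq:J}. We also need that the negative-plane estimate was proved for nonzero defects, not only at the symmetric point, and it was. No computer verification is needed.
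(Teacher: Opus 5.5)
Your proof is correct and is essentially the paper's argument: antipodality of $P_0,P_3$ gives $p_1+p_2+p_3=0$, so $p\in\mathcal H$; the vertex system gives $Q_V(p)=p^TM_Vp=0$; and Proposition~\ref{n6:prop:vertex-negative-plane}, which holds at the actual defects by \eqref{n6:eq:loewner}, then forces $p=0$. Your alternative route, weighting \eqref{n6:eq:vertex-eq1}--\eqref{n6:eq:vertex-eq3} by $p_1,p_2,p_3$ with $s=0$, is the same identity written out term by term.
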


\begin{proof}
By cyclic relabelling, the assumed pair is $P_0,P_3$, so the vertex
construction applies.  Their separation is
\[
 d_1+d_2+d_3=\pi+(p_1+p_2+p_3),
\]
so antipodality gives $p_1+p_2+p_3=0$.  Centrality gives $M_Vp=0$, hence
$Q_V(p)=0$.  Proposition~\ref{n6:prop:vertex-negative-plane} forces $p=0$,
which is exactly the reflection condition \eqref{n6:eq:vertex-zero}.
\end{proof}

\subsection{The large-gap region}\label{n6:sec:largegap}

A gap of size at least $\pi$ implies reflection symmetry by the following
analytic argument.

\begin{lemma}[Complementary divided differences]\label{n6:lem:phi-complement}
If
\begin{equation}\label{n6:eq:phi-complement-domain}
 0\leq\eta<z<\pi-\eta,
\end{equation}
then
\begin{equation}\label{n6:eq:phi-complement}
 \Phi(z,\eta)+\Phi(\pi-z,\eta)<0.
\end{equation}
\end{lemma}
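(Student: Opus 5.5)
The plan is to reduce \eqref{n6:eq:phi-complement} to the pointwise scalar inequality $g(t)+g(\pi-t)<0$, recorded in \eqref{eq:fivepair}, by averaging over the divided-difference integral. By \eqref{eq:phi},
\[
 \Phi(z,\eta)+\Phi(\pi-z,\eta)
 =\frac12\int_{-1}^1\bigl\{g(z+\eta v)+g(\pi-z+\eta v)\bigr\}\dd v .
\]
First I check that both divided differences are defined. The hypothesis \eqref{n6:eq:phi-complement-domain} gives $z-\eta>0$ and $\pi-z-\eta>0$. It also gives $z+\eta<\pi$ and $\pi-z+\eta<\pi$. So both intervals $[z-\eta,z+\eta]$ and $[\pi-z-\eta,\pi-z+\eta]$ lie in $(0,\pi)\subset(0,2\pi)$, as Lemma~\ref{lem:phi} requires.

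The next step is to pair the integrand correctly. The second term is even in $v$, so I substitute $v\mapsto -v$ in it. The integrand then becomes
\[
 g(z+\eta v)+g(\pi-(z+\eta v)),
\]
which has the form $g(t)+g(\pi-t)$ with $t=z+\eta v\in(0,\pi)$. By the second inequality in \eqref{eq:fivepair} (midpoint concavity of $g$), this quantity is at most $2g(\pi/2)=-3\sqrt2/4<0$ for every $v\in[-1,1]$. Integrating, the sum is at most $2g(\pi/2)<0$, which gives the strict inequality with an explicit margin.

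The case $\eta=0$ needs no separate argument: there $\Phi(z,0)=g(z)$, and the claim is the same scalar inequality at $t=z$. The only point needing care is the domain verification in the first step, which ensures that every $t=z+\eta v$ lies in $(0,\pi)$. That verification is exactly what the hypothesis $\eta<\min\{z,\pi-z\}$ supplies, so I expect no real obstacle.
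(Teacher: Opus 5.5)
Your proof is correct and is essentially the paper's argument: substituting $v\mapsto -v$ in the second integral turns the integrand into $g(t)+g(\pi-t)$ with $t=z+\eta v\in(0,\pi)$, and the second inequality in \eqref{eq:fivepair} makes it negative, with the explicit margin $2g(\pi/2)$. One wording fix: the substitution is justified by the symmetry of the interval $[-1,1]$ (equivalently, by evenness of $\Phi$ in $h$), not by the second term being even in $v$, which it generally is not.
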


\begin{proof}
Changing $s$ to $-s$ in the second integral in \eqref{eq:phi} gives
\[
 \Phi(z,\eta)+\Phi(\pi-z,\eta)
 =\frac12\int_{-1}^{1}
 \bigl[g(z+\eta s)+g(\pi-z-\eta s)\bigr]\dd s.
\]
Condition \eqref{n6:eq:phi-complement-domain} puts $z+\eta s$ in $(0,\pi)$,
so the integrand is strictly negative by \eqref{eq:fivepair}.
\end{proof}

\begin{proposition}[Large-gap symmetry]\label{n6:prop:large-gap}
If a collision-free equal-mass $1+6$ coorbital central configuration has a
gap $h\geq\pi$, then it is reflection-symmetric of edge--edge type, with an
axis bisecting $h$ and the opposite gap.
\end{proposition}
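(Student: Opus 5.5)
Place the large gap in the role of $h$ in the edge--edge chart \eqref{n6:eq:edge-order}, so the opposite gap is $e$, and use the edge reflection system of Proposition~\ref{n6:prop:edge-system}. The plan is to show that $M_E$ is negative definite. Then \eqref{n6:eq:edge-system-vector} forces $p=q=0$, which is exactly the edge--edge reflection \eqref{n6:eq:edge-zero}. By the form \eqref{n6:eq:edge-Q}, negative definiteness follows once the three multipliers $A+D$, $B+C$, and $E+F$ are all negative. The case of two negative and one zero multiplier also suffices, since the squares $X^2$, $(X+Y)^2$, $Y^2$ pairwise span.

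First I bound $A+D$. We have $h\geq\pi$ and $2u+2v+e=2\pi-h\leq\pi$, so $u<\pi/2$. I want to write $\Phi(u+h,p)=\Phi(2\pi-u-h,p)$ and compare it with $\Phi(\pi-u,p)$. Put $z=u$ and $w=2\pi-u-h\leq\pi-u$. For fixed half-width, $\Phi(\cdot,p)$ is increasing below $\pi$ by Lemma~\ref{lem:phi}, and $w\leq\pi-u<\pi$. Hence $A+D\leq\Phi(u,p)+\Phi(\pi-u,p)$, which is negative by Lemma~\ref{n6:lem:phi-complement} with $\eta=|p|<u<\pi-|p|$. Domain care is needed here: the divided difference at center $w$ with half-width $|p|$ must lie in its natural domain, which holds because $w>u>|p|$.

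The same comparison handles $E+F$. Use $\Phi(v,q)$ together with $\Phi(v+e,q)$, noting $v+e\leq\pi-v$ because $2v+e\leq\pi$. Here the monotonicity goes the favorable way, giving $\Phi(v+e,q)\leq\Phi(\pi-v,q)$, and then Lemma~\ref{n6:lem:phi-complement} applies.

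The main obstacle is $B+C=\Phi(u+v,p+q)+\Phi(u+v+e,p+q)$. Both centers $u+v$ and $u+v+e$ lie in $(0,\pi]$ and sum to at most $\pi$, so the same trick gives $\Phi(u+v+e,\cdot)\leq\Phi(\pi-(u+v),\cdot)$. This is valid provided $|p+q|<u+v$, which holds because $u+v\pm(p+q)$ are the arcs $d_1+d_2$ and $a_1+a_2$. Lemma~\ref{n6:lem:phi-complement} then gives $B+C<0$. When $h=\pi$ exactly, the inequalities $w\leq\pi-u$ become equalities but the lemma's strict conclusion still holds, so the boundary case is included. With all three multipliers strictly negative, $Q_E$ is negative definite, $M_E$ is invertible, and reflection symmetry follows. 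I expect the only real care to be in verifying the domain conditions $\eta<z<\pi-\eta$ in each application.
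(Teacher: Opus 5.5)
Your proposal is correct and follows the paper's proof essentially step for step. With $S=2\pi-h\leq\pi$, you use the reflection identity $\Phi(2\pi-z,\cdot)=\Phi(z,\cdot)$ together with monotonicity of $\Phi$ in the center below $\pi$ to bound $D$, $C$, $F$ by $\Phi(\pi-u,p)$, $\Phi(\pi-u-v,p+q)$, $\Phi(\pi-v,q)$; Lemma~\ref{n6:lem:phi-complement} then makes all three multipliers of $Q_E$ strictly negative, so $M_E$ is negative definite and $p=q=0$, exactly as in the paper (your aside that one vanishing multiplier would also suffice is correct but unused).
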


\begin{proof}
Use the edge ordering \eqref{n6:eq:edge-order} with the specified gap labelled
$h$, and set
\[
 S=2\pi-h=2u+2v+e\leq\pi.
\]
By \eqref{eq:phi-properties},
\[
 D=\Phi(u+h,p)=\Phi(S-u,p).
\]
Since $S-u\leq\pi-u$ and $z\mapsto\Phi(z,p)$ is increasing below $\pi$,
Lemma~\ref{n6:lem:phi-complement} gives
\begin{equation}\label{n6:eq:large-A}
 A+D\leq\Phi(u,p)+\Phi(\pi-u,p)<0.
\end{equation}
For $z=u+v$, the identity $S=2z+e$ gives
$z+e=S-z\leq\pi-z$, so similarly
\begin{equation}\label{n6:eq:large-B}
 B+C<0.
\end{equation}
Finally, $2v+e=S-2u\leq\pi$, hence $v+e\leq\pi-v$ and
\begin{equation}\label{n6:eq:large-E}
 E+F<0.
\end{equation}
The strict natural-domain inequalities needed in
Lemma~\ref{n6:lem:phi-complement} follow from positivity of the six gaps.
Equations \eqref{n6:eq:large-A}--\eqref{n6:eq:large-E} make every coefficient in
\eqref{n6:eq:edge-Q} strictly negative.  Thus $M_E$ is negative definite.
Centrality and \eqref{n6:eq:edge-system-vector} force $p=q=0$, producing the
claimed reflection.
\end{proof}

\subsection{The positive-direction cover}\label{n6:sec:certificate}
Proposition~\ref{prop:collars} gives $R_i>1/128$. After the large-gap
case, every remaining solution therefore lies in the closed rational domain
\begin{equation}\label{n6:eq:P-polytope}
 \mathcal P_6=\left\{R\in[1/128,1]^6:\sum_{i=0}^5R_i=2\right\}.
\end{equation}
Here the whole domain is used: no ordering of the gaps or prescribed
position for a maximal gap is imposed.

\begin{proposition}[Finite verification of positive quadratic directions]\label{n6:prop:certificate}
For every $R\in\mathcal P_6$, one of the three edge matrices or three
vertex matrices has an integer direction $w$ with $Q(R;w)>0$.
The archived subdivision record contains 12,485 nodes: 6,242 splits
and 6,243 final subregions, all verified by criterion P. Its maximum
depth is 30, and every final subregion is verified.
\end{proposition}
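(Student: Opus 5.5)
The proof is a finite computer-assisted verification, organized as in Proposition~\ref{prop:coverlogic} with criterion P only. Start from the single closed region $\mathcal P_6$, which is the fixed-sum section $P_B$ of the box $B=[1/128,1]^6$. For each of the six candidate matrices (three edge pairings $M_E$ and three vertex pairings $M_V$, obtained by cyclic shifts of \eqref{n6:eq:edge-order} and \eqref{n6:eq:vertex-order}), express every divided-difference argument of \eqref{n6:eq:edge-coefficients} and \eqref{n6:eq:vertex-coefficients} as an affine function of $R$. By Lemma~\ref{lem:avg}, $Q(R;w)$ is concave on $\mathcal P_6$ for each fixed integer $w$. The codimension-one negative subspaces come from Proposition~\ref{n6:prop:edge-negative} (direction $(1,-1)$) and Proposition~\ref{n6:prop:vertex-negative-plane} (the plane $\cH$). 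Hence, by Lemmas~\ref{lem:inertia} and~\ref{lem:PN}, a strictly positive lower bound for $Q(R;w)$ at every vertex of a subregion proves that the chosen matrix is invertible throughout that subregion.

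The construction proceeds by recursive bisection. For a current box, first apply the sum tightenings. Then enumerate the vertices of $P_B$ via Lemma~\ref{lem:covergeometry}. Next, try each of the six candidate matrices, choosing $w$ heuristically: numerically compute the positive eigenvector of the matrix at the center $c=\ell+\alpha(u-\ell)$, then round it to a small integer vector. Check $w^TLw$ at every vertex, using rigorous lower bounds for each $\Phi$. These bounds come from the chord model \eqref{eq:chordlower} or from composite trapezoid sums of the concave integrand. The values of $g$ are evaluated through \eqref{eq:G} with rational sine enclosures, Machin bounds on $\pi$, and outward rounding. If some candidate succeeds, the box becomes a final P-node. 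Otherwise, split the longest coordinate interval at a dyadic midpoint into two closed halves. Record everything in a tree, and let an independent checker confirm reachability, the split relations, the vertex sets, and every strict inequality. The stated counts (12,485 nodes, depth 30) are then simply read off the archived record.

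One must check that no P-evaluation leaves the natural domain of $\Phi$. On $\mathcal P_6$ every gap is at least $1/128$ and at most $1$ in normalized units. Every central argument $z$ is a proper arc sum or its complement, and the half-width $|h|$ is at most half of the difference of two such sums. The interval $[z-|h|,z+|h|]$ therefore lies in $[\pi/128,2\pi-\pi/128]$, so all the coefficients are continuous and concave on each closed subregion, including boundary vertices where $R_i=1$. The large-gap boundary is also harmless. Proposition~\ref{n6:prop:large-gap} already handles solutions with a gap of at least $\pi$, but the cover includes $R_i=1$ anyway, so those points need no special treatment.

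The main obstacle is the termination of the subdivision near places where the positive eigenvalue of every candidate matrix is small. These are configurations close to symmetric solutions, where some candidate matrix is nearly singular, and regions near the lower gap bound, where the $g$ values are large and negative and the chord bounds are loose. Near a nearly symmetric configuration the positive direction of a different pairing usually still has a margin, so the first remedy is to try all six matrices and refine $w$ per box. Where the margin is still too thin, the remedy is to increase the number of trapezoid panels or the chord-model resolution before splitting further. The fact that the recorded tree closes at depth 30 indicates that these remedies suffice.
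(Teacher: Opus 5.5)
Your proposal is correct and follows essentially the same route as the paper. Both use criterion P on the whole domain $\mathcal P_6$, concavity of each fixed quadratic form via Lemma~\ref{lem:avg}, the edge negative direction and vertex negative plane to feed Lemma~\ref{lem:inertia}, exact vertex enumeration with rigorous lower bounds for the divided differences, and node counts read off the checked record. The only difference is that the paper commits to one specific lower model (chord interpolation of rational knot values $\gamma_j<g(\pi r_j)$ at $r_j=j/1024$ on $[1/128,2-1/128]$, built from a Machin lower bound for $\pi$ and a truncated sine series), and your discussion of how the tree and directions might be found plays no role in the proof.
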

\begin{proof}[Exact-rational verification]
The verification program reconstructs the initial domain and every closed
split using rational arithmetic. For each final subregion, the record
specifies a pairing and a primitive integer direction, meaning that the
components of the direction have greatest common divisor one. The program
enumerates all vertices of the fixed-sum section, checks the recorded vertex
count, and proves positivity of the chosen quadratic form at every vertex.
Criterion P then gives positivity throughout the section.

For clarity, the lower model is specified by its parameters. Let
$\underline\pi<\pi$ be obtained from Machin's identity using twelve
alternating terms for a lower $\arctan(1/5)$ bound and three terms for
an upper $\arctan(1/239)$ bound. Set
\[
 S_{15}(x)=\sum_{k=0}^7\frac{(-1)^kx^{2k+1}}{(2k+1)!},\qquad
 r_j=j/1024,\quad 8\leq j\leq2040,
\]
and $\rho_j=\min\{r_j,2-r_j\}$. The rational knot ordinate is
\[
 \gamma_j=10^{-8}\left\lfloor10^8
 G\left(S_{15}(\underline\pi\rho_j/2)\right)\right\rfloor.
\]
The checker verifies each sine lower bound belongs to $(0,1)$.
Alternating remainders and \eqref{eq:G} imply
$\gamma_j<g(\pi r_j)$. Chord interpolation gives a strict lower
model on $[1/128,2-1/128]$, and \eqref{eq:chordlower} provides every
required divided difference, including equal endpoints.

The smallest recorded exact lower margin is
$5385322223/51360400000000>0$. The verification input specifies every initial-domain parameter, lower-bound
parameter, integer direction, and subdivision relation. Every asserted
inequality is checked without floating-point assumptions. The two verification implementations use the same subdivision
record. One uses the grouped reflection formulas; the other reconstructs
the ungrouped pair potential.
\end{proof}

\begin{proposition}[Six-satellite symmetry]\label{n6:prop:symmetry}
Every collision-free six-satellite angular solution has a reflection axis.
\end{proposition}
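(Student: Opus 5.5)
The plan is to assemble Proposition~\ref{n6:prop:symmetry} from the pieces already established, following the pattern of Proposition~\ref{n5:prop:symmetry}. Take a collision-free six-satellite angular solution, written with gaps $x_0,\ldots,x_5$ and normalized gaps $R_i=x_i/\pi$. There are two cases, according to the size of the largest gap.

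\emph{Large gap.} Suppose some gap satisfies $x_i\geq\pi$. Relabel cyclically so that this gap is $h$ in the edge ordering \eqref{n6:eq:edge-order}. Proposition~\ref{n6:prop:large-gap} then gives an edge--edge reflection axis directly.

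\emph{All gaps below $\pi$.} Otherwise every $R_i<1$. By Proposition~\ref{prop:collars}, every $R_i>1/128$, and the $R_i$ sum to $2$. So $R\in\mathcal P_6$. No ordering normalization is needed, because $\mathcal P_6$ is the full symmetric domain. Proposition~\ref{n6:prop:certificate} supplies one of the six candidate matrices, $M_E$ for one of the three edge pairings or $M_V$ for one of the three vertex pairings, together with an integer direction $w$ with $Q(R;w)>0$.

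We then show this matrix is invertible using Lemma~\ref{lem:inertia}.
\begin{itemize}
\item For an edge candidate, $M_E$ is $2\times2$. It is strictly negative on the line spanned by $(1,-1)$ by Proposition~\ref{n6:prop:edge-negative}, which is a subspace of codimension one.
\item For a vertex candidate, $M_V$ is $3\times3$. It is strictly negative on the plane $\cH$ by Proposition~\ref{n6:prop:vertex-negative-plane}, again of codimension one.
\end{itemize}
In either case the positive direction $w$ forces exactly one positive eigenvalue, so the matrix is nonsingular. Propositions~\ref{n6:prop:edge-system} and~\ref{n6:prop:vertex-system} give $M_E(p,q)^T=0$ or $M_V(p_1,p_2,p_3)^T=0$ at every stationary point. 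Nonsingularity forces the defects to vanish. By \eqref{n6:eq:edge-zero} or \eqref{n6:eq:vertex-zero}, this is exactly the reflection about the chosen axis. Undoing the cyclic relabeling preserves the existence of the axis, as in Proposition~\ref{prop:coverlogic}.

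The only delicate point is confirming that the solution's parameters lie in the domain where the certificate applies. The cover is stated on the closed polytope $\mathcal P_6$, while the solution is strictly interior: $R_i>1/128$ and $R_i<1$. Concavity of the fixed quadratic forms (Lemma~\ref{lem:avg}) together with criterion P (Lemma~\ref{lem:PN}) gives positivity on each final closed subregion containing the solution. This region is reached through the closed splits, as in Proposition~\ref{prop:coverlogic}. Nothing further is needed, since all computational content is in Proposition~\ref{n6:prop:certificate}. The main logical obstacle, namely the codimension-one negativity for both pairing types, is already settled by the two preceding propositions.
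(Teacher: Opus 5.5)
Your proposal is correct and follows the paper's proof essentially step for step: you treat the large-gap case by Proposition~\ref{n6:prop:large-gap}, place the remaining solutions in $\mathcal P_6$ via Proposition~\ref{prop:collars}, take a positive direction from Proposition~\ref{n6:prop:certificate}, and get invertibility from Lemma~\ref{lem:inertia} using Proposition~\ref{n6:prop:edge-negative} or Proposition~\ref{n6:prop:vertex-negative-plane}. Your closing remark about closed subregions only restates what Proposition~\ref{n6:prop:certificate} (with Lemma~\ref{lem:PN} and Proposition~\ref{prop:coverlogic}) already provides, so it is harmless but not needed.
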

\begin{proof}
A gap at least $\pi$ is covered by
Proposition~\ref{n6:prop:large-gap}. Otherwise
Proposition~\ref{prop:collars} places the solution in $\mathcal P_6$.
The finite covering verification gives a positive direction for a
candidate matrix.
For an edge candidate the negative direction is
Proposition~\ref{n6:prop:edge-negative}; for a vertex candidate the
negative plane is Proposition~\ref{n6:prop:vertex-negative-plane}.
Lemma~\ref{lem:inertia} makes the selected matrix invertible. Its
exact matrix equation forces the corresponding reflection defect to vanish.
\end{proof}

\section{Reflection symmetry for seven satellites}\label{sec:n7}
With an odd number of satellites a reflection fixes exactly one of them:
its fixed set on the circle has two points, and the number of fixed
satellites must be odd. For seven satellites there are consequently seven
candidate pairings. Each produces a three-dimensional reflection matrix
with a fixed negative plane. The finite verification combines matrix criteria with
original-force exclusions.
\subsection{Gap defects and cumulative defects}
Choose a candidate fixed satellite $P_0$. Write the seven gaps starting there as
\begin{equation}\label{n7:eq:param}
 (u_1+p_1,u_2+p_2,u_3+p_3,c,u_3-p_3,u_2-p_2,u_1-p_1),
\end{equation}
where
\[
 u_i>|p_i|,\qquad c=2\pi-2(u_1+u_2+u_3)>0.
\]
Define
\begin{equation}\label{n7:eq:st}
 s_i=\sum_{k=1}^iu_k,\qquad t_i=\sum_{k=1}^ip_k\quad (i=1,2,3).
\end{equation}
Thus $0<s_1<s_2<s_3<\pi$. The seven angular positions are
\begin{equation}\label{n7:eq:angles}
 0,\quad s_1+t_1,\quad s_2+t_2,\quad s_3+t_3,
 \quad 2\pi-s_3+t_3,\quad2\pi-s_2+t_2,\quad2\pi-s_1+t_1.
\end{equation}
The chosen reflection defect vanishes precisely when $t=0$, equivalently
$p=0$. Cyclic shifts give seven candidate pairings. Every reflection of
seven distinct points fixes exactly one satellite, so these candidates
include every possible reflection symmetry.

\subsection{All twenty-one point pairs}
Set $J_z(h)=\widehat W(z+h)+\widehat W(z-h)$, with the entire interval
$[z-|h|,z+|h|]$ contained in $(0,2\pi)$. Then
\begin{equation}\label{n7:eq:J}
 J_z'(h)=2h\Phi(z,h).
\end{equation}
The three within-pair interactions of \eqref{n7:eq:angles} do not depend on $t$.
The remaining eighteen interactions give nine two-term blocks:
\begin{equation}\label{n7:eq:restricted}
 \U_s(t)=\sum_{i=1}^3J_{s_i}(t_i)
 +\sum_{1\leq i<j\leq3}
 \bigl[J_{s_j-s_i}(t_j-t_i)+J_{s_i+s_j}(t_j-t_i)\bigr]
 +\sum_{i=1}^3\widehat W(2s_i).
\end{equation}
The first sum contains the six interactions with $P_0$; each summand in
the second sum contains the four interactions between two moving pairs.
Every divided-difference endpoint is a nonempty proper sum of positive
cyclic gaps, possibly after replacing an arc by its complement. This checks
the full interval-domain condition in every use of \eqref{n7:eq:J}.

Put
\begin{align}
 a_i&=\Phi(s_i,t_i),\label{n7:eq:ai}\\
 b_{ij}&=\Phi(s_j-s_i,t_j-t_i)+\Phi(s_i+s_j,t_j-t_i).
 \label{n7:eq:bij}
\end{align}
Let $e_i$ be the standard coordinate vectors and define
\begin{equation}\label{n7:eq:L}
 L(s,t)=\operatorname{diag}(a_1,a_2,a_3)
       +\sum_{i<j}b_{ij}(e_j-e_i)(e_j-e_i)^T.
\end{equation}

\begin{proposition}[Reflection system]\label{n7:prop:matrix}
Every solution of \eqref{eq:cc} satisfies $L(s,t)t=0$ for every candidate
pairing. The quadratic form is
\begin{equation}\label{n7:eq:quadratic}
 \mathcal Q_s(t;Z)=\sum_{i=1}^3a_iZ_i^2
                    +\sum_{i<j}b_{ij}(Z_j-Z_i)^2.
\end{equation}
Moreover
\begin{equation}\label{n7:eq:hessian}
 L(s,t)=\frac12\int_0^1D_t^2\U_s(\lambda t)\dd\lambda,
 \qquad L(s,t)\preceq L(s,0).
\end{equation}
For each fixed $Z$, $\mathcal Q_s(t;Z)$ is concave as a function of the
original gaps.
\end{proposition}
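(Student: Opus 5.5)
The plan is to reduce everything to Lemma~\ref{lem:avg}. First I would check that the restricted potential \eqref{n7:eq:restricted} has exactly the block structure that lemma requires, and then read off $L$ from its block contributions. For the structure, I would verify the decomposition of the twenty-one pair interactions directly from the angle list \eqref{n7:eq:angles}.

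The interaction of $P_0$ with the pair $\{s_i+t_i,\,2\pi-s_i+t_i\}$ is
\[
\widehat W(s_i+t_i)+\widehat W(2\pi-s_i+t_i)=\widehat W(s_i+t_i)+\widehat W(s_i-t_i)=J_{s_i}(t_i),
\]
using evenness and $2\pi$-periodicity of $\widehat W$. The within-pair separation is $2s_i$, which gives the constant $\widehat W(2s_i)$. For $i<j$, the four cross interactions have differences $\pm(s_j-s_i)+(t_j-t_i)$ and $\pm(s_i+s_j)+(t_j-t_i)$ modulo $2\pi$. These produce $J_{s_j-s_i}(t_j-t_i)+J_{s_i+s_j}(t_j-t_i)$, exactly as in \eqref{eq:Kblock}.

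The domain condition is where care is needed. Each endpoint $s_i\pm t_i$, $s_j-s_i\pm(t_j-t_i)$, $s_i+s_j\pm(t_j-t_i)$ must be written as an actual positive arc sum (or its complement) of the gaps in \eqref{n7:eq:param}. For example, $s_i+t_i$ is the sum of the first $i$ gaps, and $s_i-t_i$ is the sum of the last $i$ gaps. Likewise $s_i+s_j-(t_j-t_i)$ is the arc from $2\pi-s_j+t_j$ around through $0$ to $s_i+t_i$. These sums lie in $(0,2\pi)$ and vary affinely in $t$. So along the segment $\lambda t$, $\lambda\in[0,1]$, the intervals $[z-|\lambda h|,z+|\lambda h|]$ stay inside $(0,2\pi)$. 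The reason is that the gaps $u_k\pm\lambda p_k$ remain positive, since $u_k>|p_k|$, and $c$ does not depend on $t$. This gives a convex collision-free defect domain containing zero, which is the hypothesis of Lemma~\ref{lem:avg}.

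With the hypothesis in place, Lemma~\ref{lem:avg} applies with defect vectors $b=e_i$ for the $P_0$ blocks and $b=e_j-e_i$ for the cross blocks; there is no unpaired term. The blocks contribute $a_ie_ie_i^T$ and $b_{ij}(e_j-e_i)(e_j-e_i)^T$, and their sum is \eqref{n7:eq:L}. The lemma then gives $\nabla_t\U_s=2Lt$, the integral formula \eqref{n7:eq:hessian}, and the Loewner bound $L(s,t)\preceq L(s,0)$ from $\Phi(z,h)\le g(z)$ with positive semidefinite multipliers. Expanding $Z^TLZ$ yields the quadratic form \eqref{n7:eq:quadratic}.

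Two conclusions remain. For $Lt=0$: $\U_s(t)$ is $\mathcal U$ restricted to the affine family of configurations in \eqref{n7:eq:angles}, and $\partial_{\theta_i}\mathcal U=-F_i$. So at a solution of \eqref{eq:cc}, the chain rule gives $\nabla_t\U_s=0$, hence $Lt=0$. For concavity: every argument $s_i$, $s_j\pm s_i$, $t_i$, $t_j-t_i$ is a linear combination of the original gaps. The joint concavity of $\Phi$ in Lemma~\ref{lem:phi}, composed with an affine map and summed with the nonnegative weights $Z_i^2$ and $(Z_j-Z_i)^2$, gives concavity of $\mathcal Q_s(t;Z)$ in the gaps.

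The main obstacle is the arc-sum bookkeeping for the cross blocks: checking that every $J$ interval is a genuine subinterval of $(0,2\pi)$ and not merely collision-free modulo $2\pi$, as the paper warns. I would handle it by listing the eighteen endpoints explicitly as cyclic gap sums.
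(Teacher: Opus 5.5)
Your proposal is correct and follows the same route as the paper. The paper's proof likewise differentiates the nine-block decomposition \eqref{n7:eq:restricted}, identifies each integrated Hessian coefficient as $\Phi(z,h)$, and justifies the collision-free defect segment through the positive gaps $u_k\pm\lambda p_k$; it then obtains the Loewner bound and concavity from $\Phi\le g$ and from joint concavity of $\Phi$ with affine arguments. The only difference is presentation: you invoke Lemma~\ref{lem:avg} and write out the endpoint arc-sum bookkeeping, whereas the paper repeats the block computation inline and records the endpoint check in the text just before the proposition.
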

\begin{proof}
Differentiate \eqref{n7:eq:restricted}, apply \eqref{n7:eq:J}, and divide by two.
Every restricted derivative vanishes at a central configuration.
For a paired block the integral of its Hessian coefficient is
\[
 \frac12\int_0^1[g(z+\lambda h)+g(z-\lambda h)]\dd\lambda=\Phi(z,h).
\]
The line segment of defects is collision-free because all its gaps are
convex combinations of the original gaps and their positive symmetric
averages. This proves the first identity in \eqref{n7:eq:hessian} term by
term. The second follows from Lemma~\ref{lem:phi}, since all rank-one
multipliers are positive semidefinite. The same rank-one decomposition,
with affine arguments of $\Phi$, proves concavity in the gaps.
\end{proof}

For comparison with the previous gap-defect notation, write
\[
 T=\begin{pmatrix}1&0&0\\1&1&0\\1&1&1\end{pmatrix},\qquad t=Tp.
\]
Then $M_7=T^TLT$ satisfies $M_7p=0$. Its quadratic form is
\begin{equation}\label{n7:eq:pform}
 \begin{split}
 Q_7(X)={}&a_1X_1^2+a_2(X_1+X_2)^2+a_3(X_1+X_2+X_3)^2\\
 &{}+b_{12}X_2^2+b_{13}(X_2+X_3)^2+b_{23}X_3^2.
 \end{split}
\end{equation}
The numerical eigenvalues of $L$ and $M_7$ need not coincide; their inertia
coincides because $T$ is invertible.

\subsection{The fixed negative plane}
\begin{theorem}[Universal negative plane]\label{n7:thm:plane}
For every collision-free seven-gap vector and every candidate pairing,
\begin{equation}\label{n7:eq:plane}
 Z^TL(s,t)Z\leq-\kappa(Z_1^2+Z_2^2)
 \quad\text{whenever }Z_3=0,
 \qquad \kappa=\frac{927}{4256}.
\end{equation}
Thus $L$ has at least two strictly negative eigenvalues. In gap-defect
coordinates the corresponding estimate is
\[
 Q_7(X)\leq-\frac{309}{4256}\|X\|^2
 \qquad\text{when }X_1+X_2+X_3=0.
\]
\end{theorem}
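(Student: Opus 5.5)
The plan is to use the Loewner comparison $L(s,t)\preceq L(s,0)$ from Proposition~\ref{n7:prop:matrix} (equivalently \eqref{eq:average}). This reduces the claim to the symmetric point $t=0$, where every coefficient is a value of $g$. Setting $Z_3=0$ in \eqref{n7:eq:quadratic} leaves
\[
 \mathcal Q_s(0;Z)=\alpha Z_1^2+\beta Z_2^2+K(Z_1-Z_2)^2,
\]
with
\[
 \alpha=g(s_1)+g(s_3-s_1)+g(s_1+s_3),\quad
 \beta=g(s_2)+g(s_3-s_2)+g(s_2+s_3),\quad
 K=g(s_2-s_1)+g(s_1+s_2).
\]

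First I bound the diagonal terms. By symmetry of $g$ about $\pi$, $g(s_1+s_3)=g(2\pi-s_1-s_3)$. The arguments $s_3-s_1$ and $2\pi-s_1-s_3$ have midpoint $\pi-s_1$, so midpoint concavity of $g$ gives $\alpha\le g(s_1)+2g(\pi-s_1)\le-\kappa$ by \eqref{eq:two}. The same argument gives $\beta\le-\kappa$. Hence if $K\le0$, the estimate \eqref{n7:eq:plane} follows at once.

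Next, suppose $K>0$. I would mimic the proof of Proposition~\ref{n6:prop:vertex-negative-plane}. Put $X_0=-\alpha-\kappa\ge0$ and $Y_0=-\beta-\kappa\ge0$. It suffices to show that the form $X_0Z_1^2+Y_0Z_2^2-K(Z_1-Z_2)^2$ is positive semidefinite, that is,
\[
 K(X_0+Y_0)\le X_0Y_0,
\]
and for this it is enough that $K\le\tfrac12\min\{X_0,Y_0\}$.

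To get $2K\le X_0$, I would pair each term of $K$ with a term of $\alpha$ via midpoint concavity. The aim is a one-variable bound of the shape $\alpha+2K\le g(t)+6g(\pi-t)+(\text{margin})$, which invokes the stronger inequality \eqref{eq:six}. That inequality has a margin $\delta_6=\tfrac73\kappa$ to spare, which should absorb the $\kappa$ terms; the interpolated weights used to derive \eqref{eq:two} from \eqref{eq:six} suggest exactly this bookkeeping. Positivity of $K$ forces at least one of $g(s_2-s_1)$, $g(s_1+s_2)$ to be positive, so the corresponding argument lies near $\pi$. This is what should allow the six-weighted inequality to be applied. The bound $2K\le Y_0$ should follow symmetrically.

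This pairing step is the main obstacle. The six arguments do not have an obvious common midpoint, so I expect a short case split according to which of $s_2-s_1$ and $s_1+s_2$ carries the positive part of $K$. If the crude bound $K\le\tfrac12\min$ turns out to be too lossy, I would fall back on the exact determinant condition.

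Finally, the gap-defect version. With $t=TX$, the condition $X_1+X_2+X_3=0$ is exactly $Z_3=0$ for $Z=TX$, where $Z_1=X_1$ and $Z_2=X_1+X_2=-X_3$. Then
\[
 \|X\|^2=Z_1^2+(Z_2-Z_1)^2+Z_2^2\le3(Z_1^2+Z_2^2),
\]
so $Q_7(X)=Z^TLZ\le-\tfrac{\kappa}{3}\|X\|^2=-\tfrac{309}{4256}\|X\|^2$. The strict negativity on this plane, together with the variational characterization used in Lemma~\ref{lem:inertia}, yields at least two negative eigenvalues.
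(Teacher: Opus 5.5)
Your reduction to $t=0$, the diagonal bounds $\alpha\le g(s_1)+2g(\pi-s_1)$ and $\beta\le g(s_2)+2g(\pi-s_2)$, the case $K\le0$, and the passage to gap-defect coordinates are all correct and match the paper. The sufficiency of $2K\le\min\{X_0,Y_0\}$ is also correct. The gap is the case $K>0$. This is the only nontrivial part of the theorem, and you leave it as an unresolved obstacle with a guessed case split.

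The missing idea is that you need neither a pairing of $K$ with terms of $\alpha$ nor a case split. Midpoint concavity bounds $K$ by itself in two ways. The arguments $s_2\mp s_1$ have midpoint $s_2$, so $K\le 2g(s_2)$. Writing $g(s_1+s_2)=g(2\pi-s_1-s_2)$, the arguments $s_2-s_1$ and $2\pi-s_1-s_2$ have midpoint $\pi-s_1$, so $K\le 2g(\pi-s_1)$. In particular, $K>0$ forces $g(s_2)>0$. The second bound gives
\[
 \alpha+2K\le g(s_1)+6g(\pi-s_1)\le-\delta_6,
\]
which is exactly the one-variable shape you were aiming for. Hence $X_0-2K\ge\delta_6-\kappa>0$.

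The bound for $Y_0$ does not follow ``symmetrically''. The same estimates give $\beta+2K\le 5g(s_2)+2g(\pi-s_2)$, whose large weight sits on $g(s_2)$ rather than on $g(\pi-s_2)$. Applying \eqref{eq:six} at $s_2$ therefore does not help. Instead, apply \eqref{eq:six} at $\pi-s_2$ and use the sign information $g(s_2)>0$ coming from $K>0$:
\[
 \beta+2K\le 2\{g(\pi-s_2)+6g(s_2)\}-7g(s_2)\le-2\delta_6-7g(s_2)<-2\delta_6 .
\]
Thus $Y_0-2K>2\delta_6-\kappa>0$. Since $\delta_6=7\kappa/3>\kappa$, the crude bound $(Z_1-Z_2)^2\le 2Z_1^2+2Z_2^2$ suffices, and no determinant refinement is needed. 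With these two estimates supplied, your argument becomes essentially the paper's proof.
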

\begin{proof}
By \eqref{n7:eq:hessian}, it suffices to use $t=0$. Put
$a=s_1$, $b=s_2$, $d=s_3$, so $0<a<b<d<\pi$. On $Z_3=0$ the quadratic
form is
\[
 U_1Z_1^2+U_2Z_2^2+K(Z_2-Z_1)^2,
\]
where
\begin{align*}
 U_1&=g(a)+g(d-a)+g(d+a),\\
 U_2&=g(b)+g(d-b)+g(d+b),\\
 K&=g(b-a)+g(b+a).
\end{align*}
By reflection about $\pi$ and midpoint concavity,
\[
 g(d-a)+g(d+a)=g(d-a)+g(2\pi-d-a)\leq2g(\pi-a),
\]
and similarly with $b$. Hence
\[
 U_1\leq g(a)+2g(\pi-a),\qquad U_2\leq g(b)+2g(\pi-b).
\]
If $K\leq0$, \eqref{eq:two} proves \eqref{n7:eq:plane} immediately.

Suppose $K>0$. The two midpoint estimates
\[
 K\leq2g(b),\qquad
 K=g(b-a)+g(2\pi-b-a)\leq2g(\pi-a)
\]
imply $g(b)>0$ and $g(\pi-a)>0$. Let
\[
 X=-g(a)-2g(\pi-a),\qquad Y=-g(b)-2g(\pi-b).
\]
Applying \eqref{eq:six} at $a$ and $\pi-b$ yields
\[
 X\geq4g(\pi-a)+\delta_6,\qquad
 Y\geq11g(b)+2\delta_6.
\]
In particular $X-2K\geq\delta_6$ and $Y-2K\geq2\delta_6$. Therefore
\begin{align*}
 Z^TL(s,0)Z
 &\leq-XZ_1^2-YZ_2^2+K(Z_2-Z_1)^2\\
 &\leq-(X-2K)Z_1^2-(Y-2K)Z_2^2\\
 &\leq-\delta_6Z_1^2-2\delta_6Z_2^2\\
 &\leq-\kappa(Z_1^2+Z_2^2).
\end{align*}
The comparison \eqref{n7:eq:hessian} transfers this bound to all defects.
Finally, if $X_1+X_2+X_3=0$ and $Z=TX$, then
\[
 \|X\|^2=Z_1^2+(Z_2-Z_1)^2+Z_2^2\leq3(Z_1^2+Z_2^2),
\]
which gives the displayed gap-coordinate bound.
\end{proof}

\begin{corollary}[Balanced triple-sum rigidity]\label{n7:cor:triple}
Suppose a central configuration has, for some cyclic index $i$,
\[
 x_i+x_{i+1}+x_{i+2}=x_{i+4}+x_{i+5}+x_{i+6}.
\]
Then it is reflection-symmetric about the diameter through $P_i$.
\end{corollary}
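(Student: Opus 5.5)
By cyclic relabelling I take $i=0$, so the fixed satellite is $P_0$ and the gaps are written as in \eqref{n7:eq:param}. In that chart the hypothesis reads
\[
 (u_1+p_1)+(u_2+p_2)+(u_3+p_3)=(u_3-p_3)+(u_2-p_2)+(u_1-p_1).
\]
This is equivalent to $p_1+p_2+p_3=0$, that is, $t_3=0$ in the cumulative defects \eqref{n7:eq:st}. Geometrically, $P_3$ and $P_4$ are then placed symmetrically about the diameter through $P_0$. I do not yet know whether the other two pairs are symmetric. The plan imitates Corollary~\ref{n6:cor:antipodal-rigidity}: the exact reflection equation will be combined with the universal negative plane of Theorem~\ref{n7:thm:plane}.

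Step one. By Proposition~\ref{n7:prop:matrix}, every central configuration satisfies $L(s,t)t=0$ for this candidate pairing, where $t=(t_1,t_2,t_3)^T$. Taking the inner product with $t$ gives
\[
 t^TL(s,t)t=0.
\]
Step two. Since $t_3=0$, the vector $Z=t$ lies in the plane $\{Z_3=0\}$. Theorem~\ref{n7:thm:plane} then gives
\[
 0=t^TL(s,t)t\leq-\kappa(t_1^2+t_2^2).
\]
It follows that $t_1=t_2=0$, and hence $t=0$. Equivalently $p=T^{-1}t=0$, which is precisely the reflection condition about the diameter through $P_0$.

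Step three, the only point needing care, is that Theorem~\ref{n7:thm:plane} is applied with the same $t$ at which $L(s,t)$ is evaluated. The theorem is stated for every collision-free gap vector and every defect, with the bound uniform in $Z$. Its proof reduces to $L(s,0)$ through the comparison \eqref{n7:eq:hessian}, and that comparison holds for every admissible $t$. The segment from $0$ to $t$ is collision-free by the argument in Proposition~\ref{n7:prop:matrix}, so the choice $Z=t$ is legitimate. I expect no real obstacle here. The only thing to check is the bookkeeping that the balanced triple sum corresponds exactly to $t_3=0$, and not to some other linear combination of the defects. That follows by summing the first three gaps in \eqref{n7:eq:param} and comparing with the sum of the last three.
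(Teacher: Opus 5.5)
Your proposal is correct and follows the paper's own proof: the balanced triple sum is exactly $t_3=0$ for the pairing based at $P_i$, and combining $t^TL(s,t)t=0$ from the reflection equation with the negative-plane bound \eqref{n7:eq:plane} forces $t_1=t_2=0$, hence $t=0$. Your additional checks are sound and spell out what the paper's three-line proof leaves implicit, namely that every configuration admits the chart \eqref{n7:eq:param} and that the bound is uniform in $Z$ at the given defect.
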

\begin{proof}
The equality is $t_3=0$ for the pairing based at $P_i$. Centrality gives
$Lt=0$, hence $t^TLt=0$, and \eqref{n7:eq:plane} forces $t=0$.
\end{proof}
This condition concerns the cyclically opposite gap: its midpoint is
antipodal to $P_i$. It is not a statement about an arbitrary gap midpoint
or an arbitrary antipodal pair of satellites.

\subsection{Large gaps and the initial verification domain}
The explicit gap bound gives $R_i>1/512$. The next lemma treats the
large-gap region analytically. The finite verification then uses a closed
domain containing a representative of every remaining solution.

\begin{lemma}[Large-gap reduction]\label{n7:prop:large}
If a seven-satellite angular solution has a gap $c\geq4\pi/5$, it is
reflection-symmetric about the diameter through the opposite satellite.
For that pairing, $L(s,t)\preceq-I/128$.
\end{lemma}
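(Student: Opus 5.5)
Choose the pairing based at the satellite $P_0$ opposite the large gap, so that in \eqref{n7:eq:param} the distinguished gap is $c\geq4\pi/5$. Then $s_3=\pi-c/2\leq3\pi/5$, and $0<s_1<s_2<s_3\leq3\pi/5$. By \eqref{n7:eq:hessian} it suffices to prove $L(s,0)\preceq-I/128$. Then $Lt=0$ forces $t=0$, which is exactly reflection about the diameter through $P_0$.

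At $t=0$ the diagonal coefficients are $a_i=g(s_i)$, and the off-diagonal rank-one coefficients are $b_{ij}=g(s_j-s_i)+g(s_i+s_j)$. Every argument $s_i$, $s_j-s_i$, and $s_i+s_j$ is at most $6\pi/5$. Each is also a proper arc sum avoiding the gap $c$. The arguments $s_i+s_j$ may exceed $\pi$; I reflect them to $2\pi-s_i-s_j\geq4\pi/5$.

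The key observation is that $g$ is increasing on $(0,\pi)$ with $g(\pi/3)=-9/4$ and $g(\pi/2)=-3\sqrt2/8$. The positive part of $g$ therefore comes only from arguments near $\pi$, i.e.\ from $s_i+s_j$ in a neighbourhood of $\pi$.

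My first step is to bound every coefficient from above by $g$ at the point nearest $\pi$ allowed by the constraints. I then group the terms using the concavity and midpoint estimates \eqref{eq:six}--\eqref{eq:three}, pairing $g(s_i+s_j)$ against the strongly negative terms $g(s_i)$ or $g(s_j-s_i)$.

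For example, $b_{12}+a_1$ contains $g(s_1)+g(s_2-s_1)+g(s_1+s_2)$. Since $(s_2-s_1)+(s_1+s_2)=2s_2$, the last two satisfy $g(s_2-s_1)+g(s_1+s_2)\leq2g(s_2)$. Because $s_2<3\pi/5$, this term is bounded by $2g(3\pi/5)$, a modest quantity that I would compute explicitly.

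More systematically, I would test the quadratic form $\sum a_iZ_i^2+\sum b_{ij}(Z_j-Z_i)^2$ and bound $(Z_j-Z_i)^2\leq 2Z_i^2+2Z_j^2$ only for the terms with $b_{ij}>0$. This leaves a diagonal upper bound with entries of the form $g(s_i)+2\sum_{j}b_{ij}^+$. Each such entry should be $\leq-1/128$ after using that $b_{ij}\leq 2g((s_i+s_j)/2\text{ or its reflection})$ and that all relevant midpoints are at most about $3\pi/5$.

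I expect this diagonalization to be the main obstacle. The factor $2$ from $(Z_j-Z_i)^2$ may be too lossy when $s_1$ is small, because then $g(s_1)$ is very negative but $b_{12}$ and $b_{13}$ can both be positive.

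If the crude bound fails, I would instead treat the $3\times3$ upper-bound matrix directly. I would use the Gershgorin-free route of leading principal minors, as in criterion N. Concretely, I would replace each coefficient by its supremum over a few interval cases for $(s_1,s_2,s_3)$. This is a compact region in $\{0<s_1<s_2<s_3\leq3\pi/5\}$, cut away from collisions by $R_i>1/512$. On each case I would verify $U+I/128\prec0$ by exact minor signs.

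The collision lower bound is needed only to keep the domain compact. Near $s_1\to0$, $g(s_1)\to-\infty$, which only helps, so that corner is harmless.
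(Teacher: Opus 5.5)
Your setup matches the paper: the same pairing, $s_3=\pi-c/2\le 3\pi/5$, and the reduction to $t=0$ via \eqref{n7:eq:hessian}. You even write down the decisive inequality $g(s_2-s_1)+g(s_1+s_2)\le 2g(s_2)\le 2g(3\pi/5)$. But you never close the argument. You leave $2g(3\pi/5)$ as ``a modest quantity'' of undetermined sign. You then worry that $b_{12}$ and $b_{13}$ can be positive, and finally defer to a subdivision and minor-sign computation that is only sketched. As written, this is not a proof.

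The missing point is the sign of that quantity. With $\sin(3\pi/10)=(1+\sqrt5)/4$, formula \eqref{eq:g} gives exactly $g(3\pi/5)=(29-13\sqrt5)/8$. The inequality $g(3\pi/5)<-1/128$ is equivalent to $208\sqrt5>465$, i.e.\ $216320>216225$. The margin is only about $8\times10^{-4}$, so this must be an exact comparison; it is where the constant $1/128$ comes from.

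Once you have this, the midpoint inequality you used for the pair $(1,2)$ holds for every pair $i<j$. The two arguments $s_j\mp s_i$ lie in $(0,2\pi)$, where $g$ is concave, and their midpoint is $s_j$. Since $g$ is increasing on $(0,\pi)$ and $s_j\le s_3\le 3\pi/5$, you get $b_{ij}\le 2g(s_j)\le 2g(3\pi/5)<0$.

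So there are no positive cross coefficients at all, and your concern about positive $b_{12},b_{13}$ for small $s_1$ is unfounded. The cross terms can simply be discarded: $Z^TL(s,0)Z\le\sum_i g(s_i)Z_i^2\le g(3\pi/5)\|Z\|^2\le -\|Z\|^2/128$. This is the paper's three-line proof. No diagonalization with the factor $2$, no case split, no collision bound $R_i>1/512$, and no computer check is needed.
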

\begin{proof}
Put the gap in position $c$ of \eqref{n7:eq:param}. Then
$s_3=\pi-c/2\leq3\pi/5$. The exact value
\begin{equation}\label{eq:g35}
 g(3\pi/5)=\frac{29-13\sqrt5}{8}<-\frac1{128}
\end{equation}
follows from \eqref{eq:g}; the final comparison is equivalent to
$208\sqrt5>465$, whose squared comparison is $216320>216225$.
For every $i$,
$a_i\leq g(s_i)\leq g(3\pi/5)<-1/128$. For $i<j$,
\[
 b_{ij}\leq g(s_j-s_i)+g(s_j+s_i)\leq2g(s_j)<0.
\]
The quadratic form \eqref{n7:eq:quadratic} gives the matrix bound.
Its kernel therefore contains only the zero defect.
\end{proof}

Every remaining solution has a representative in the following domain,
which also includes some configurations already treated by the preceding
lemma:
\begin{align}
 \mathcal P_7&=\{R\in[1/512,1]^7:\ \sum_{i=0}^6R_i=2\},
 \label{n7:eq:globaldomain}\\
 \mathcal K_7&=\{R\in\mathcal P_7:\ R_3\geq R_i\ (0\leq i\leq6),
 \ R_0\leq R_6\}.\label{n7:eq:canonical}
\end{align}
Every configuration described by $\mathcal P_7$ has a description in
$\mathcal K_7$: cyclically relabel any maximal
gap into position three and then, if necessary, reverse the order by
$R_i\mapsto R_{6-i}$. This reversal fixes position three and exchanges
positions zero and six. All inequalities are weak. Equal maximal gaps and equal values in the
orientation condition are therefore retained.

Besides the common sum tightenings, the tree uses the implications
$R_i\leq u_3$, $R_3\geq\ell_i$, $R_0\leq u_6$, and
$R_6\geq\ell_0$. Each inequality holds for every point satisfying the ordering constraints
in the current region. Lemma~\ref{lem:covergeometry} therefore justifies
every bound tightening and every closed rational split.

\subsection{Finite verification on the full remaining domain}
For criterion P, the recorded direction is expressed in cumulative-defect
coordinates and evaluated using \eqref{n7:eq:quadratic}. A direction in
gap-defect coordinates would first require the corresponding change of
coordinates. For criterion N, the matrix upper bound is assembled using
$L(s,t)\preceq L(s,0)$. Criteria F and C use the original force sums
\eqref{eq:fullforce} and \eqref{eq:H}; the restricted reflection equations
alone are not sufficient for these force tests.

\begin{proposition}[Verified covering for seven satellites]\label{n7:prop:globalcover}
The representative domain $\mathcal K_7$ is covered by the fixed
subdivision record
\texttt{n7\_cover\_v2.tree}, with the following numbers of final subregions for each criterion:
\begin{center}
\begin{tabular}{lrrrrrr}
\toprule
Criterion & P & N & F & C & X & Total\\
\midrule
Count & 20,834 & 12,519 & 41,416 & 50,660 & 25 & 125,454\\
\bottomrule
\end{tabular}
\end{center}
There are 125,453 binary splits and 250,907 nodes in total. The maximum
depth is 73, and every final subregion is verified or proved empty.
\end{proposition}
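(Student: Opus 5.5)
The plan is to treat Proposition~\ref{n7:prop:globalcover} as a finite verification statement. Its proof has two parts: checking that the record \texttt{n7\_cover\_v2.tree} is a valid closed cover of $\mathcal K_7$, and checking the criterion attached to each leaf using exact or outward-rounded interval arithmetic. These are exactly the hypotheses of Proposition~\ref{prop:coverlogic}.

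\emph{Combinatorial validity of the tree.} The checker first rebuilds the root: the box $[1/512,1]^7$ with $\sum R_i=2$ and the ordering inequalities $R_3\geq R_i$ and $R_0\leq R_6$. It then walks the record and verifies:
\begin{itemize}
\item every internal node splits one coordinate at a rational $c$ into the closed halves $R_j\leq c$ and $R_j\geq c$;
\item every tightening is either a sum tightening or one of the ordering implications listed before the proposition;
\item every node is reached exactly once from the root, so the 250,907 nodes and 125,454 leaves are accounted for with no orphan or reused record.
\end{itemize}
Lemma~\ref{lem:covergeometry} then guarantees that the union of the leaves contains every admissible point. For each leaf marked X, the checker confirms a strict inconsistency, such as $\ell_i>u_i$ or $\sum\ell_i>2$.

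\emph{Leaf certificates.} The checker enumerates the vertices of each fixed-sum section $P_B$ as in Lemma~\ref{lem:covergeometry}, and then handles each criterion as follows.
\begin{itemize}
\item \textbf{P.} For the recorded pairing and direction $Z$ in cumulative-defect coordinates, it computes a rational lower bound for $\mathcal Q_s(t;Z)$ at every vertex, using the chord lower model \eqref{eq:chordlower} or trapezoid bounds. Concavity (Proposition~\ref{n7:prop:matrix}) and the negative plane of Theorem~\ref{n7:thm:plane} then give invertibility via Lemma~\ref{lem:PN}.
\item \textbf{N.} It bounds each $a_i$ and $b_{ij}$ above by the maximum of $g$ over the interval of its affine argument, using $L(s,t)\preceq L(s,0)$. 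It assembles the resulting $3\times3$ rational matrix and verifies negative definiteness by exact leading principal minors.
\item \textbf{F.} It encloses a single $F_i$ from \eqref{eq:fullforce} using exact arc bounds and checks that the enclosure has a strict sign.
\item \textbf{C.} It uses the centered enclosure of Lemma~\ref{lem:centered} with the recorded weights $w$ and verifies that $0$ lies outside \eqref{eq:centered} or \eqref{eq:supportcentered}. The Jacobian intervals \eqref{eq:forcejac} are computed with the factor $\pi$ included.
\end{itemize}
All values of $\pi$, $\sin$, $f$ and $g$ are enclosed rigorously: Machin's identity for $\pi$, alternating Taylor polynomials with remainders for the trigonometric functions, \eqref{eq:G} for $g$, and outward rounding throughout. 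Positivity of every denominator is certified before any division. Every divided-difference endpoint must lie in $(0,2\pi)$; this holds because the endpoints are proper arc sums bounded below by $1/512$.

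\emph{Main obstacle.} The hard part is the leaves near the boundary of the domain, where some gaps approach $1/512$ or the maximal gap nears $R_3=1$. There the coefficients $g$ are of order $\pi^{-3}R^{-3}$, so naive intervals give very wide force enclosures and loose matrix bounds. The safeguards are:
\begin{itemize}
\item Lemma~\ref{lem:centered} keeps the cancellation in force combinations;
\item separate programs verify P and N against the exact final boxes, including all inherited tightenings, rather than against proposal boxes.
\end{itemize}
The counts in the table are then simply reported by the checker after every leaf passes.
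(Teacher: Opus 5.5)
Your proposal is correct and follows essentially the same route as the paper. The paper's proof is likewise a finite exact verification: the checker rebuilds every region of \texttt{n7\_cover\_v2.tree} from the root domain together with its closed splits and its sum and ordering tightenings, and rejects reused or unreachable records. It then checks each final subregion by criterion P (vertex lower bounds in cumulative-defect coordinates, combined with Theorem~\ref{n7:thm:plane}), N, F, C, or emptiness, with the continuous implications supplied by Section~\ref{sec:certificates} and Proposition~\ref{prop:coverlogic}. The paper also reports the vertex and divided-difference counts and the minimum normalized P margin. It further runs a second implementation that reconstructs all twenty-one pair interactions; these details corroborate the verification but do not change the argument.
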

\begin{proof}[Finite exact verification]
Starting from the domain \eqref{n7:eq:globaldomain}, the verification
program reconstructs every box from the preceding splits and bound
tightenings, including the ordering constraints. It rejects reused or
unreachable records. For criterion P, it checks every vertex of the
fixed-sum section using exact lower bounds for divided differences.
For N, it verifies a rational negative-definite matrix upper bound.
For F and C, it recomputes the original force expressions.
The soundness implications are those of Section~\ref{sec:certificates}.
Every final subregion satisfies its stated criterion. The complete
verification checks 1,785,028 vertex occurrences and 16,064,746
divided-difference terms with nonzero weight. The minimum normalized P margin is
\[
 \frac{10835645269295348616325156017}
 {67803025724813591658698148205821952}>0.
\]
This is a lower bound for the recorded quadratic forms divided by their
direction norms; it does not bound every eigenvalue of every candidate.
The record is checked both with the grouped reflection formulas and
with a separate implementation that reconstructs all twenty-one original
pair interactions.
\end{proof}

\begin{proposition}[Seven-satellite symmetry]\label{n7:prop:symmetry}
Every collision-free seven-satellite angular solution has a reflection axis.
\end{proposition}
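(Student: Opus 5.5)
The plan is to assemble the seven-satellite pieces exactly as in the proofs of Propositions~\ref{n5:prop:symmetry} and~\ref{n6:prop:symmetry}, with Proposition~\ref{prop:coverlogic} as the logical frame. Let $\theta$ be a collision-free solution of \eqref{eq:cc} with $N=7$, and let $x_0,\dots,x_6$ be its cyclic gaps, $R_i=x_i/\pi$. First I would dispose of the large-gap case. If some gap satisfies $x_i\geq4\pi/5$, then Lemma~\ref{n7:prop:large} applies directly. The pairing based at the opposite satellite has $L(s,t)\preceq-I/128$, so $L(s,t)t=0$ from Proposition~\ref{n7:prop:matrix} forces $t=0$, and that pairing is a geometric reflection.

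In the remaining case every gap is smaller than $4\pi/5<\pi$, so $R_i<1$. By Proposition~\ref{prop:collars} we have $x_i>1/160$. Combined with $\pi<16/5$, this gives $R_i>1/512$, so $R\in\mathcal P_7$. Next, cyclically relabel so that a maximal gap sits in position three, and reverse by $R_i\mapsto R_{6-i}$ if necessary so that $R_0\leq R_6$. This produces a representative in $\mathcal K_7$. The relabeling permutes the candidate pairings and the reversal maps reflections to reflections, so a reflection axis for the representative is one for the original configuration. The weak inequalities in \eqref{n7:eq:canonical} ensure no configuration is lost through ties.

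Then I would invoke Proposition~\ref{n7:prop:globalcover}: the tree \texttt{n7\_cover\_v2.tree} covers $\mathcal K_7$ by closed splits and admissible tightenings, and these are valid by Lemma~\ref{lem:covergeometry} together with the ordering implications listed before that proposition. Following the argument of Proposition~\ref{prop:coverlogic}, the representative lies in some final subregion. That subregion cannot be an X region. It cannot be an F or C region either, because those contradict $F_i=0$ or $H_w=0$ for the original forces \eqref{eq:fullforce}, \eqref{eq:H}. It must therefore be a P or N region for some pairing.

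For criterion N, Lemma~\ref{lem:PN} gives invertibility of $L(s,t)$ at once. For criterion P, the fixed negative plane of Theorem~\ref{n7:thm:plane} ($Z_3=0$) supplies the codimension-one negative subspace required by Lemma~\ref{lem:inertia}. The recorded direction gives $Z^TLZ>0$ at every vertex, and this positivity extends to the whole section by concavity (Lemma~\ref{lem:avg}, Proposition~\ref{n7:prop:matrix}). Hence $L$ is invertible. In either case $Lt=0$ forces $t=0$, and the chosen pairing is a reflection axis.

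The only delicate point is bookkeeping rather than analysis. The P directions must be interpreted in cumulative-defect coordinates, matching the negative plane $Z_3=0$; otherwise one has to transport them by the congruence $T$, which preserves inertia. One must also check that the verified region $\mathcal K_7$ genuinely contains every representative, which needs the strict gap bound $R_i>1/512$ and $R_i\leq1$. Both are in hand, so the proof reduces to these citations.
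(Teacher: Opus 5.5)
Your proposal is correct and follows the paper's proof essentially step for step. You treat the large-gap case with Lemma~\ref{n7:prop:large}, use the gap bound and relabeling/reversal to place a representative in $\mathcal K_7$, and then invoke the verified cover of Proposition~\ref{n7:prop:globalcover}: X, F, C are excluded, while P and N give an invertible $L$ through Theorem~\ref{n7:thm:plane} and Lemma~\ref{lem:PN}, which is exactly Proposition~\ref{prop:coverlogic} for $N=7$.
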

\begin{proof}
Use Lemma~\ref{n7:prop:large} for the large-gap case. The explicit gap
bound and the choice of representative place every remaining solution in
$\mathcal K_7$. Proposition~\ref{n7:prop:globalcover} places it in a
verified final subregion. A region of type X is empty; F and C contradict
the force equations. P and N give an invertible matrix by
Theorem~\ref{n7:thm:plane} and Lemma~\ref{lem:PN}. The corresponding
matrix equation forces the reflection defect to vanish. This is also a direct application
of Proposition~\ref{prop:coverlogic}.
\end{proof}

\subsection{A fixed dictionary of force exclusions for seven satellites}
\label{n7:sparse-dictionary}

Some force exclusions in the existing cover can be expressed using two
fixed combinations and the combinations obtained from them by cyclic
relabeling and reversal. This reduces the number of distinct formulas that
need to be presented. Their geometric meaning is seen in the reflection
coordinates, while their verification uses the original pairwise forces.

We keep the convention
$W(a)=\cos a-(2\sin(a/2))^{-1}$ and $f=W'$. With $\theta_0=0$, let
\[
 \mathcal U=\sum_{0\le i<j\le6}W(\theta_j-\theta_i),\qquad
 F_i=\sum_{j>i}f(\theta_j-\theta_i)-\sum_{j<i}f(\theta_i-\theta_j).
\]
Thus $\partial_{\theta_i}\mathcal U=-F_i$ and $\sum_iF_i=0$.
For $i=1,2,3$, write
$\theta_i=s_i+t_i$ and $\theta_{7-i}=2\pi-s_i+t_i$.
The derivatives of the \emph{full} potential are
\begin{equation}\label{n7:DM}
 D_i:=\partial_{t_i}\mathcal U=-(F_i+F_{7-i}),\qquad
 M_i:=\partial_{s_i}\mathcal U=-F_i+F_{7-i}.
\end{equation}
In particular, terms independent of $t$ must not be discarded when calculating
$M$. The complete angular equations are equivalent to $D=M=0$.

The two combinations used below are
\begin{align}
 \Xi_A&=D_1-M_3=-F_1-F_6+F_3-F_4,\label{n7:motifA}\\
 \Xi_B&=8M_1-D_1+16D_2=-7F_1-16F_2-16F_5+9F_6.\label{n7:motifB}
\end{align}
At a point where the derivatives with respect to all reflection defects
vanish, a strictly positive value of $\Xi_A$ forces
$M_3<0$, and one of $\Xi_B$ forces $M_1>0$. More generally, a strict sign of
$a\cdot M+b\cdot D$ excludes a complete stationary point. This follows directly from the displayed force identities; no
parameterization of the set where the defect derivatives vanish, and no
reduction using squared defects, is required.

For a weight $w\in\mathbb Z^7$, put
\[
 H_w=\sum_iw_iF_i
     =\sum_{i<j}(w_i-w_j)f(\theta_j-\theta_i).
\]
A constant added to $w$ has no effect. Starting from
\[
 w^A=(0,-1,0,1,-1,0,-1),\qquad
 w^B=(0,-7,-16,0,0,-16,9),
\]
we take all cyclic and reflected label images, subtract the zeroth component,
divide by the greatest common divisor, and make the first nonzero component
positive. This gives two disjoint sets of fourteen directions. Each verification record also stores a sign
$\varepsilon\in\{-1,1\}$. After relabeling or reversal, the geometric
interpretation uses the correspondingly transformed reflection coordinates;
it does not give a sign for one fixed $M_i$ in the original coordinates.

The next proposition identifies the part of the archived cover for which
these simpler formulas suffice. Here $B_\ell$ denotes the closed box
reconstructed from final-region record
$\ell$ and $\mathcal B_\ell=B_\ell\cap\{\sum_iR_i=2\}$, with $R_i=x_i/\pi$.

\begin{proposition}[Two-formula subcover]\label{n7:sparse-cover}
The accompanying fixed list identifies $5\,038$ distinct original
subregions verified by criterion C and
weights from the twenty-eight directions above. On each listed sum slice,
\[
 \varepsilon_\ell H_{w_\ell}(R)>\frac{1}{500\,000\,000}.
\]
There are $2\,601$ entries from the first family of transformed
directions and $2\,437$ from the second.
In the unnormalized base convention, the representative bounds are
\[
 \Xi_A>\frac1{256}\quad\hbox{on }\mathcal B_{222613},\qquad
 \Xi_B>\frac3{100}\quad\hbox{on }\mathcal B_{224692}.
\]
\end{proposition}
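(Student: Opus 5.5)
The plan is to treat Proposition~\ref{n7:sparse-cover} as a finite verification statement that rests on an exact-arithmetic recheck of a fixed list, and to organize that recheck so that the statement's logic is transparent.

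\textbf{Step 1: build the directions.} I would generate the twenty-eight directions explicitly from $w^A$ and $w^B$ under the dihedral action on labels. Cyclic relabeling $i\mapsto i+k$ and reversal $i\mapsto -i$ act on $w$ by permuting components. Because $\sum_iF_i=0$, and because after relabeling the force combination is the same function of the transformed gap vector, the normalization (subtract the zeroth component, divide by the gcd, fix the sign) changes $H_w$ only by a positive scalar and an additive multiple of $\sum_iF_i=0$. I would then check by direct listing that each orbit has exactly fourteen elements, meaning neither $w^A$ nor $w^B$ is fixed by a nontrivial dihedral element after normalization, and that the two orbits are disjoint. The identities \eqref{n7:motifA} and \eqref{n7:motifB} follow by substituting \eqref{n7:DM}. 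For example, $8M_1-D_1+16D_2=-8F_1+8F_6+F_1+F_6-16F_2-16F_5$, which gives the stated weights after the sign convention $w_i\leftrightarrow$ coefficient of $F_i$.

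\textbf{Step 2: reconstruct the listed boxes.} For each listed index $\ell$, the checker reconstructs $B_\ell$ from the root of \texttt{n7\_cover\_v2.tree} by replaying the recorded closed splits and tightenings, as in Proposition~\ref{n7:prop:globalcover}. It also confirms that the node is a terminal C record and that the $5\,038$ indices are distinct. The stored weight must be one of the twenty-eight normalized directions. A C record may store a weight differing from the normalized direction by a positive multiple or an additive constant. So I would compare the weights after the same normalization and track the resulting positive factor, which would also adjust the margin $1/500\,000\,000$.

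\textbf{Step 3: bound $\varepsilon_\ell H_{w_\ell}$ from below.} On each slice $\mathcal B_\ell$ I would apply Lemma~\ref{lem:centered}, using the rational feasible center $c=\ell+\alpha(u-\ell)$. The steps are:
\begin{enumerate}
\item enclose $H_w(c)$ with outward-rounded interval sine bounds;
\item enclose each Jacobian entry \eqref{eq:forcejac}, using monotonicity of $g$ on each arc-sum interval together with the exact arc bounds;
\item compute the exact affine extrema at the slice vertices given by Lemma~\ref{lem:covergeometry};
\item check that the left endpoint of \eqref{eq:supportcentered}, multiplied by $\varepsilon_\ell$, exceeds $1/500\,000\,000$.
\end{enumerate}
Counting the entries by orbit gives $2\,601$ and $2\,437$. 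The two representative bounds for $\Xi_A$ and $\Xi_B$ are the same computation on $\mathcal B_{222613}$ and $\mathcal B_{224692}$, stated without normalization. There the normalized $H_w$ equals $\pm\Xi$ up to the positive factor recorded in Step 2, and the margins $1/256$ and $3/100$ are checked directly.

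\textbf{Main obstacle.} I expect the difficulty to lie in the bookkeeping of Step 2, not in the analysis. The normalization and the transport of the sign $\varepsilon$ under relabeling must be consistent with the coordinates of the stored box. In particular, reversal changes which $F_i$ corresponds to which gap position, and it can flip the sign of a normalized direction. If some stored C weight does not reduce to one of the twenty-eight directions, I would first try applying the inverse of the representative-choosing relabeling to the box rather than to the weight. A second risk is that on a few boxes the centered bound is too weak to beat the uniform margin. In that case I would subdivide those slices with closed splits and apply \eqref{eq:supportcentered} on each piece, which preserves the inequality on the original slice.
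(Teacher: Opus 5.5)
Your plan matches the paper's proof. Both replay \texttt{n7\_cover\_v2.tree} to rebuild each listed slice, apply Lemma~\ref{lem:centered} to the full twenty-one-pair force sum with outward-rounded trigonometric enclosures and vertex-enumerated linear supports, and tally the $5\,038$ passing tests by family.

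One correction to your Step~2: the weights and signs $\varepsilon_\ell$ come from the accompanying list, not from the tree's C records. The paper states that the cover keeps its original stored weights and that these are only alternative exclusions. Sign changes from normalization or reversal are absorbed in $\varepsilon_\ell$, so you need no weight matching and no rescaling of the margin.
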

\begin{proof}[Rigorous verification of the force inequalities]
Lemma~\ref{lem:centered} is applied to the full twenty-one
pair sum on each closed sum slice. The additional verification program reconstructs the boxes from
the archived subdivision record and checks the inequalities using only
integer and rational arithmetic. Its trigonometric bounds use interval Horner polynomials with
Lagrange remainders; its linear supports are obtained by enumerating the slice
vertices. It does not call the earlier verification program or the program used to
choose the weights. All
$5\,038$ tests give the stated bounds,
and each resulting interval is contained in its earlier certified interval.
The electronic supplement identifies the source files, exact bounds,
transformations of directions, and instructions for repeating the checks.
\end{proof}
These two combinations provide alternative force exclusions on the
$5\,038$ listed subregions. The verification of all C subregions in
Proposition~\ref{n7:prop:globalcover} uses the original weights stored in
\texttt{n7\_cover\_v2.tree}. The proof of that proposition does not depend
on these alternative exclusions.

\section{Reflection symmetry for eight satellites}\label{sec:n8}
There are four vertex pairings and four edge pairings. The first type
has four defect coordinates; the second has three after fixing rotation.
The common negative-subspace principle still applies, but the four-dimensional
vertex matrix requires a separate estimate for positive cross terms
sharing the same variable. The final verification also uses the affine
matrix upper-bound criterion TN.
\subsection{The vertex reflection system}

Write the gaps from a candidate fixed satellite as
\begin{equation}\label{n8:eq:Vgap}
 (u_1+p_1,u_2+p_2,u_3+p_3,u_4+p_4,
  u_4-p_4,u_3-p_3,u_2-p_2,u_1-p_1),
\end{equation}
where $u_i>|p_i|$ and $\sum_{i=1}^4u_i=\pi$. Define
\[
 s_i=\sum_{k=1}^iu_k\ (1\leq i\leq3),\qquad
 t_i=\sum_{k=1}^ip_k\ (1\leq i\leq4).
\]
Thus $0<s_1<s_2<s_3<\pi$. The angular positions, in order, are
\begin{equation}\label{n8:eq:Vangles}
 0,\quad s_1+t_1,\ s_2+t_2,\ s_3+t_3,\quad
 \pi+t_4,\quad 2\pi-s_3+t_3,\ 2\pi-s_2+t_2,\ 2\pi-s_1+t_1.
\end{equation}
At $t=0$ the selected vertex pairing becomes a geometric reflection.
At a general defect, $\pi+t_4$ is not assumed to be antipodal to $0$.

In this subsection, sums indexed by $i<j$ range over $1\leq i<j\leq3$.
The twenty-eight interactions consist of three interactions within individual
pairs, twelve pairs of reflection-related terms, and one unpaired term:
\begin{align}
 \U^V_s(t)={}&\sum_{i=1}^3\widehat W(2s_i)+\widehat W(\pi+t_4)
   +\sum_{i=1}^3J_{s_i}(t_i)\label{n8:eq:Vpotential}\\
 &+\sum_{i=1}^3J_{\pi-s_i}(t_4-t_i)
   +\sum_{i<j}\bigl[J_{s_j-s_i}(t_j-t_i)+J_{s_i+s_j}(t_j-t_i)\bigr].\nonumber
\end{align}
Put
\begin{align*}
 a_i&=\Phi(s_i,t_i),&c_i&=\Phi(\pi-s_i,t_4-t_i),\\
 b_{ij}&=\Phi(s_j-s_i,t_j-t_i)+\Phi(s_i+s_j,t_j-t_i),&H&=\Phi(\pi,t_4).
\end{align*}
Define the symmetric $4\times4$ matrix $L_V$ through
\begin{equation}\label{n8:eq:VQ}
 Q_V(Z)=Z^TL_VZ
 =\sum_{i=1}^3a_iZ_i^2
 +\sum_{i<j}b_{ij}(Z_j-Z_i)^2
 +\sum_{i=1}^3c_i(Z_4-Z_i)^2+\frac H2Z_4^2.
\end{equation}

\begin{proposition}\label{n8:prop:Vkernel}
At a coorbital central configuration, $L_Vt=0$. Moreover
\[
 \nabla_t\U^V_s(t)=2L_V(s,t)t.
\]
\end{proposition}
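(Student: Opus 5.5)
The plan is to verify the restricted potential decomposition \eqref{n8:eq:Vpotential} directly from the angles \eqref{n8:eq:Vangles}, and then apply Lemma~\ref{lem:avg} term by term, exactly as in Proposition~\ref{n7:prop:matrix} and Proposition~\ref{n6:prop:vertex-system}.

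First, I will account for all $\binom82=28$ interactions. Write the moving points as $\pm s_i+t_i$ for $i=1,2,3$, the fixed satellite as $0$, and the last point as $\pi+t_4$.
\begin{itemize}
\item The three within-pair distances are $2s_i$. They give $\widehat W(2s_i)$, which is independent of $t$.
\item The interaction of $0$ with the pair $i$ has separations $s_i\pm t_i$, giving $J_{s_i}(t_i)$.
\item The interaction of $\pi+t_4$ with the pair $i$ has separations $\pi-s_i+(t_4-t_i)$ and $\pi+s_i+(t_4-t_i)$. The second is, modulo $2\pi$ and by evenness of $\widehat W$, the complement $\pi-s_i-(t_4-t_i)$. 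So this block is $J_{\pi-s_i}(t_4-t_i)$.
\item The four cross interactions between pairs $i<j$ are the block \eqref{eq:Kblock} with half-separations $s_i,s_j$ and center displacement $t_j-t_i$. Using $J_{2\pi-z}=J_z$ where needed, they give $J_{s_j-s_i}(t_j-t_i)+J_{s_i+s_j}(t_j-t_i)$.
\item The unpaired term $\widehat W(\pi+t_4)$ is the interaction between $0$ and $\pi+t_4$.
\end{itemize}
The count is $3+3\cdot2+3\cdot2+3\cdot4+1=28$.

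Second, I will check the interval-domain condition required for each $J$ block. Every endpoint $z\pm h$ is a proper positive arc sum of the actual gaps \eqref{n8:eq:Vgap}, or its complement. For example, $\pi-s_i\pm(t_4-t_i)$ are the arcs from point $i$ (respectively $8-i$) to $\pi+t_4$. Along the segment $\lambda t$ the gaps are convex combinations of the original gaps and their averages $u_k>0$, so they remain positive. Hence Lemma~\ref{lem:avg} applies on the segment.

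Third, I will differentiate using $J_z'(h)=2h\Phi(z,h)$ and $f(\pi+h)=h\Phi(\pi,h)$. Each block $J_{z}(b^Tt)$ contributes $2\Phi(z,b^Tt)\,b\,b^Tt$ to $\nabla_t\U^V_s$. The unpaired term contributes $H t_4 e_4 = 2\cdot\frac H2 e_4e_4^Tt$. Collecting the vectors $b=e_i$, $e_j-e_i$, $e_4-e_i$, $e_4$ reproduces exactly the rank-one sum whose quadratic form is \eqref{n8:eq:VQ}. This gives $\nabla_t\U^V_s=2L_Vt$.

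Finally, each $t_i$ is a linear combination of the angular positions, so each $\partial_{t_i}\U^V_s$ is a linear combination of the forces $F_k$. At a central configuration all $F_k=0$, and therefore $L_Vt=0$. This is the last assertion of Lemma~\ref{lem:avg}.

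The main care point is the $c_i$ blocks. Here I must confirm that the center of the block is $\pi-s_i$ rather than $\pi+s_i$, and that the full interval lies in $(0,2\pi)$ rather than only its endpoints modulo $2\pi$. The remaining steps are bookkeeping parallel to the seven-satellite case.
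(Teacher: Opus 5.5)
Your proposal is correct and follows the paper's route. The paper applies $J_z'(h)=2h\Phi(z,h)$ to the decomposition \eqref{n8:eq:Vpotential}, uses $f(\pi+t_4)=t_4\Phi(\pi,t_4)$ to get the factor $H/2$, and notes that a critical point of the full potential has zero restricted gradient. Your additional verification of the 28-term decomposition and of the interval-domain conditions is sound, and the segment-positivity check is not needed here, only later for the averaged-Hessian identity in Proposition~\ref{n8:prop:average}.
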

\begin{proof}
Apply \eqref{eq:J} to \eqref{n8:eq:Vpotential}. For the unpaired term,
$f(\pi+t_4)=t_4\Phi(\pi,t_4)$ because $f$ is odd about $\pi$.
Dividing the restricted gradient by two therefore produces $H/2$, not
$H$, in \eqref{n8:eq:VQ}. A critical point of the unrestricted potential
has zero restricted gradient.
\end{proof}

\subsection{The edge reflection system}

The second type has gaps
\begin{equation}\label{n8:eq:Egap}
 (u_1+p_1,u_2+p_2,u_3+p_3,e,u_3-p_3,u_2-p_2,u_1-p_1,h),
\end{equation}
where $u_i>|p_i|$, $e,h>0$, and $2\sum u_i+e+h=2\pi$.
Use the midpoint of the $h$ gap as angular origin and set
\[
 s_0=\frac h2,\quad s_i=s_0+\sum_{k=1}^iu_k,\quad
 t_0=0,\quad t_i=\sum_{k=1}^ip_k\quad (1\leq i\leq3).
\]
Then $0<s_0<s_1<s_2<s_3=\pi-e/2<\pi$ and the positions are
\begin{equation}\label{n8:eq:Eangles}
 s_0,\ s_1+t_1,\ s_2+t_2,\ s_3+t_3,
 2\pi-s_3+t_3,\ 2\pi-s_2+t_2,\ 2\pi-s_1+t_1,\ 2\pi-s_0.
\end{equation}
All twenty-eight interactions give
\begin{equation}\label{n8:eq:Epotential}
 \U^E_s(t)=\sum_{i=0}^3\widehat W(2s_i)
 +\sum_{0\leq i<j\leq3}
  \bigl[J_{s_j-s_i}(t_j-t_i)+J_{s_i+s_j}(t_j-t_i)\bigr].
\end{equation}
Define
\[
 B_{ij}=\Phi(s_j-s_i,t_j-t_i)+\Phi(s_i+s_j,t_j-t_i)
\]
and a $3\times3$ symmetric matrix $L_E$ by
\begin{equation}\label{n8:eq:EQ}
 Q_E(Z)=Z^TL_EZ=\sum_{0\leq i<j\leq3}B_{ij}(Z_j-Z_i)^2,
 \qquad Z_0=0.
\end{equation}
Exactly as before,
\begin{equation}\label{n8:eq:Ekernel}
 \nabla_t\U^E_s(t)=2L_E(s,t)t,\qquad L_Et=0
 \quad\hbox{at a central configuration}.
\end{equation}
Fixing $t_0=0$ removes the common rotational freedom. The quadratic form
is then written in three independent variables. Its numerical eigenvalues
need not equal the nonzero eigenvalues of the matrix written in four
variables; changes between independent coordinates preserve inertia through
congruence, rather than preserving individual eigenvalues.

\subsection{Coverage of reflection types and the average-Hessian identity}
A reflection-invariant set of eight distinct points has either two fixed
satellites or no fixed satellite. In the former case the fixed satellites
are antipodal and opposite in cyclic order. Thus the pairings in
\eqref{n8:eq:Vgap} and \eqref{n8:eq:Egap}, with four cyclic shifts of each,
include all geometric reflection types. Opposite choices of anchor
represent the same pairing, with different coordinates.

\begin{proposition}\label{n8:prop:average}
For either type $\ast\in\{V,E\}$,
\begin{equation}\label{n8:eq:average}
 L_\ast(s,t)=\frac12\int_0^1D_t^2\U^\ast_s(\lambda t)\dd\lambda,
 \qquad L_\ast(s,t)\preceq L_\ast(s,0).
\end{equation}
For every fixed $Z$, the quadratic form $Q_\ast(Z)$ is concave in the
original gaps.
\end{proposition}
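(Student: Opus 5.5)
The plan is to reduce Proposition~\ref{n8:prop:average} to Lemma~\ref{lem:avg}, exactly as was done for seven satellites in Proposition~\ref{n7:prop:matrix}. The first step is to check the lemma's structural hypothesis for both restricted potentials \eqref{n8:eq:Vpotential} and \eqref{n8:eq:Epotential}. Each must be a sum of three kinds of terms: terms independent of $t$ (the $\widehat W(2s_i)$), paired blocks $J_{z_\nu(s)}(b_\nu^Tt)$ with $b_\nu$ a fixed integer vector ($e_i$, $e_4-e_i$, or $e_j-e_i$), and, in the vertex case only, the single unpaired term $\widehat W(\pi+e_4^Tt)$. I would confirm the count of twenty-eight interactions by listing, from \eqref{n8:eq:Vangles} and \eqref{n8:eq:Eangles}, which pairs of points combine into each block via $\widehat W(2\pi-x)=\widehat W(x)$.

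The second step is to verify the interval-domain condition. For every block, the whole segment $[z-|h|,z+|h|]$ must lie in $(0,2\pi)$ along the segment $\lambda t$, $0\le\lambda\le1$. Each endpoint $z\pm\lambda h$ is an arc between two points of the configuration in which every gap $u_k\pm\lambda p_k$ is a convex combination of an actual gap and the positive average $u_k$. Such an endpoint is therefore a positive proper arc sum, or its complement, of positive gaps. So the convex defect domain containing $0$ and $t$ is collision-free. Here I would note that $e$ and $h$ in the edge case, and $u_4\pm p_4$ in the vertex case, do not depend on $\lambda$ in a harmful way, since $|p_k|<u_k$.

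With the hypotheses in place, Lemma~\ref{lem:avg} gives the integral identity term by term. Each paired block contributes $\frac12\int_0^1 D_t^2 J_z(\lambda b^Tt)\,d\lambda=\Phi(z,b^Tt)\,bb^T$, and the unpaired term contributes $\frac12\Phi(\pi,t_4)e_4e_4^T=\frac H2e_4e_4^T$. Summing these contributions reproduces exactly the rank-one forms \eqref{n8:eq:VQ} and \eqref{n8:eq:EQ}; in the edge case one first sets $Z_0=t_0=0$. The bound $L_\ast(s,t)\preceq L_\ast(s,0)$ then follows from $\Phi(z,h)\le g(z)=\Phi(z,0)$ in \eqref{eq:phi-properties} together with positive semidefiniteness of each $bb^T$.

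For concavity, the centers $s_i$, $s_j\pm s_i$, $\pi-s_i$ and the defects $t_j-t_i$, $t_4-t_i$ are all affine in the original gaps. This holds because $s_i$ and $t_i$ are half-sums and half-differences of gap partial sums. Hence for fixed $Z$, $Q_\ast(Z)$ is a nonnegative combination of values of the jointly concave function $\Phi$ from Lemma~\ref{lem:phi} evaluated at affine arguments, and is therefore concave.

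The only real obstacle is bookkeeping rather than analysis. One must be sure that the edge-type matrix written in three variables after fixing $t_0=0$ still equals the averaged Hessian in those variables. This holds because restricting to $t_0=0$ is a linear substitution applied before differentiating, not after. One must also check that no block is double counted. I would settle both points by listing all twenty-eight pairs explicitly.
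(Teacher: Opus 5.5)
Your proposal is correct and follows essentially the same route as the paper. Both compute the integrated Hessian term by term: $\Phi(z,b^Tt)\,bb^T$ for each paired block and $\tfrac12\Phi(\pi,t_4)e_4e_4^T$ for the unpaired antipodal term. Both obtain collision-freeness of the segment from convex combinations of each gap with its symmetric average, derive the comparison from $\Phi\le g$ with positive semidefinite rank-one multipliers, and get concavity from joint concavity of $\Phi$ at affine arguments. Your explicit count of the twenty-eight interactions and your remark on substituting $t_0=0$ before differentiating are extra bookkeeping that the paper leaves implicit.
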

\begin{proof}
For a paired block, the integrated Hessian coefficient is
\[
 \frac12\int_0^1[g(z+\lambda h)+g(z-\lambda h)]\dd\lambda=\Phi(z,h).
\]
For the unpaired vertex interaction it is $\Phi(\pi,t_4)/2$, using
symmetry of $g$ about $\pi$. This proves the matrix identity term by term,
not merely its action on $t$. Every gap along the segment $\lambda t$
is a convex combination of the original gap and its positive symmetric
average; hence the whole segment is collision-free. Every block endpoint
is a positive proper arc sum, or its complement, which proves the domain
conditions for the preceding identities. The comparison follows from
$\Phi\leq g$, since all matrix multipliers are positive semidefinite
rank-one forms. The same decomposition and joint concavity of $\Phi$,
with affine arguments in the gaps, prove the last assertion.
\end{proof}

\subsection{The negative hyperplane for vertex pairings}

For $0<a<b<\pi$ put
\[
 K(a,b)=g(b-a)+g(a+b).
\]
Midpoint concavity and reflection of the second argument give
\begin{equation}\label{n8:eq:Kbounds}
 K(a,b)\leq2g(b),\qquad K(a,b)\leq2g(\pi-a).
\end{equation}
In particular,
\begin{equation}\label{n8:eq:bipartite}
 K(a,b)>0\quad\Longrightarrow\quad a<\pi/2<b.
\end{equation}
Indeed both $g(b)$ and $g(\pi-a)$ must be positive, whereas $g$ is increasing
on $(0,\pi)$ and $g(\pi/2)<0$.

\begin{theorem}[Vertex negative hyperplane]\label{n8:thm:Vplane}
For every collision-free eight-gap vector and every vertex candidate,
\begin{equation}\label{n8:eq:Vplane}
 Z^TL_V(s,t)Z\leq-\frac14(Z_1^2+Z_2^2+Z_3^2),\qquad Z_4=0.
\end{equation}
Thus $L_V$ has at least three strictly negative eigenvalues.
\end{theorem}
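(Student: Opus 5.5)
The plan mirrors the proof of Theorem~\ref{n7:thm:plane}. We first reduce to zero defect, then split the form into diagonal and cross parts, and finally absorb the positive cross terms using the scalar inequalities of Lemmas~\ref{lem:six} and~\ref{lem:seven}.

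\emph{Reduction to zero defect.} By Proposition~\ref{n8:prop:average}, $L_V(s,t)\preceq L_V(s,0)$, so it suffices to prove \eqref{n8:eq:Vplane} at $t=0$. Put $Z_4=0$ in \eqref{n8:eq:VQ}. The $H$ term disappears, and each $c_i(Z_4-Z_i)^2$ becomes $g(\pi-s_i)Z_i^2$. At $t=0$ we have $b_{ij}=K(s_i,s_j)$. The form therefore becomes
\[
 \sum_{i=1}^3U_iZ_i^2+\sum_{i<j}K(s_i,s_j)(Z_j-Z_i)^2,\qquad
 U_i=g(s_i)+g(\pi-s_i).
\]
By \eqref{eq:fivepair}, each $U_i\leq2g(\pi/2)=-3\sqrt2/4<-1/4$. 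This gives the required margin when every $K\leq0$, since nonpositive cross terms may simply be dropped.

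\emph{Structure of the positive cross terms.} By \eqref{n8:eq:bipartite}, $K(s_i,s_j)>0$ forces $s_i<\pi/2<s_j$. Since $s_1<s_2<s_3$, the positive cross terms form a star. Either its centre is $Z_1$ with leaves among $Z_2,Z_3$ (when $s_1<\pi/2<s_2$), or its centre is $Z_3$ with leaves among $Z_1,Z_2$ (when $s_2<\pi/2<s_3$). At most two positive terms occur, and they share one variable; this is the new feature compared with seven satellites. I would treat the first case; the second is its mirror image, with $a$ and $\pi-b$ exchanging roles. Nonpositive cross terms are dropped.

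\emph{Absorbing the star.} The first case has centre $Z_1$, $a=s_1$, and leaves $b_j=s_j>\pi/2$. I would use the elementary splitting
\[
 K_j(Z_j-Z_1)^2\leq K_j(1+\lambda_j)Z_1^2+K_j(1+\lambda_j^{-1})Z_j^2
\]
with $\lambda_j$ chosen depending on which of the two bounds in \eqref{n8:eq:Kbounds} is tighter. For the centre we bound $K_j\leq2g(\pi-a)$, and for the leaf we bound $K_j\leq2g(b_j)$.
\begin{itemize}
\item On the centre, the coefficient of $Z_1^2$ is at most $g(a)+g(\pi-a)+2g(\pi-a)\sum_j(1+\lambda_j)$. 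Taking $\lambda_j=1/2$ gives $g(a)+7g(\pi-a)$, which is $<-1/4$ by \eqref{eq:seven}.
\item On leaf $j$, the coefficient is at most $g(b_j)+g(\pi-b_j)+6g(b_j)=7g(b_j)+g(\pi-b_j)$. This is again $<-1/4$ by \eqref{eq:seven}, applied at $\pi-b_j$.
\end{itemize}
With only one positive term, the same splitting with $\lambda=1$ leaves coefficients $g(a)+5g(\pi-a)$ and $5g(b)+g(\pi-b)$. These are negative by \eqref{eq:fivepair}, and \eqref{eq:six} provides the margin $\delta_6>1/4$.

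\emph{Main obstacle.} The hard step is this bookkeeping with a shared centre. The sum $\sum_j(1+\lambda_j)$ must stay at most three, while each leaf factor $1+\lambda_j^{-1}$ must stay at most three. The choice $\lambda_j=1/2$ satisfies both exactly, but it leaves no slack beyond the constant $-1/4$ of Lemma~\ref{lem:seven}. If some leaf coefficient is too large, I would instead use the sharper rational bound from the proof of Lemma~\ref{lem:seven}, where the quadratic in $x$ lies below $-2$ after dividing by eight. Alternatively, I would use $U_i$ itself rather than only the midpoint bound. Having checked the case split and the coefficient identities, the conclusion \eqref{n8:eq:Vplane} follows. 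Lemma~\ref{lem:inertia} then yields three strictly negative eigenvalues.
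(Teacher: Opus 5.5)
Your proposal is correct and is essentially the paper's proof. You reduce to $t=0$ via $L_V(s,t)\preceq L_V(s,0)$, note that the positive cross terms form a star joining parameters on opposite sides of $\pi/2$, and absorb them with the split $(Z_c-Z_j)^2\le\frac32Z_c^2+3Z_j^2$ (your $\lambda_j=1/2$), so that Lemma~\ref{lem:seven} puts every coefficient below $-1/4$. The fallback options in your last paragraph are unnecessary, because the strict inequality in \eqref{eq:seven} already gives exactly the required margin, and the single-term case is also covered by $\lambda=1/2$ since $g(\pi-s_c)>0$ there.
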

\begin{proof}
By Proposition~\ref{n8:prop:average} it suffices to work at $t=0$.
On $Z_4=0$ the quadratic form is
\[
 \sum_{i=1}^3D_iZ_i^2+\sum_{i<j}K(s_i,s_j)(Z_j-Z_i)^2,
 \qquad D_i=g(s_i)+g(\pi-s_i).
\]
For an upper bound, discard every cross term whose coefficient is
nonpositive. By \eqref{n8:eq:bipartite}, a positive coefficient can only
join a variable whose angular parameter is below $\pi/2$ to one whose
parameter is above it. With three variables, at most two positive cross
terms remain, and if there are two, they share a variable. Each variable
that occurs in no positive cross term satisfies
\[
 D_i\leq2g(\pi/2)=-\frac{3\sqrt2}{4}<-\frac14.
\]
If positive cross terms remain, let $c$ index their shared variable.
When there is only one such term, either of its two variables may be
chosen. For each other index $j$ occurring in a positive cross term, use
\[
 (Z_c-Z_j)^2\leq\frac32Z_c^2+3Z_j^2,
\]
because the difference of the right side and left side is
$\tfrac12(Z_c+2Z_j)^2$.

Suppose first that the angular parameter indexed by $c$ lies below
$\pi/2$. The other parameter in each positive cross term then lies above
$\pi/2$, and the corresponding coefficient satisfies
\[
 K(s_c,s_j)\leq2g(\pi-s_c),\qquad
 K(s_c,s_j)\leq2g(s_j).
\]
Both upper bounds are positive. Since at most two positive cross terms
contain the variable indexed by $c$, its resulting coefficient is at most
\[
 D_c+\frac32\sum_jK(s_c,s_j)
 \leq g(s_c)+7g(\pi-s_c)<-\frac14.
\]
The coefficient of each other variable in these terms is at most
\[
 D_j+3K(s_c,s_j)
 \leq g(\pi-s_j)+7g(s_j)<-\frac14.
\]
If the angular parameter indexed by $c$ lies above $\pi/2$, use the
common bound $2g(s_c)$ for its coefficient and the bound
$2g(\pi-s_j)$ for each other variable. Lemma~\ref{lem:seven} gives
the same conclusion. Combining these bounds with those for variables that
occur in no positive cross term proves \eqref{n8:eq:Vplane}.
\end{proof}

\begin{remark}
Using $(a-b)^2\leq2a^2+2b^2$ would give too large a coefficient
for a variable shared by two positive cross terms. The weights $3/2$
and $3$ reduce the required scalar coefficient from nine to seven, so
that Lemma~\ref{lem:seven} applies. The argument uses the fact that there
are only three variables and at most two positive cross terms; it does not
establish the same estimate for an arbitrary number of reflected pairs.
\end{remark}

\subsection{The negative plane for edge pairings}

\begin{theorem}[Edge negative plane]\label{n8:thm:Eplane}
For every collision-free eight-gap vector and every edge candidate,
\begin{equation}\label{n8:eq:Eplane}
 Z^TL_E(s,t)Z\leq-\frac14(Z_1^2+Z_2^2),\qquad Z_3=0.
\end{equation}
Thus $L_E$ has at least two strictly negative eigenvalues.
\end{theorem}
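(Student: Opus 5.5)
By Proposition~\ref{n8:prop:average} we have $L_E(s,t)\preceq L_E(s,0)$, so I will prove \eqref{n8:eq:Eplane} at $t=0$ and then transfer it to arbitrary defects. With $Z_0=0$ and $Z_3=0$ imposed in \eqref{n8:eq:EQ}, the term $B_{03}$ drops out. The restricted form becomes
\[
 D_1Z_1^2+D_2Z_2^2+K(Z_2-Z_1)^2,
\]
where
\[
 D_1=B_{01}+B_{13},\qquad D_2=B_{02}+B_{23},\qquad K=B_{12}.
\]
At $t=0$ these are sums of values of $g$:
\begin{align*}
 D_i&=g(s_i-s_0)+g(s_i+s_0)+g(s_3-s_i)+g(s_3+s_i),\\
 K&=g(s_2-s_1)+g(s_2+s_1).
\end{align*}

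First I bound the diagonal coefficients. The arguments $s_i-s_0$ and $s_i+s_0$ average to $s_i$. Using $g(s_3+s_i)=g(2\pi-s_3-s_i)$, the arguments $s_3-s_i$ and $2\pi-s_3-s_i$ average to $\pi-s_i$. All four arguments lie in $(0,2\pi)$, so midpoint concavity of $g$ (Lemma~\ref{lem:phi}) gives
\[
 D_i\leq 2g(s_i)+2g(\pi-s_i),\qquad i=1,2.
\]
By the second inequality in \eqref{eq:fivepair}, this is at most $4g(\pi/2)=-3\sqrt2/2<-1/4$.

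Next I treat the cross term. If $K\leq0$, I discard it, and the diagonal bounds already give \eqref{n8:eq:Eplane}. If $K>0$, I use $K=K(s_1,s_2)$ in the notation of \eqref{n8:eq:Kbounds}. That gives $K\leq 2g(s_2)$ and $K\leq2g(\pi-s_1)$, and by \eqref{n8:eq:bipartite} also $s_1<\pi/2<s_2$. I then apply $(Z_2-Z_1)^2\leq 2Z_1^2+2Z_2^2$, assigning the bound $2g(\pi-s_1)$ to the $Z_1$ part and $2g(s_2)$ to the $Z_2$ part. The coefficient of $Z_1^2$ is then at most
\[
 2g(s_1)+2g(\pi-s_1)+4g(\pi-s_1)=2\{g(s_1)+3g(\pi-s_1)\}<-\tfrac14,
\]
by \eqref{eq:three}. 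The coefficient of $Z_2^2$ is at most
\[
 2g(\pi-s_2)+6g(s_2)=2\{g(\pi-s_2)+3g(s_2)\}<-\tfrac14,
\]
again by \eqref{eq:three}, applied at $\pi-s_2\in(0,\pi)$.

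Here the crude weights $2,2$ suffice, in contrast with the vertex case. Only one cross term is present, and the diagonal coefficients already carry a factor two from the two reflected pairs.

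Combining the two cases yields \eqref{n8:eq:Eplane} at $t=0$, and the Loewner comparison extends it to all collision-free defects. The assertion that $L_E$ has at least two strictly negative eigenvalues then follows from the variational characterization, as in Lemma~\ref{lem:inertia}.

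The only delicate point I expect is bookkeeping. I must check that every argument in the midpoint estimates lies in $(0,2\pi)$, which holds because each is a positive proper arc sum or the complement of one. I must also check that the reflection $g(z)=g(2\pi-z)$ is applied with the correct partner so that the averages come out exactly $s_i$ and $\pi-s_i$.
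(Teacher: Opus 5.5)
Your proposal is correct and follows the paper's proof step for step. You reduce to $t=0$ by the Loewner comparison, bound each diagonal coefficient by $2g(s_i)+2g(\pi-s_i)\le 4g(\pi/2)$ using midpoint concavity, and, when $K>0$, combine $(Z_2-Z_1)^2\le 2Z_1^2+2Z_2^2$ with \eqref{n8:eq:Kbounds} to reach the same bounds $2\{g(s_1)+3g(\pi-s_1)\}$ and $2\{g(\pi-s_2)+3g(s_2)\}$, which Lemma~\ref{lem:seven} closes.
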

\begin{proof}
Again set $t=0$ for an upper bound. On $Z_0=Z_3=0$ the form is
\[
 U_1Z_1^2+U_2Z_2^2+K(Z_2-Z_1)^2,
\]
where
\[
 U_i=K(s_0,s_i)+K(s_i,s_3),\qquad K=K(s_1,s_2).
\]
Concavity yields
\[
 U_i\leq2g(s_i)+2g(\pi-s_i).
\]
If $K\leq0$, these diagonal coefficients are at most
$4g(\pi/2)=-3\sqrt2/2<-1/4$. If $K>0$, use
$(Z_2-Z_1)^2\leq2Z_1^2+2Z_2^2$ and \eqref{n8:eq:Kbounds}. Then
\begin{align*}
 U_1+2K&\leq2\{g(s_1)+3g(\pi-s_1)\}<-\frac14,\\
 U_2+2K&\leq2\{g(\pi-s_2)+3g(s_2)\}<-\frac14.
\end{align*}
Lemma~\ref{lem:seven} proves the result.
\end{proof}

\subsection{Large gaps and a closed domain of representatives}
For an edge pairing place a selected gap in the position $e$ of
\eqref{n8:eq:Egap}. Then $s_3=\pi-e/2$.

\begin{proposition}[Eight-satellite large-gap reduction]\label{n8:prop:large}
If an eight-satellite angular solution has a gap at least $4\pi/5$,
it has a reflection axis bisecting that gap and the opposite gap.
\end{proposition}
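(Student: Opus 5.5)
Place the selected gap in position $e$ of \eqref{n8:eq:Egap}, so that $s_3=\pi-e/2\leq3\pi/5$, and work with the edge matrix $L_E$ of \eqref{n8:eq:EQ}. The plan, modeled on Lemma~\ref{n7:prop:large}, is to show $L_E(s,t)\prec0$. Then \eqref{n8:eq:Ekernel} forces $t=0$. That is exactly the edge reflection bisecting $e$ and $h$, which are opposite gaps.

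By Proposition~\ref{n8:prop:average} it suffices to bound $L_E(s,0)$, that is, to show that every coefficient
\[
 B_{ij}\leq K(s_i,s_j)=g(s_j-s_i)+g(s_i+s_j),\qquad 0\leq i<j\leq3,
\]
is strictly negative. First, by \eqref{n8:eq:Kbounds} we have $K(s_i,s_j)\leq2g(s_j)$. Since $0<s_j\leq s_3\leq3\pi/5<\pi$ and $g$ is increasing on $(0,\pi)$, this gives
\[
 K(s_i,s_j)\leq2g(3\pi/5)<-\tfrac1{64}
\]
by \eqref{eq:g35}. The interval-domain conditions hold because every endpoint $s_j\pm s_i$ is a positive proper arc sum, or its complement, exactly as in Proposition~\ref{n8:prop:average}.

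Next, the form $Q_E(Z)=\sum_{0\leq i<j\leq3}B_{ij}(Z_j-Z_i)^2$ with $Z_0=0$ is then a combination with strictly negative coefficients of the squares $Z_1^2,Z_2^2,Z_3^2$ (from the $i=0$ terms) and of $(Z_j-Z_i)^2$. The diagonal part alone already gives
\[
 Q_E(Z)\leq2g(3\pi/5)\,(Z_1^2+Z_2^2+Z_3^2),
\]
so $L_E\preceq-\tfrac1{64}I$ and $L_E$ is negative definite. Note that the $i=0$ terms involve $s_0=h/2$, and the bound $K(s_0,s_j)\leq2g(s_j)$ still applies, since $s_0<s_j$.

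The only delicate point is that the averaged Hessian identity and the concavity comparison $\Phi\leq g$ require the full segment of defects to remain collision-free. This is already settled by Proposition~\ref{n8:prop:average}. No finite verification is needed, and I expect no real obstacle beyond checking that the hypothesis $e\geq4\pi/5$ puts every center $s_j$ in $(0,3\pi/5]$.
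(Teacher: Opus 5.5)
Your proposal is correct and is essentially the paper's own proof. The paper also places the gap at $e$ to get $s_3\le 3\pi/5$, bounds every $B_{ij}\le g(s_j-s_i)+g(s_j+s_i)\le 2g(s_j)\le 2g(3\pi/5)<0$, and observes that the $i=0$ terms alone make the anchored form $Q_E$ negative definite, so $L_Et=0$ forces $t=0$. Your explicit margin $L_E\preceq-\tfrac1{64}I$ is a small quantitative addition, parallel to the $-I/128$ bound in the seven-satellite large-gap lemma.
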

\begin{proof}
For the selected edge pairing, $s_3\leq3\pi/5$. Its coefficients obey
\[
 B_{ij}\leq g(s_j-s_i)+g(s_j+s_i)
 \leq2g(s_j)\leq2g(3\pi/5)<0.
\]
The exact sign is \eqref{eq:g35}. The form in
\eqref{n8:eq:EQ}, with $Z_0=0$, is strictly negative for every nonzero
$(Z_1,Z_2,Z_3)$: the terms joining vertex zero to each of the other
vertices already force this. Thus the anchored edge matrix is
invertible, and its defect vanishes.
\end{proof}

By Proposition~\ref{prop:collars}, the remaining solutions have
$1/2048<R_i<4/5$. A closed verification domain containing a representative of every
remaining solution is
\begin{equation}\label{n8:eq:root}
 \mathcal K_8=\left\{R\in[1/2048,4/5]^8:
 \sum_{i=0}^7R_i=2,\ R_7\geq R_i\ (i<7),\ R_0\leq R_6\right\}.
\end{equation}
Choose any maximal gap and rotate its index to seven. The gap reversal
$\rho(R)_i=R_{6-i\bmod8}$ fixes that maximum and exchanges positions
zero and six. One of the two orientations satisfies $R_0\leq R_6$,
including equality. At the angular level it may be written
$\theta'_i=\theta_7-\theta_{7-i}$, so $F'_i=-F_{7-i}$; hence it
preserves the zero set. The domain may contain several descriptions of the same configuration;
only the existence of a representative in the domain is needed.

In addition to the common sum bounds, valid tightenings are
\[
 \begin{aligned}
 \ell_7&\leftarrow\max\{\ell_7,\max_i\ell_i,1/4\},&
 u_i&\leftarrow\min\{u_i,u_7\}\quad(i<7),\\
 u_0&\leftarrow\min\{u_0,u_6\},&
 \ell_6&\leftarrow\max\{\ell_6,\ell_0\}.
 \end{aligned}
\]
The value $1/4$ follows from $8R_7\geq\sum R_i=2$.
All boundary points of \eqref{n8:eq:root} are retained. The regular-octagon
neighborhood is not removed, and no local uniqueness result is needed
to justify the coverage.

\subsection{Finite verification using the matrix and force criteria}
The negative hyperplane and negative plane proved above supply the
hypotheses needed for criterion P for both reflection types; criterion N
also applies. Each type has four cyclic shifts. For N8, concavity gives
trapezoid lower bounds and midpoint upper bounds for the divided differences.
Criteria F and C use the original forces. Criterion TN is precisely
Lemma~\ref{lem:TN}: its affine matrix upper bound is checked at every
vertex of the actual fixed-sum section obtained from the preceding
subdivisions and bound tightenings.

\begin{proposition}[Verified covering for eight satellites]\label{n8:prop:cover}
The closed representative domain $\mathcal K_8$ has a finite verified
cover with the following counts of final subregions:
\begin{center}
\begin{tabular}{lrrrrrrr}
\toprule
Criterion&N&P&F&C&TN&X&Total\\
\midrule
Count&9,233&1,224&19,036&10,652&1,178&26&41,349\\
\bottomrule
\end{tabular}
\end{center}
There are 41,348 splits and 82,697 nodes in total. Every final subregion
is verified or proved empty. The fixed final subdivision record has
SHA-256 checksum
\begin{center}
\texttt{2fc2affa2af603cbbb3d6795f49ed163}\allowbreak\texttt{d3abdb3c6ab030b6d4b2edab55dabdcc}.
\end{center}
\end{proposition}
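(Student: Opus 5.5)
The claim is a finite computational assertion: a specific subdivision record of $\mathcal K_8$ exists and every final subregion is empty or satisfies one of N, P, F, C, TN. I would establish it the same way as Propositions~\ref{n6:prop:certificate} and~\ref{n7:prop:globalcover}, with an independent verification program that relies only on exact rational and outward-rounded integer interval arithmetic.

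\textbf{Coverage of $\mathcal K_8$.} The program first reconstructs the root box $[1/2048,4/5]^8$ together with the ordering constraints of \eqref{n8:eq:root}. It then replays every recorded closed split $R_j\le c$ / $R_j\ge c$, and every bound tightening. The tightenings allowed are the common sum tightenings and the four order tightenings displayed before this subsection, including $\ell_7\ge 1/4$. Each of these is justified by Lemma~\ref{lem:covergeometry}. The program confirms that:
\begin{itemize}
\item the node count is $82{,}697$;
\item every node is reached exactly once from the root;
\item no record is reused;
\item the SHA-256 checksum of the record matches.
\end{itemize}
A final subregion of type X is accepted only on a strict inconsistency, such as $\ell_i>u_i$ or $\sum\ell_i>2$.

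\textbf{Matrix criteria.} For each P region, the program enumerates the vertices of the fixed-sum section, again by Lemma~\ref{lem:covergeometry}. It then bounds $w^TL_\ast w$ from below at each vertex, using the recorded pairing (vertex or edge, with one of four cyclic shifts) and the recorded integer direction $w$. The divided-difference lower bounds come from composite trapezoids or from the chord model \eqref{eq:chordlower}. Positivity at all vertices, combined with Theorem~\ref{n8:thm:Vplane} or Theorem~\ref{n8:thm:Eplane} and Lemma~\ref{lem:PN}, gives nonsingularity.

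For each N region, the program proceeds as follows:
\begin{itemize}
\item It bounds each $g$ coefficient above on its argument interval, using the point nearest $\pi$.
\item It assembles the matrix upper bound through $L_\ast(s,t)\preceq L_\ast(s,0)$, which is Proposition~\ref{n8:prop:average}.
\item It certifies strict negative definiteness by exact leading principal minors.
\end{itemize}
For each TN region, it forms the tangent bounds \eqref{eq:tangent} at a rational reference point, keeping the factor $\pi$ in the slope. It then checks both slope and argument endpoints to get a rational affine $A(R)$, and verifies $A(v)+\gamma I\prec0$ at every vertex $v$ of the actual final section, as in Lemma~\ref{lem:TN}.

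\textbf{Force criteria.} For each F or C region, the program recomputes the full $28$-term force sum \eqref{eq:fullforce} or \eqref{eq:H}. It uses Lemma~\ref{lem:centered} with the feasible rational center $c=\ell+\alpha(u-\ell)$, enclosing $f$ and $g$ through rational Horner sine/cosine bounds with Taylor remainders and a Machin enclosure of $\pi$. The affine support term in \eqref{eq:supportcentered} is computed exactly at the slice vertices. The criterion is accepted only if the resulting interval strictly excludes zero. As in the seven-satellite case, I would cross-check with a second implementation that evaluates all pair interactions ungrouped.

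\textbf{Main obstacle.} The hard part is not the logic, which is Proposition~\ref{prop:coverlogic}. It is getting the regions near the regular octagon and near ties of the maximal gap to close. There the reflection matrices are close to singular, and the force combinations nearly cancel. Near the regular octagon, several pairings are simultaneously almost symmetric, so neither P nor the constant-coefficient N bound has enough margin. This is why TN is needed: its affine upper bounds recover the first-order variation of the coefficients, so that negative definiteness survives on boxes of moderate size. In practice, I would first try N on each region, then P with a direction taken from a numerical eigenvector at the center, then C with weights from a least-squares fit of the force Jacobian. TN would be a fallback, and a region failing all of these would be split further along its widest coordinate.
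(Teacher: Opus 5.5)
Your proposal is correct and follows the paper's proof. An independent checker replays every recorded split and bound tightening from the root of $\mathcal K_8$, rejects missing, reused, or unreachable records, and verifies each final subregion either by strict emptiness or by one of N, P, F, C, or TN (the last on the actual reconstructed fixed-sum section); Proposition~\ref{prop:coverlogic} then supplies the continuous implication, and the remaining differences are implementation details (the paper's checker uses integer intervals of denominator $2^{176}$ and trapezoid lower and midpoint upper bounds for the divided differences), with neither the checksum comparison nor your heuristics for building the cover carrying any logical weight.
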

\begin{proof}[Exact finite verification]
The N8 verification uses integer
intervals of denominator $2^{176}$. The program reconstructs every nonempty
box from the initial domain, checks every final subregion, and rejects
missing or unreachable records. For every TN test, it verifies the bound
on the box reconstructed from all preceding splits and bound tightenings.
There are 41,323 nonempty final subregions and 26 regions proved empty.
The 1,178 TN tests check 202,479 vertex occurrences in total. All explicit
checks remain active when program optimization is enabled.

For the continuous implication, every tightening preserves a represented
point and every closed split has a child containing it. After finitely many subdivisions, the point therefore lies in a final
subregion where the corresponding result of
Section~\ref{sec:certificates} holds throughout the fixed-sum section.
The conclusion thus follows from inequalities valid on whole regions,
without inferring them from numerical samples.
\end{proof}

\begin{proposition}[Eight-satellite symmetry]\label{n8:prop:symmetry}
Every collision-free eight-satellite angular solution has a reflection axis.
\end{proposition}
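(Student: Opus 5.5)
The proof assembles results already established for eight satellites, following the pattern of Propositions~\ref{n6:prop:symmetry} and~\ref{n7:prop:symmetry}. Let $\theta$ be a collision-free eight-satellite angular solution, and write its gaps as $x_i$ with $R_i=x_i/\pi$.

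\emph{Large gaps.} First suppose some gap satisfies $x_i\geq4\pi/5$. Then Proposition~\ref{n8:prop:large} applies directly. Placing that gap in position $e$ of \eqref{n8:eq:Egap} makes the anchored edge matrix negative definite. The identity $L_Et=0$ from \eqref{n8:eq:Ekernel} then forces $t=0$, so the configuration has a reflection axis bisecting the chosen gap and its opposite gap.

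\emph{Reduction to $\mathcal K_8$.} Otherwise every gap satisfies $R_i<4/5$, and Proposition~\ref{prop:collars} gives $R_i>1/2048$. Rotate the labels so that a maximal gap sits in position seven. If necessary, apply the reversal $\rho$ to obtain $R_0\leq R_6$. This reversal preserves the zero set of the forces, since $F'_i=-F_{7-i}$. The transformed gap vector therefore lies in the closed domain $\mathcal K_8$ of \eqref{n8:eq:root}. The bound tightenings listed after \eqref{n8:eq:root} are valid implications of the defining inequalities, so they preserve this representative.

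\emph{Applying the verified cover.} Next apply Proposition~\ref{prop:coverlogic} to the cover of Proposition~\ref{n8:prop:cover}. The representative lies in some final subregion. That subregion cannot be of type X, because X regions are empty. It cannot be of type F or C, because a strict sign of some $F_i$ or $H_w$ contradicts \eqref{eq:cc}.

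In a P region, the hypothesis of Lemma~\ref{lem:PN} is a known codimension-one negative subspace. This is supplied by Theorem~\ref{n8:thm:Vplane} for a vertex candidate ($Z_4=0$) and by Theorem~\ref{n8:thm:Eplane} for an edge candidate ($Z_3=0$). Combined with Lemma~\ref{lem:inertia}, the recorded positive direction then makes the candidate matrix invertible. In N and TN regions, invertibility follows from the negative-definite upper bound, using Lemma~\ref{lem:PN} or Lemma~\ref{lem:TN} respectively. In each of these three cases, Proposition~\ref{n8:prop:Vkernel} or \eqref{n8:eq:Ekernel} gives $L_\ast t=0$, hence $t=0$. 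The corresponding vertex or edge pairing is then a geometric reflection. Undoing the relabeling and reversal gives a reflection axis of the original configuration.

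\emph{Main point of care.} The delicate step is bookkeeping rather than analysis. One has to confirm that the reflection criteria are applied in the coordinates for which the negative subspaces were proved. For the edge type this means cumulative defects with $t_0=0$ fixed. It also has to be checked that the representative lies in the closed domain including its boundary, such as equal maximal gaps or the regular octagon; this holds because all the inequalities are weak. With these two points checked, no further analysis is needed.
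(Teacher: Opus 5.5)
Your proposal is correct and follows the paper's proof essentially step for step. You treat the large-gap case by Proposition~\ref{n8:prop:large}, reduce to $\mathcal K_8$ via the explicit gap bound and relabeling/reversal, and then use the verified cover of Proposition~\ref{n8:prop:cover}: X regions are empty, F and C contradict the force equations, and P (with the negative subspaces of Theorems~\ref{n8:thm:Vplane} and~\ref{n8:thm:Eplane}), N, and TN give an invertible candidate matrix, so $L_\ast t=0$ forces $t=0$.
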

\begin{proof}
A large gap is treated by Proposition~\ref{n8:prop:large}. Otherwise
the explicit gap bound and relabeling place a representative in
$\mathcal K_8$. The verified cover places it in a final subregion.
Type X is empty, while F and C contradict the angular equations.
Criteria P, N, and TN make one candidate matrix invertible, with the
appropriate fixed negative subspace used for P. The exact matrix equation
then forces the corresponding reflection defect to vanish.
Undoing the representative choice preserves reflection symmetry.
\end{proof}

\section{A separate classification for five satellites}\label{sec:n5classification}
Reflection symmetry reduces the five-satellite problem to two variables.
We now prove the stronger three-class classification. Its additional
existence and uniqueness checks are separate from the symmetry argument
and are not used to prove symmetry for any $N$.

\subsection{Two parameters for symmetric configurations}
After cyclic relabeling and reversal, a reflection-symmetric five-gap
vector has the form
\begin{equation}\label{n5:eq:palindrome}
 (a,b,b,a,c),\qquad a,b,c>0,\qquad 2a+2b+c=2\pi.
\end{equation}
For the five-satellite gap equations
\[
 E_i=f(x_{i-1})+f(x_{i-2}+x_{i-1})-f(x_i)-f(x_i+x_{i+1}),
\]
substitution gives $(E_0,E_1,E_2,E_3,E_4)=(S_1,S_2,0,-S_2,-S_1)$,
where, using $c=2\pi-2a-2b$,
\begin{align}
 S_1(a,b)&=f(c)+f(a+c)-f(a)-f(a+b),\label{n5:eq:S1}\\
 S_2(a,b)&=f(a)+f(a+c)-f(b)-f(2b).\label{n5:eq:S2}
\end{align}
Consequently, the two displayed equations are equivalent to the full
angular equations for configurations in this symmetric parameterization.

Proposition~\ref{prop:collars} gives the gap bound $x_i>1/10$ in radians.
The archived classification record uses the following exact rational
number, whose denominator is a power of two:
\begin{equation}\label{n5:eq:delta}
 \delta=\frac{7205759403792793}{72057594037927936}<\frac1{10},
 \qquad\Lambda=2\pi-5\delta.
\end{equation}
This $\delta$ is a physical angle, not a normalized gap. On $[0,1]^2$
put
\begin{equation}\label{n5:eq:chart}
 a=\delta+\frac\Lambda2u,\quad
 b=\delta+\frac\Lambda2(1-u)v,\quad
 c=\delta+\Lambda(1-u)(1-v).
\end{equation}
Then $a,b,c\geq\delta$ and the sum constraint holds identically.

\begin{lemma}[Coverage and interior injectivity of the chart]\label{n5:lem:chart}
The map \eqref{n5:eq:chart} covers the closed truncated simplex
$\{a,b,c\geq\delta:2a+2b+c=2\pi\}$ and is one-to-one on its
relative interior. Every symmetric angular solution is represented by
an interior point of $[0,1]^2$.
\end{lemma}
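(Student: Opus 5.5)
The plan is to treat the chart \eqref{n5:eq:chart} as an explicit, elementary change of variables and invert it directly.

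\emph{Step 1: coverage of the truncated simplex.} Write $\alpha=a-\delta$, $\beta=b-\delta$, $\gamma=c-\delta$. Since $2a+2b+c=2\pi$ and $5\delta+\Lambda=2\pi$, the truncated simplex becomes
\[
 \{\alpha,\beta,\gamma\geq0:\ 2\alpha+2\beta+\gamma=\Lambda\}.
\]
Here $\Lambda>0$ because $\delta<1/10$. Given such a point, set $u=2\alpha/\Lambda\in[0,1]$.
\begin{itemize}
\item If $u<1$, set $v=2\beta/(\Lambda(1-u))$. Then $v\in[0,1]$ because $2\beta\leq\Lambda-2\alpha=\Lambda(1-u)$. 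The third coordinate follows from the sum constraint: $\gamma=\Lambda(1-u)(1-v)$.
\item If $u=1$, then $\beta=\gamma=0$, and any $v$ works.
\end{itemize}
Hence the image of $[0,1]^2$ is exactly the closed truncated simplex. The reverse inclusion was already noted: $a,b,c\geq\delta$ and the sum constraint holds identically.

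\emph{Step 2: interior injectivity.} On the relative interior we have $\alpha,\beta,\gamma>0$. This forces $u=2\alpha/\Lambda\in(0,1)$, so $1-u>0$, and then $v$ is uniquely determined by $v=2\beta/(\Lambda(1-u))$. Moreover $v\in(0,1)$, since $\beta>0$ and $\gamma>0$.

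So the inverse formulas show the map restricted to the open square is a bijection onto the relative interior. Conversely, any point of $(0,1)^2$ maps to a point with $\alpha,\beta,\gamma>0$. The only non-injectivity is the collapse of the edge $u=1$ to the single vertex $\alpha=\Lambda/2$, $\beta=\gamma=0$, which lies on the boundary.

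\emph{Step 3: solutions lie in the interior.} Take any symmetric angular solution and write it, after cyclic relabeling and reversal, in the form \eqref{n5:eq:palindrome}. By Proposition~\ref{prop:collars}, every gap exceeds $1/10$ in radians, and $1/10>\delta$ by \eqref{n5:eq:delta}. Hence $a,b,c>\delta$ strictly, so the point lies in the relative interior, and its unique preimage from Step~2 lies in $(0,1)^2$.

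\emph{Main obstacle.} There is essentially none. The only point needing care is the degenerate edge $u=1$, where injectivity fails; that is why the statement claims injectivity only on the interior. One also needs the strict inequality $\delta<1/10$, which is an exact comparison of rationals: $7205759403792793\cdot10<72057594037927936$.
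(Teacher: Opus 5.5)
Your proposal is correct and matches the paper's proof step for step. You invert the chart via $u=2(a-\delta)/\Lambda$ and $v=2(b-\delta)/(\Lambda(1-u))$, treat the collapsed edge $u=1$ separately, and use the strict gap bound $x_i>1/10>\delta$ to place every solution in the open square, with your extra checks ($v\in(0,1)$ from $\beta,\gamma>0$, and the exact comparison showing $\delta<1/10$) only making explicit what the paper leaves implicit.
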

\begin{proof}
Given a point in the simplex, set $u=2(a-\delta)/\Lambda\in[0,1]$.
If $u<1$, the remaining identity
$2(b-\delta)+(c-\delta)=\Lambda(1-u)$ gives the unique
$v=2(b-\delta)/(\Lambda(1-u))\in[0,1]$. If $u=1$, then
$b=c=\delta$, and any $v$ represents that boundary point.
In the relative interior both $u$ and $v$ lie strictly between zero
and one. The strict gap bound places every solution there.
\end{proof}

Define the two-dimensional vector function
\begin{equation}\label{n5:eq:rootfun}
 \mathcal S(u,v)=\bigl(S_1(a(u,v),b(u,v)),S_2(a(u,v),b(u,v))\bigr).
\end{equation}
All force arguments stay away from collisions on the full closed square.
The regular pentagon is represented exactly by $(u,v)=(2/5,2/3)$.

\subsection{Verified existence and uniqueness of the three solutions}
\begin{proposition}[Verified classification in two variables]\label{n5:prop:roots}
The function $\mathcal S$ has exactly three zeros in $[0,1]^2$, all
interior. One is $(2/5,2/3)$. The other two lie in the two disjoint boxes identified in the archived
classification data, each proved to contain exactly one zero,
with approximate centers
\[
 \begin{gathered}
 (0.27522050904073725,\ 0.8319311343414508),\\
 (0.19599910548160160,\ 0.20217475259384446).
 \end{gathered}
\]
Their approximate coordinate half-widths are $3\times10^{-3}$.
The first has all gaps smaller than $\pi$ and the second has $c>\pi$.
The binary subdivision record has 835 nodes.
\end{proposition}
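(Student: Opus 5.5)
The statement is a computer-assisted zero count for the two-variable map $\mathcal S$ on the closed square $[0,1]^2$. I would prove it by a verified branch-and-bound over a recorded binary subdivision. Every final box is of one of two kinds: an \emph{exclusion box}, on which some component of $\mathcal S$ has a strict sign, or a \emph{uniqueness box}, on which an interval Newton/Krawczyk test proves exactly one zero. The regular pentagon $(2/5,2/3)$ is a known exact zero. It needs its own uniqueness box, or else an exact evaluation combined with a local injectivity certificate.

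\textbf{Step 1: enclosures of $S_1,S_2$ on a box.} For a rational box $[u_0,u_1]\times[v_0,v_1]$ I would compute interval enclosures of $a,b,c$ from the chart \eqref{n5:eq:chart}, using a Machin enclosure of $\pi$. Each force argument $a$, $a+b$, $c$, $a+c$, $b$, $2b$ then lies in an interval, and $f$ is bounded there by the rational sine/cosine Horner polynomials with Taylor remainders and outward rounding, as in Proposition~\ref{n5:prop:vertices}. The pairings $f(c)$, $f(a+c)$ and $f(2b)$ can have arguments above $\pi$, so I would evaluate them through $f(2\pi-t)=-f(t)$ where needed. All arguments lie in $[\delta,2\pi-\delta]$, so the denominators $\sin(t/2)$ are bounded away from zero on the whole closed square. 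Where naive interval evaluation is too wide, I would use the centered form of Lemma~\ref{lem:centered}. It is adapted to two variables: derivative enclosures of $S_k$ in $(u,v)$ come from $g=f'$ via the chain rule through the chart. A box on which $0\notin S_1(\text{box})$ or $0\notin S_2(\text{box})$ contains no zero.

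\textbf{Step 2: existence and uniqueness near the three roots.} In each of the three small boxes I would compute an interval enclosure $J$ of the Jacobian $D\mathcal S$ and check that it is regular, meaning its determinant interval excludes zero. I would then apply the Krawczyk operator $K(X)=\hat x-Y\mathcal S(\hat x)+(I-YJ)(X-\hat x)$, with $Y$ a rational approximate inverse at the center $\hat x$, and verify $K(X)\subset\operatorname{int}X$. This yields exactly one zero in $X$. The two nontrivial boxes have the stated centers and half-widths of about $3\times10^{-3}$. For the pentagon box, exactness of the zero is known independently, and Krawczyk still gives uniqueness. The inequality $c>\pi$ at the second root, and all gaps $<\pi$ at the first, follow from interval evaluation of $c$ (and of $a,b$) on those boxes.

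\textbf{Step 3: coverage and conclusion.} The verifier reconstructs the 835-node tree from closed splits, checks reachability as in Proposition~\ref{prop:coverlogic}, and checks that every leaf is either an exclusion box or one of the three uniqueness boxes. The uniqueness boxes must be pairwise disjoint, and the zero inside each must not also lie in a neighboring leaf. Any overlap on shared closed boundaries is harmless, since the neighbors are exclusion boxes. This gives exactly three zeros in $[0,1]^2$. They are interior because each uniqueness box lies in $(0,1)^2$, which is consistent with Lemma~\ref{n5:lem:chart}. The boundary of the square, including the degenerate edge $u=1$ where $v$ is arbitrary, is covered by exclusion boxes.

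\textbf{Main obstacle.} The hardest step is obtaining sign exclusions near the three roots and along curves where one of $S_1,S_2$ vanishes. There the component $S_k$ that vanishes along the curve cannot be excluded, and the exclusion must come from the other component. If neither component separates near a root, the box must instead be promoted to a Krawczyk box, which requires the Jacobian to be well conditioned there. I would tune the subdivision first by floating-point exploration and then certify it rigorously. If the Jacobian at some root were near-singular, a plain Krawczyk test would fail, and a preconditioned or higher-order enclosure would be needed.
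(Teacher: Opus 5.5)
Your proposal is correct and follows essentially the paper's argument: a recorded 835-node binary subdivision of $[0,1]^2$ whose leaves are either excluded by interval expansions about a reference point or lie in one of three disjoint neighborhoods where the Krawczyk inclusion $K(X)\subset\operatorname{int}X$ certifies existence and uniqueness, with the gap conditions read off from enclosures of $a,b,c$ on those boxes. The differences are only standard variants: the paper also excludes boxes by the Krawczyk test $K(X)\cap X=\varnothing$, which handles the leaves near a root that you flagged as your main obstacle, and it certifies contraction by an induced-norm bound $\|I-C[D\mathcal S](X)\|<1$ rather than your determinant-regularity check, using MPFI interval arithmetic at 80 and 128 bits.
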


The decimal approximations identify the two solutions; the verification
uses the exact box endpoints recorded as binary rational numbers. We next
state the interval criterion used to prove existence, uniqueness, and
exclusion, so that the role of the computational data is explicit.

For a box $X\subset\R^2$, a point $x_0\in X$, and a nonsingular
rational matrix $C$ used to precondition the equations, define the
Krawczyk enclosure
\begin{equation}\label{n5:eq:krawczyk}
 K(X)=x_0-C[\mathcal S(x_0)]
       +(I-C[D\mathcal S](X))(X-x_0),
\end{equation}
where all intervals are outward enclosures. If a zero lies in $X$, the
integral mean-value equation places it in $K(X)$. Thus
$K(X)\cap X=\varnothing$ excludes zeros. If $K(X)\subset\operatorname{int}X$
and a certified induced-norm bound satisfies
$\|I-C[D\mathcal S](X)\|<1$, then
$x\mapsto x-C\mathcal S(x)$ maps $X$ into itself and is a contraction.
It has exactly one fixed point in $X$, and nonsingularity of $C$ makes
that point exactly a zero of $\mathcal S$.

\begin{proof}[Rigorous isolation of the zeros]
The archived subdivision record covers the entire square $[0,1]^2$ by
closed binary splits. Each final subregion is either excluded by an
interval expansion about a reference point or by \eqref{n5:eq:krawczyk},
or is contained in one of three neighborhoods separately proved to
contain a unique zero. Strict inclusion and contraction establish
existence and uniqueness in each neighborhood; approximate Newton
iterates serve only to locate candidate solutions. The independent
verification program evaluates the original Newtonian force expressions
and their derivatives instead of accepting stored function or Jacobian
values. If finer local bounds are needed, it may subdivide a final box
only after independently verifying all resulting subboxes. The classification verification uses MPFI interval arithmetic at
80 and 128 bits. At both precisions, the 835-node subdivision record is
accepted, with all excluded boxes and all three unique-zero neighborhoods
checked by the stated criteria.
\end{proof}

\subsection{Counting geometric classes}
\begin{theorem}[Five-satellite classification; computer-assisted]\label{thm:classification}
Modulo rotations, reversal, and cyclic relabeling of the identical
satellites, the five-satellite angular problem has exactly three classes:
the regular pentagon, one nonregular configuration with all gaps less
than $\pi$, and one configuration with a gap greater than $\pi$.
\end{theorem}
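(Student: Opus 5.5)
The plan is to reduce Theorem~\ref{thm:classification} to the two-variable count of Proposition~\ref{n5:prop:roots}, and then to check which zeros of $\mathcal S$ represent the same configuration. First, by Proposition~\ref{n5:prop:symmetry}, every collision-free five-satellite solution has a reflection axis. For five points, such an axis fixes exactly one satellite and bisects the opposite gap. A cyclic relabeling and, if needed, a reversal therefore put the gap vector in the palindromic form \eqref{n5:eq:palindrome}, $(a,b,b,a,c)$. The substitution computation shows that the full equations are then equivalent to $S_1=S_2=0$. Proposition~\ref{prop:collars} gives all gaps $>1/10>\delta$, so by Lemma~\ref{n5:lem:chart} each solution corresponds to an interior zero of $\mathcal S$. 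Proposition~\ref{n5:prop:roots} then shows that there are at most three classes.

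Next I will show that the three zeros are pairwise inequivalent under the dihedral action $D_5$ on cyclic gap vectors. The regular pentagon $(2/5,2/3)$ has all gaps equal to $2\pi/5$, and the other two do not. For the second zero, the approximate center gives $u\approx0.196$, and $c=\delta+\Lambda(1-u)(1-v)\approx(0.1+6.28\cdot0.80\cdot0.80)>\pi$. For the first zero, all gaps are $<\pi$, as stated in Proposition~\ref{n5:prop:roots}. The multiset of gaps is a $D_5$-invariant, and the property "some gap exceeds $\pi$" is invariant as well. Hence the three classes are distinct. The computed box enclosures make these inequalities rigorous: the half-widths $3\times10^{-3}$ are small compared with the margins, though this should be confirmed with interval bounds on $a,b,c$.

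The remaining issue, and the main obstacle, is that distinct zeros might represent the same class, or one zero might hide several classes. The latter cannot happen, since a zero determines the gap vector. The former is excluded above because the three zeros are inequivalent. One subtlety needs care: a symmetric configuration with more than one axis could appear as several palindromic representatives, and so as several points of the chart. That would not increase the class count, since we only claim at most three and exhibit three distinct ones. The other direction also needs care: each zero is a genuine solution, because $E=(S_1,S_2,0,-S_2,-S_1)$ vanishes and the gaps are positive, so each zero is collision-free. Finally, the nonregular zeros are non-collinear collision-free configurations. Together these give exactly three classes.
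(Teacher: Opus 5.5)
Your proof is correct and follows the paper's route: symmetry gives the palindromic form, the gap bound and Lemma~\ref{n5:lem:chart} reduce every solution to one of the three verified zeros of $\mathcal S$, and $D_5$-invariant gap properties separate the three resulting classes. Your remark that extra palindromic representatives of a multi-axis configuration could only lower the count is valid, so the paper's ``at most one reflection axis'' paragraph, which it includes only for completeness, is not needed. One small numerical slip: $\Lambda=2\pi-5\delta\approx5.78$, not $6.28$; with the correct value you still get $c\approx3.8>\pi$, and the paper makes this rigorous using $\Lambda>11/2$ on the box $0.19<u<0.20$, $0.19<v<0.21$.
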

\begin{proof}
Proposition~\ref{n5:prop:symmetry} places every solution in
\eqref{n5:eq:palindrome}. The explicit gap bound and Lemma~\ref{n5:lem:chart}
place it in the interior chart, where the equations are exactly
$\mathcal S=0$. Proposition~\ref{n5:prop:roots} gives three possibilities.

The regular solution is identified exactly. The first nonregular root
lies in the coarse box $0.27<u<0.28$, $0.82<v<0.84$. Substitution
in \eqref{n5:eq:chart} gives $a<0.98$, $b<2.74$, and $c<1$, so
all its gaps are less than $\pi$. The second lies in
$0.19<u<0.20$, $0.19<v<0.21$. Using $\pi>3$ and $\delta<1/10$
in \eqref{n5:eq:chart} gives
$c>869/250>22/7>\pi$. Thus these classes are geometrically distinct.

For completeness, a nonregular five-point set has at most one reflection
axis. Two distinct reflections would generate a nontrivial rotation
preserving the set. Every orbit of that rotation on the circle has the
same length, an integer greater than one dividing five. It must be five,
which makes the set regular. Hence selecting the unique axis of either
nonregular solution yields a unique triple $(a,b,c)$ in the chosen
palindromic convention, up to the stated dihedral operations. No
additional class is created by relabeling the same root.
\end{proof}

The classification is logically downstream of five-satellite symmetry.
It gives no count, nondegeneracy, or stability information for $N=6,7,8$.
Its separate existence and uniqueness checks are therefore distinct from
the matrix and force inequalities used to prove the main theorem.

\section{Completion of the proof and computational verification}\label{sec:release}
\begin{proof}[Proof of Theorem~\ref{thm:main}]
Propositions~\ref{n5:prop:symmetry}, \ref{n6:prop:symmetry},
\ref{n7:prop:symmetry}, and~\ref{n8:prop:symmetry} cover exactly the
four satellite numbers in the statement. Each treats collision-free
angular solutions without an a priori symmetry constraint. The
case-specific representative choices use only relabeling and reversal,
which preserve the existence of a reflection. This proves the theorem using
the finite verification results stated explicitly in the preceding sections.
\end{proof}
\begin{proof}[Proof of Corollary~\ref{cor:limits}]
Lemma~\ref{lem:limit} makes every non-coalescent finite-mass limit a
collision-free angular solution on the unit circle. The main theorem
then supplies its reflection axis. Translating and scaling back give
a line through the dominant body in the original normalization.
\end{proof}

\subsection{Computational verification and reproducibility}\label{sec:reproducibility}
The finite verification has two parts: a mathematical criterion valid on a
whole closed region, and explicit data describing a complete subdivision of
the verification domain. The programs reconstruct each region and recompute
its inequalities. Programs used to find subdivisions, quadratic directions,
or force weights do not replace these checks.

For five-satellite symmetry, the program
\texttt{n5\_chamber\_exact\_referee.py} enumerates the stated vertices and
checks 108 positive inequalities and one exact identity at 160 and 224 bits.
For six satellites, exact rational chord bounds verify the positive quadratic
directions on all 6,243 final subregions of the 12,485-node subdivision.
For seven satellites, the input record
\texttt{n7\_cover\_v2.tree} describes 250,907 nodes and 125,454 final
subregions; its matrix and original-force criteria are those of
Section~\ref{sec:certificates}. The eight-satellite computation uses integer
intervals with denominator $2^{176}$ and checks all 82,697 nodes, including
the vertex inequalities required for criterion TN. The separate
five-satellite classification uses an 835-node subdivision and MPFI interval
arithmetic at 80 and 128 bits to exclude all other zeros and to prove
existence and uniqueness in the three stated neighborhoods.

The eight-satellite input archives are identified by the following SHA-256
checksums. The complete-cover archive has checksum
\begin{quote}\small\raggedright
\texttt{44f5e5fdf039dcbfe4f9c6d7514d4ed7}\allowbreak\texttt{b28afe26c336f5cd73caa2806912c4e7}.
\end{quote}
The archive containing the final analytic argument and verification data has
checksum
\begin{quote}\small\raggedright
\texttt{0e19e86e0b078de4e5ac18c9e6f9947f}\allowbreak\texttt{4fa781ad4560b86095c175fda729a142}.
\end{quote}
The final subdivision record is identified in
Proposition~\ref{n8:prop:cover}. Checksums identify the input files; the proof
depends on the stated mathematical criteria and the verification of their
inequalities, rather than on a checksum or a numerical approximation alone.

% DATA AVAILABILITY: insert the public repository URL or archival DOI once
% supplied and confirmed by the authors. No availability location is inferred.
% AUTHOR VERIFICATION: only state that the authors independently reran a
% verifier after they confirm the actual program, input, precision and result.
\section*{Acknowledgments}
The manuscript was prepared with AI assistance and preliminary automated verification. The analytical proofs and formula derivations presented in this paper have been manually verified by the authors. Rigorous computer-assisted verification has been completed over the entire prescribed domain. The authors' manual review of the programs and results is ongoing. A revised version with clearer explanations and improved readability
will be made available in the future.


\begin{thebibliography}{9}
\small\setlength{\itemsep}{3pt}
\bibitem{ACS} A. Albouy, H. E. Cabral, and A. A. Santos,
\emph{Some problems on the classical $n$-body problem},
Celestial Mech. Dynam. Astronom. \textbf{113} (2012), 369--375.
DOI: 10.1007/s10569-012-9431-1. arXiv:1305.3191.
\bibitem{AF} A. Albouy and Y. Fu,
\emph{Relative equilibria of four identical satellites},
Proc. R. Soc. A \textbf{465} (2009), 2633--2645.
DOI: 10.1098/rspa.2009.0115. arXiv:0902.3215.
\bibitem{CLO} J. M. Cors, J. Llibre, and M. Oll\'e,
\emph{Central configurations of the planar coorbital satellite problem},
Celestial Mech. Dynam. Astronom. \textbf{89} (2004), 319--342.
DOI: 10.1023/B:CELE.0000043569.25307.ab.
\bibitem{DHW} Y. Deng, M. Hampton, and Z. Wang,
\emph{Symmetry and asymmetry in the $1+N$ coorbital problem},
SIAM J. Appl. Dyn. Syst. \textbf{21} (2022), 2080--2095.
DOI: 10.1137/22M1475053. arXiv:2203.00163. 
\bibitem{DX} Y. Deng and J. Xu,
\emph{The symmetry and linear stability of convex $1+5$ coorbital central configurations with homogeneous potential},
arXiv:2608.19958, 2026. 
\bibitem{Moore} R. E. Moore, R. B. Kearfott, and M. J. Cloud,
\emph{Introduction to Interval Analysis}, SIAM, Philadelphia, 2009.
\end{thebibliography}
\end{document}